\documentclass[final]{siamltex}

%%%%%%%%%%%%%%%%%%%%%%%% PACKAGES %%%%%%%%%%%%%%%%%%%%%%%
\usepackage{amsmath,amssymb,float,graphicx,multirow,subfig,bigstrut}%,timestamp,bigstrut}
\usepackage[notref, notcite, color]{showkeys}
\usepackage{multirow,cancel,soul}
%%%%%%%%%%%%%%%%%%%%%%%% DEFINITIONS %%%%%%%%%%%%%%%%%%%%%%
\def\Xint#1{\mathchoice
{\XXint\displaystyle\textstyle{#1}}%
{\XXint\textstyle\scriptstyle{#1}}%
{\XXint\scriptstyle\scriptscriptstyle{#1}}%
{\XXint\scriptscriptstyle\scriptscriptstyle{#1}}%
\!\int}
\def\XXint#1#2#3{{\setbox0=\hbox{$#1{#2#3}{\int}$ }
\vcenter{\hbox{$#2#3$ }}\kern-.6\wd0}}

\def\dashint{\Xint-}

\def\bsymbol#1{\mbox{\boldmath$\displaystyle#1$\unboldmath}}

\newcommand{\Mjj}{{M_{j}}}
\newcommand{\ud}{\ \mathrm{d}}
\newcommand{\bx}{\mathbf{x}}
\newcommand{\bu}{\mathbf{u}}
\newcommand{\bv}{\mathbf{v}}
\newcommand{\bm}{{\bf Q}}

\newcommand{\bG}{{\cal G}}
\newcommand{\bs}{\boldsymbol}
\newcommand{\bfmu}{{\bsymbol{\mu}}}
\newcommand{\bfchi}{{\bsymbol{\chi}}}
\newcommand{\bfvarphi}{{\bsymbol{\chi}}}
\newcommand{\bflambda}{{\bsymbol{\lambda}}}

\newtheorem{remark}{Remark}[section]
\newcommand{\pdiff}[2]{\frac{\partial {#1}}{\partial {#2}}}
\renewcommand{\epsilon}{\varepsilon}
\newcommand{\eps}{\varepsilon}

\newcommand{\xref}[1]{(\ref{#1})}
%%%%%%%%%%%%%%%%%%%%%%%% PREAMBLE %%%%%%%%%%%%%%%%%%%%%%%

\title{Preconditioning nonlocal multi-phase flow}
\author{\textsc{David Kay\footnotemark[1] \and Vanessa Styles\footnotemark[2]}}

\begin{document}
\maketitle

%\hfill TIME STAMP \enskip (\timestamp)

%%%%%%%%%%%%%%%%%%%%%%%% ABSTRACT %%%%%%%%%%%%%%%%%%%%%%%

\begin{abstract}
We propose an efficient solver for saddle point problems arising from finite element approximations of nonlocal multi-phase Allen--Cahn
variational inequalities. The solver is seen to behave mesh independently and to have only a very mild dependence on the number of phase field variables. 
In addition we prove convergence, in three GMRES iterations, of the approximation of the two phase problem, regardless of mesh size or interfacial width. 
Numerical results are presented that illustrate the competitiveness of this approach.
\end{abstract}

\begin{keywords}
Allen--Cahn systems; nonlocal constraints; PDE-constrained optimization; primal--dual active set method;  saddle point systems; preconditioning; Krylov subspace solver
%semi-smooth Newton method;
\end{keywords}

\begin{AMS}
35K55, 65F08, 65K10, 90C33, 82C24, 65M60
\end{AMS}

\let\oldthefootnote\thefootnote
\renewcommand{\thefootnote}{\fnsymbol{footnote}}
\footnotetext[1]{Department of Computer Science, University of Oxford,  Oxford, OX1 3QD,  UK \\ david.kay@cs.ox.ac.uk}
%Wolfson Building, Parks Road, Oxford, OX1 3QD, UK, (david.kay@cs.ox.ac.uk), phone: +44 1865 610814, fax: +44 1865 273839}
\footnotetext[2]{Department of Mathematics, University of Sussex, Brighton, BN1 9RF, UK \\ v.styles@sussex.ac.uk}
\let\thefootnote\oldthefootnote

\pagestyle{myheadings}
\thispagestyle{plain}
\markboth{D.~KAY and V. STYLES}{Preconditioning nonlocal multi-phase flow}

\section{Introduction}
\label{s:intro}
The aim of this paper is to combine preconditioning methods for indefinite problems and multigrid preconditioning developed for elliptic
systems to provide an efficient preconditioner for the solution of 
%the following system of nonlocal multiphase Allen--Cahn variational inequalities:
systems of multiphase Allen--Cahn variational inequalities of the form: %following two forms:

%${\cal P}_V$: For given $\bu(\cdot,0) = \bu_0 \in {\cal G}_{\bm}$ find $\bu \in L^2 \left(0,T;{\cal G}_{\bm}\right) \cap H^1 \left(0,T; \left( L^2 (\Omega) \right)^N \right)$ such that 
%${\cal P}$: For given $\bu(\cdot,0) = \bu_0 \in {\cal G}$, find $\bu \in L^2(0,T;{\cal G}) \cap H^1(0,T; \mathbf{L^2} (\Omega))$ such that
%$$
%\epsilon \left(\pdiff{\bu}{t}, \bfchi - \bu \right) + \epsilon \left( \nabla \bu, \nabla \left( \bfchi-\bu \right) \right) - \frac{1}{\epsilon} \left (A \bu, \bfchi - \bu \right) \geq 0,~~~\forall~\bfchi \in {\cal G}.
%$$

${\cal P}_\bm$: For given $\bu(\cdot,0) = \bu_0 \in {\cal G}_{\bm}$, find $\bu \in L^2 (0,T;{\cal G}_{\bm}) \cap H^1(0,T; \mathbf{L}^2 (\Omega))$ such that 
$$%\begin{equation}\label{variation}
\epsilon \left(\pdiff{\bu}{t}, \bfchi - \bu \right) + \epsilon \left( \nabla \bu, \nabla \left( \bfchi-\bu \right) \right) - \frac{1}{\epsilon} \left (\mathbf{A} \bu, \bfchi - \bu \right) \geq 0, \forall~\bfchi \in {\cal G}_\bm.
$$%\end{equation}
%for almost all $t \in (0,T)$ and all $\bfchi \in {\cal G}_{\bm}$.

Here $\Omega \in \mathbb{R}^d$, $d=1,2$ or $3$ and $\bu:\Omega \times (0,T) \to\mathbb{R}^N$  denotes the vector-valued phase field function 
which describes the fractions of the $N$ phases, i.e. each component of $\bu$ describes one phase, $A \in \mathbb{R}^{N \times N}$ is a symmetric constant matrix that has at least one positive eigenvalue, 
%$$
%{\cal G}_\bm:= \{\mathbf{v} \in {\cal G}\mid \dashint_\Omega \bv = \bm\}\quad\mbox{with}\quad{\cal G}:= \{\bsymbol{\xi} \in 
%\mathbb{R}^N\mid \bsymbol{\xi} \geq \bsymbol{0}, \bsymbol{\xi} \cdot \mathbf{1} = 1 \}.
%$$
%Since each component of $\bu$ stands for the fraction of one phase,
%the phase space for the order parameter $\bu$ is the Gibbs simplex
$$
\mathcal{G}_\bm :=\{\mathbf{v}\in \mathbf{H}^1(\Omega)\mid \mathbf{v}\in G_\bm ~~ a.e.\} 
~~\mbox{with}~~
%{\cal G}:= \{\bsymbol{\xi} \in \mathbb{R}^N\mid \bsymbol{\xi} \geq \bsymbol{0}, \bsymbol{\xi} \cdot \mathbf{1} = 1 \}\quad\mbox{and}\quad {\cal G}_\bm:= \{\mathbf{v} \in {\cal G}\mid \mbox{$\dashint_\Omega \bv d\bx$}= \bm\}. 
G_\bm:= \{\bsymbol{\xi} \in \mathbb{R}^N\mid \bsymbol{\xi} \geq \bsymbol{0}, \bsymbol{\xi} \cdot \mathbf{1} = 1, \mbox{$\dashint_\Omega \bv \ud\bx$}= \bm\}. 
$$
We denote by $\mathbf{L}^2(\Omega)$
and $\mathbf{H}^1(\Omega)$ the spaces of vector-valued functions, 
$(.,.)$ is the standard $L^2$ inner product for scalar functions, $(\mathbf{v}, \mathbf{w}) =
\sum \limits_{i=1}^N (v_i, w_i)$ for $\mathbf{v}, \mathbf{w} \in
\mathbf{L}^2(\Omega)$, $(\mathbf{A},\mathbf{B}) =
\sum\limits^N_{i=1} \sum\limits^d_{j=1}(a_{ij},b_{ij})$ for
matrix-valued functions, $\bsymbol{\xi} \geq \mathbf{0}$ means $\xi_i \geq 0$ for all $i \in \{1,...,N\}$, $\mathbf{1} =
(1,...,1)^T$, $\bsymbol{\xi} \cdot \mathbf{1} = \sum \limits_{i=1}^N \xi_i$ and ${\dashint_\Omega f(x)\ud\bx  :=\tfrac{1}{|\Omega|} \int_\Omega f(x)\ud\bx}$.

The system ${\cal P}_\bm$  arises from steepest descent dynamics with respect to the $L^2$-norm of  the Ginzburg-Landau energy, 
$$ 
E(\bu) := \int_{\Omega} \left( \frac{\epsilon}{2} \left| \nabla \bu \right|^2 + \frac{1}{\epsilon} \Psi(\bu) \right)\! \ud \bx,
$$
under the constraint $\dashint_\Omega \bu \ud \bx = \bm$.
Here $\Psi$ is the multi-obstacle potential 
$$ \Psi(\theta) :=% \Psi_0(\theta) 
-\frac{1}{2} \theta^T \mathbf{A} \theta+ I_\bG =  \left\{ \begin{array}{ll} 
-\frac{1}{2} \theta^T \mathbf{A} \theta & \mbox{ for } \theta \in \bG \\
\infty & \mbox{ otherwise}
\end{array}\right.
$$
with $I_\bG$ denoting the indicator function for the Gibbs Simplex, 
$G:= \{\bsymbol{\xi} \in \mathbb{R}^N\mid \bsymbol{\xi} \geq \bsymbol{0}, \bsymbol{\xi} \cdot \mathbf{1} = 1 \}$, and the symmetric constant matrix $A$ has at least one positive eigenvalue to allow for minima of $\Psi$ to exist, see \cite{ElliottLuckhaus}. 
%The system ${\cal P}_\bm$ arises from steepest descent dynamics with respect to the $L^2$-norm of  $E(\bu)$, under the constraint $\dashint_\Omega \bu \,dx = \bm$. 

\vspace{2mm}

{
\begin{remark}
Steepest descent dynamics with respect to the $L^2$-norm of the Ginzburg-Landau energy $E(\bu)$, without the constraint $\dashint_\Omega \bu \ud \bx = \bm$,  yields the system:\\ 
${\cal P}$: For given $\bu(\cdot,0) = \bu_0 \in {\cal G}$, find $\bu \in L^2(0,T;{\cal G}) \cap H^1(0,T; \mathbf{L}^2 (\Omega))$ such that
$$
\epsilon \left(\pdiff{\bu}{t}, \bfchi - \bu \right) + \epsilon \left( \nabla \bu, \nabla \left( \bfchi-\bu \right) \right) - \frac{1}{\epsilon} \left (\mathbf{A} \bu, \bfchi - \bu \right) \geq 0,~~~\forall~\bfchi \in {\cal G}
$$
where $\mathcal{G} :=\{\mathbf{v}\in \mathbf{H}^1(\Omega)\mid \mathbf{v}\in G ~~ a.e.\}$.
Since ${\cal P}$ is a simplified version of ${\cal P}_{\bm}$ the solver we propose in this paper can be applied to the corresponding finite element approximation of ${\cal P}$. 
\end{remark}
}

\vspace{2mm}

${\cal P}$ is a generalisation of the scalar Allen--Cahn equation that was introduced by Allen and Cahn \cite{AllenCahn1979} to describe the capillarity driven
evolution of an interface separating two bulk phases. The parameter $\eps$, with $0<\eps \ll 1$, is associated with the thickness of the diffuse interfacial layer in which 
the phase field variables rapidly change their value. The $N$ phase extension of the scalar Allen--Cahn model was introduced in \cite{BR, GNS1999}. 
The nonlocal problem ${\cal P}_\bm$ models interface evolution with mass conservation.

Multiphase Allen--Cahn models have a variety of applications, 
including image segmentation, see for example \cite{TK2009}, 
and identification of coefficients in elliptic equations \cite{DES}.  
Applications arising from identification of coefficients in elliptic equations include electric impedance tomography, \cite{DorVil08, IglMcL11, IglLinStu14}, and 
flow in porous media with unknown permeabilities \cite{CheIsaNew99, DorMilRap00, BoyAdlLio12}. 
Applications of mass conserving multiphase Allen--Cahn models include structural topology optimisation \cite{WZ,BFGS}.

%In \cite{MCSV} a nonlocal two phase Allen--Cahn model (written in the form of a scalar equation) is studied in the context of cell motility, this results in a PDE constrained optimisation problem, in which the nonlocal Allen--Cahn equation optimal control of a 

%only a few extensions to a model with several phases are available [3,8,23,35,39]. 

Efficient and reliable, i.e., fast and globally converging, multigrid methods for solving implicit in time finite element approximations of ${\cal P}$ are presented in \cite{KK2,KK2006}, while an explicit in time finite element approximation of ${\cal P}_\bm$ was introduced in \cite{GNSW}. In \cite{BlankGarkeSarbuStyles2013} (semi-)implicit in time finite element approximations of ${\cal P}_\bm$ are considered in which a primal-dual active set method, see \cite{BIK,HIK}, is used to solve the finite element approximations. 
By using Krylov-subspace solvers and suitable preconditioners the authors in \cite{blanksarbustoll2012} develop 
efficient, mesh independent, solvers for the (semi-)implicit approximations of ${\cal P}$ and ${\cal P}_\bm$ that were derived in \cite{BlankGarkeSarbuStyles2013}. In this work we introduce an alternative preconditioner to the ones in \cite{blanksarbustoll2012} resulting in a solver that is not only mesh independent, but also is only mildly dependent of the number of phases $N$. 

%Efficient and reliable, i.e., fast and globally converging, solvers are available for discretized Allen--Cahn equations with vector valued order parameters both for logarithmic and obstacle-type free energy.  

We note that in \cite{GKCH} globally convergent nonsmooth Schur--Newton methods are introduced for the solution of discrete multicomponent Cahn--Hilliard systems with logarithmic and obstacle potentials. These methods could also be used to solve the multicomponent Allen--Cahn systems ${\cal P}$ and ${\cal P}_\bm$.

%% \cite{murphy2000note,kay2002preconditioner,Elman99}}\\

{When using iterative techniques to solve the linear system that arises when the primal-dual active set method is used to solve a finite element approximation of ${\cal P}_\bm$, 
the majority of the work to be undertaken within each iteration is in the solving of the linear systems $K \bx= \bs b$} 
%Within each non-linear iteration  the majority of the work to be undertaken is in the solving of the linear systems $K \bx= \bs b$
$$ 
K=\left[\begin{array}{cc}
{\cal K} & B^T\\
B&0
\end{array}\right],
$$ 
where ${\cal K}$ is symmetric positive definite. Similar saddle point structures are common place within fluid dynamics, leading to much development of numerical solvers for Navier-Stokes equations, e.g. see \cite{murphy2000note,kay2002preconditioner,ElmanSilvesterwathenbook}. In these papers, it is the choice of preconditioning matrix, $P$, that leads to improved  convergence of the chosen iterative Krylov subspace scheme, e.g. see \cite{saad1996iterative}.

The preconditioning of the linear systems arising from ${\cal P}$ and ${\cal P}_\bm$ %for this class of Allen--Cahn problems 
was initially considered in 
\cite{blanksarbustoll2012}, where a preconditioning technique building upon Stokes type systems is proposed. In this work we will use a similar structural approach to that of \cite{blanksarbustoll2012}, however, we provide an improved approximation to the Schur complement, $B{\cal K}^{-1}B^T$, leading to a solver that is almost independent of the number of phases. Moreover, this improvement does not lead to any significant increase in computational effort per iteration, ultimately leading to a more effective solver. In addition, when considering the two phase problem, with $N=2$, the minimal polynomial of the resulting preconditioned system is of degree three and hence GMRES will converge within three iterations, see Theorem \ref{theorem}.

The paper is organised as follows. In Section \ref{s:fea} we reformulate %${\cal P}$ and 
${\cal P}_\bm$ with the help of Lagrange multipliers, yielding the associated system %${\cal R}$ and 
${\cal R}_\bm$. We then introduce a finite element approximation of an implicit Euler-discretisation of ${\cal R}_\bm$ and we apply a primal-dual active set algorithm to this discretisation. In Section \ref{s:precon} a preconditioner for the primal-dual active set algorithm is developed and the implementation of the numerical solver is presented.
% in Section \ref{s:solver}. 
In Section \ref{s:results} we present numerical computations that illustrate the effectiveness of our approach, in particular they show how the iteration number is independent of the mesh size and only mildly dependent on the number of phases $N$.

\section{Alternative Formulation and Finite Element Discretisation}\label{s:fea}
In this section we follow the authors in \cite{BlankGarkeSarbuStyles2013} in applying a primal-dual active set
method, \cite{BIK,HIK}, to a finite element approximation of ${\cal P}_\bm$, this method is well known in the context of optimisation with partial differential
equations as constraints.  To this end we first reformulate  ${\cal P}_\bm$ with the help of Lagrange multipliers, yielding the associated system 
${\cal R}_\bm$,  then we apply a Primal Dual Active Set algorithm to a finite element approximation of ${\cal R}_\bm$.

\subsection{Alternative Formulation of ${\cal P}_\bm$}
In \cite{BlankGarkeSarbuStyles2013} the following alternative formulation of 
${\cal P}_\bm$ is presented:

${\cal R}_\bm$: Let $\Omega \subset \mathbb{R}^d$ be a bounded domain which is 
 either convex or fulfills $\partial  \Omega \in C^{1,1}$.  For given $\bu(\cdot,0) = \bu_0 \in {\cal G}$, find  $\bu \in L^2 \left(0,T; \mathbf{H}^2 (\Omega) \right) \cap H^1(0,T; \mathbf{L}^2 (\Omega))$, $\bfmu \in L^2(0,T; \mathbf{L}^2 (\Omega)) $, $\bflambda \in L^2(0,T;S)$ and $\Lambda \in  L^2(0,T; L^2(\Omega)) $ such that 
\begin{eqnarray*}\label{eq: lagrange system}
	\epsilon\pdiff{\bu}{t} - \epsilon \Delta \bu - \frac{1}{\epsilon}\mathbf{A} \bu - \frac{1}{\epsilon} \bfmu- \frac{1}{\epsilon} \Lambda {\bf 1} -\frac{1}{\epsilon} \bflambda &= &{\bf 0}  \mbox{ a.e. in } \Omega \times (0,T), \\
 	\bu \cdot {\bf 1} & = & 1 \mbox{ a.e. in } \Omega \times (0,T), \\
	 (\bu, 1) = \bm, ~ (\bfmu,\bu) &=& 0, \mbox{ for almost all } t \in (0,T), \\
 	\bu \geq \bs 0,~\bfmu & \geq & \bs 0, \mbox{ a.e. in } \Omega \times (0,T).
	%\bu(x,0)&=&\bu_0(x)\mbox{ a.e. in } \Omega\\
	%\frac{\partial \bu}{\partial \nu}&=&0\mbox{ a.e. in } \mbox{ a.e. in } \partial Omega\mbox{ for almost all } t \in (0,T).\\
 \end{eqnarray*}
%together with the appropriate initial and boundary conditions.  
Here the Lagrange multipliers $\bfmu, \Lambda$ and $\bflambda$ are such that 
\begin{itemize}
\item[(i)] $\bfmu(\bx, t): \Omega \times (0,T) \rightarrow \mathbb{R}^N$, is used to impose the 
%non-negativity 
constraint $\bu \geq \bs 0$,

\item[(ii)]  $\bflambda(t):  \mathbb{R}^N \times (0,T) \rightarrow S$, is used to impose the mass constraint $\dashint_{\Omega} \bu ~ d \bx = \bm$,

\item[(iii)] $\Lambda(\bx,t): \Omega \times (0,T) \rightarrow \mathbb{R}$, is used to impose the saturation constraint $\bu \cdot {\bf 1} = 1$,
\end{itemize}
and
$S := \left\{ \bv \in \mathbb{R}^N: \bv \cdot {\bf 1} = 0 \right\}\!$.
 
%${\cal R}$ and 
${\cal R}_{\bm}$ %are both 
is complemented with the %initial condition $\bu(x,0)=\bu_0(x)$ and 
the boundary condition $\frac{\partial \bu}{\partial \nu}=0$, were $\nu$ is the outer unit normal to $\partial\Omega$. 

\begin{remark} 
The scaling $\frac1{\varepsilon}\bfmu$ in ${\cal R}_{\bm}$ is introduced in order that $\bfmu$ is of order one, if we were to replace $\frac1{\varepsilon}\bfmu$ by 
$\bfmu$ we would observe a severe $\varepsilon$-dependence of $\bfmu$ which in practice often results in
oscillations in the discretised primal-dual active set method.  
%\begin{enumerate}
%\item The scaling $\frac1{\varepsilon}\bfmu$ is introduced in order that $\bfmu$ is of order one, if we were to replace $\frac1{\varepsilon}\bfmu$ by 
%$\bfmu$ we would observe a severe $\varepsilon$-dependence of $\bfmu$ which in practice often results in
%oscillations in the discretised primal-dual active set method.  
%can be interpreted as a scaled
%Lagrange multiplier for the pointwise box-constraints. This scaling guarantees that ?? is of order one. If we replace 1
%???? by ?? in (1.7) we would observe a severe ??-dependence of ??.
%The scaling of the Lagrange
%multiplier $\bfmu$ with respect to $\varepsilon$ is motivated by formal asymptotic expansions for
%obstacle potentials, see Blowey and Elliott [10] and Barrett, Garcke and N?urnberg
%[2]. Our scaling guarantees that ?? is of order one. If we replace 1
%???? by ?? in (1.7)
%we would observe a severe ??-dependence of ??.
\end{remark}

\subsection{Finite Element Discretisation}
For simplicity we assume that $\Omega$ is a polyhedral domain. Let $\mathcal{T}_h$ be a 
regular triangulation of $\Omega$ into disjoint open simplices,
i.e.\ in particular $\overline{\Omega} = \cup_{T\in \mathcal{T}_h} \overline{T}$.
Furthermore, we define $h:=\max_{T\in \mathcal{T}_h}\{\mbox{diam} ~T\}$ the maximal element size of $\mathcal{T}_h$
and we set $\mathcal{J}$ to be the set of nodes of 
$\mathcal{T}_h$ and $\{\mathbf{p}_j\}_{j\in \mathcal{J}}$ 
to be the coordinates of these nodes. 
Associated with $\mathcal{T}_h$ is the piecewise linear finite element space
$$
S_h:=  \Big\{\varphi\in C^{0}(\overline\Omega)
\Big| \,  \varphi_{\big|_{T}}\in P_{1}(T) \,\,\,\forall ~T\in \mathcal{T}_h \Big\}\subset H^1(\Omega),
$$
where we denote by $P_{1}(T)$ the set of all affine linear functions on $T$. 
Furthermore we denote the standard nodal 
basis functions of $S_h$ by $\{\chi_j\}_{j\in\mathcal{J}}$ %for all $j\in \mathcal{J}$ 
and we set $\mathbf{S}_h=(S_h)^N$.

The time domain $(0,T)$ is divided into $N_T$ uniform intervals $(t_{n-1}, t_{n})$, with $\tau := t_n-t_{n-1}$, $n=1,2,\ldots, N_T$.  
For simplicity of presentation we denote by $\bu_h\in \mathbf{S}_h$ the discrete solution at time $t_n$, while the solution at the previous time step will be denoted by $\bu_h^{n-1}\in \mathbf{S}_h$, and similarly for $\bfmu_h$,  $\bflambda_h$ and $\Lambda_h$.

We consider the following fully discrete approximation of ${\cal R}_\bm$:

${\cal R}_{\bm}^{h,\tau}$
{\it Given $\bu_h^{n-1}\in \mathbf{S}_h$, find $\bu_h\in \mathbf{S}_h$, $\bfmu_h\in \mathbf{S}_h$, $\bflambda_h \in\mathbb{R}^N$ and
  $\Lambda_h \in S_h$ such that
\begin{align}
&
\hspace*{-3mm}
\begin{array}{r}
~~~~~~~~~\tfrac{\eps^2}{\tau} (\bu_h ,\bfvarphi)_h %-  (\mathbf{A}_I\bu_h, \bfvarphi)_h
+ \eps^2 (\nabla \bu_h ,\nabla\bfvarphi) 
- (\bfmu_h,\bfvarphi )_h - (\bflambda_h,\bfvarphi) - (\Lambda_h \mathbf{1},\bfvarphi)_h ~~~~~~~~~~\\
 ~~~~~~~~~
= \tfrac{\eps^2}{\tau} (\bu_h^{n-1},\bfvarphi)_h +  (\mathbf{A}\bu_h^{n-1}, \bfvarphi)_h %+  (\mathbf{A}_E\bu_h^{n-1}, \bfvarphi)_h
~~\forall\bfvarphi\in \mathbf{S}_h,
\end{array} \label{eq2a}
\\
&\hspace*{2cm}
\sum_{i=1}^N (u_i)_j = 1 ~~~ \forall \; j \in \mathcal{J},
 \label{eq2b1}\\
&\hspace*{2cm}
\sum \limits_{j \in \mathcal{J}} \Mjj((u_i)_j-(u_N)_j) = 
\sum \limits_{j \in \mathcal{J}} \Mjj (m_i - m_N)\quad\mbox{for}\quad i
\in \{1,...,N-1\},
%\mathbf{P}_S(\sum_{j\in \mathcal{J}} M_j{\bu}_j -  \sum_{j\in \mathcal{J}} M_j \bfm) = \mathbf{0},
\label{eq2b}\\
&\hspace*{2cm}
%\sum_{i=1}^N\lambda_i=0,
\lambda_N = -\sum^{N-1}_{i=1}\lambda_i,\label{eq2lam}\\
& \hspace*{2cm}
\bfmu_j\geq \mathbf{0},~{\bu}_j\geq \mathbf{0}~~~\forall~j\in\mathcal{J},
 ~~~~
 ({\bu}_h, \bfmu_h)_h =0.
\label{eq:cons2a}
\end{align}
}
Here $(u_i)_j$ denotes the $i$-th component $u_i$ of
$\bu$ at the $j$-th node, $(f,g)_h = \int_\Omega I_h(f g )$ denotes the lumped mass semi-inner product where 
$I_h : C^{0}(\overline\Omega) \to S_h$ is the standard interpolation
operator such that $(I_h\,f)(\mathbf{p}_j)=f(\mathbf{p}_j)$ for all nodes $j\in \mathcal{J}$ and $\Mjj:=(\chi_j,\chi_j)_h$, $j\in\mathcal{J}$.\\

\begin{remark} 
Due to the $\mathbf{A} \bu$ term in ${\cal P}_{\bm}$, the problem is non-convex, in the above discretisation we have chosen to treat this term fully explicitly.  
%as then the system to be solved is unconditionally unique at every time step, see \cite{}. 
Alternative choices would be to treat this term fully implicitly, or in a semi--implicit manner, neither of which would affect the performance of our proposed solver. % \textcolor{red}{since ...}
%neither of which would not affect the performance of our proposed solver. 
%\textcolor{red}{both of which In numerical tests either of which would not affect the performance of our proposed solver. }
%Hence, we will write $\mathbf{A}=\mathbf{A}_E+\mathbf{A}_I$, where at each time step, $n$, $\mathbf{A}_E$ will be applied to explicit term $\bu_h^{n-1}$, and $\mathbf{A}_I$ will be applied to the implicit term $\bu_h$. 
%In the case $\mathbf{A}=\mathbf{A}_I$ the uniqueness of the problem is only guaranteed if $\tau < C \epsilon^2$, see \cite{}. 
%Whilst in the fully explicit case the system to be solved is unconditionally unique at every time step, see \cite{}. 
%\end{enumerate}
\end{remark}

\subsection{The Primal Dual Active Set Method}

\vspace{2mm}

We use the nonlinear primal dual active set (PDAS) algorithm derived in \cite{BlankGarkeSarbuStyles2013} to solve ${\cal R}_{\bm}^{h,\tau}$. 
The algorithm is obtained by reformulating the complementarity conditions (\ref{eq:cons2a}) using active sets
based on the primal variable $\mathbf{u}$ and the dual variable $\bfmu$. 
Here we use the notation $(u_i^k)_j$ and $(u_i^{n-1})_j$
where $k$ denotes the $k$-th iteration in the PDAS algorithm and $n-1$ is
the $(n-1)$-st time step. This is of course a misuse of notation for $k=n-1$. In addition for $c>0$, we set 
$$
\mathcal{A}_i^{k+1}:=\{ j \in \mathcal{J} : c \cdot (u^{k}_i)_j - {(\mu^{k}_i)_j} < 0 \},
$$
{
and we define the lumped mass diagonal matrix $M := (m_{ij})_{i,j\in\mathcal{J}}$ with $m_{ij}=(\chi_i,\chi_j)_h$ and 
the stiffness matrix $L := (l_{ij})_{i,j\in\mathcal{J}}$  with $l_{ij}=(\nabla \chi_j, \nabla \chi_i)$.
%We set $\Mjj:=m_{jj}$, $j\in\mathcal{J}$, and 
We define the mass lumped vector ${\bf m} := (\Mjj)_{j\in \mathcal{J}}$, 
and the entries of the matrix $\mathbf{A}$ by $a_{ij}$, $i,j=\{1,\dots,N\}$.} %and $a^I_{ij}$, $i,j=\{1,\dots,N\}$.}

~~\\

{\bf Primal-Dual Active Set Algorithm (PDAS):}
%{
\begin{enumerate}
\item[0.]
Set $c=\frac{2}{h^2}0$, $k=0$ and initialise $\mathcal{A}^0_i\subset\mathcal{J}$ for all $i \in \{1,..., N\}$.

\item[1.] 
Define 
$\mathcal{I}_i^{k}=\mathcal{J}\setminus \mathcal{A}^{k}_i$ for all $i \in \{1,...,N\}$.\\
Set $(u^{k}_i)_j= 0$ for  $j \in \mathcal{A}^k_i$, $(u^{k}_i)_j= 1$ for  $j\in \mathcal{I}^k_i\setminus \mathcal{D}^k_i$ and  
$(\mu^k_i)_j=0$ for $j\in \mathcal{I}_i^k$ for all $i \in \{1,...,N\}$.

\item[2.]
Set $\mathcal{D}^k_i:={\cal I}^k_i\cap(\bigcup\limits^N_{j=1\atop j\ne i}{\cal I}^k_j)$, $\mathcal{D}^k:=\bigcup\limits^N_{i=1}
\mathcal{D}^k_i$. Solve the discretised 
PDE (\ref{eq2a}) on the interface $\mathcal{D}^k$ with the constraints
(\ref{eq2b1})-(\ref{eq2lam}) to obtain
$(u^k_i)_j$ for all $(i,j)$ such that $j \in \mathcal{D}^k_i, i \in \{1,..., N\}$,  $\Lambda_j^k$ for all $j \in \mathcal{D}^k$, and $\lambda_i^k$ for all $i \in \{1,...,N\}$. More precisely we solve

\begin{align}
&\tfrac{\varepsilon^2}{\tau} (u_i^{k})_j %-  \sum_{m=1}^N a^I_{im}(u^k_m)_j
+ \tfrac{ \varepsilon^2}{\Mjj} \sum_{r\in \mathcal{J}} l_{rj}(u_i^{k})_r 
-  \lambda_i^{k}-\Lambda_{j}^k
= \tfrac{\varepsilon^2}{\tau} (u_i^{n-1})_j %\nonumber\\
%& \hspace*{2.5cm}
+  \sum_{m=1}^N a_{im}(u^{n-1}_m)_j\ \nonumber\\
& \hspace*{5.5cm}\textrm{ for } j\in\mathcal{D}^k_i ~~\textrm{ and } i\in\{1,\dots,N\}\,,\label{pdas1}\\[2mm]
& \sum_{i=1}^N (u_i^k)_j = 1 ~ \textrm{ for } j\in\mathcal{D}^k, \label{pdas3}\\
&\sum_{j\in \mathcal{J}}\Mjj ( (u_i^{k})_j - (u_N^k)_j)
=  \sum_{j\in  \mathcal{J}}\Mjj (m_i - m_N) ~ \textrm{ for } i \in \{1,...,N-1\}\,, \label{pdas2}
\end{align}
where $\lambda_N^k = -\sum_{i=1}^{N-1}\lambda_i^k$ %-\lambda_1^k - ... - \lambda_{N-1}^k$ 
is used in (\ref{pdas1}).

\item[3.] Define $\Lambda^k_j$ for all $j\in {\cal I}^k_i\setminus\mathcal{D}^k$ as 
\begin{eqnarray*}
\Lambda^k_j=\frac{\varepsilon^2}{\tau}(u^k_i)_j%-\sum^N_{m=1}a^I_{im}(u^k_m)_j
+\frac{\varepsilon^2}{\Mjj}\sum_{r\in\mathcal{J}}
l_{rj}(u^k_i)_r-\lambda^k_i
-\frac{\varepsilon^2}{\tau}(u^{n-1}_i)_j+\sum^N_{m=1}a_{im}(u^{n-1}_m)_j\,.
\end{eqnarray*}

\item[4.] Determine $(\mu^k_i)_j $ for $j \in \mathcal{A}_i^k$ using (\ref{eq2a}) for all
$i=1,...,N$ as
\begin{eqnarray*}
(\mu^k_i)_j= 
\tfrac{\varepsilon^2}{\tau}(u_i^{k})_j %-  \sum_{m=1}^N a^I_{im}(u_m^k)_j
 +  \tfrac{\varepsilon^2}{\Mjj} \sum_{r\in \mathcal{J}} l_{rj}(u_i^{k})_r 
 - \lambda^{k}_i- \Lambda^k_j 
 - \tfrac{\varepsilon^2}{\tau} (u_i^{n-1})_j+ \sum_{m=1}^N a_{im}(u_m^{n-1})_j\,.
\end{eqnarray*}

\item[5.] Set
$\mathcal{A}_i^{k+1}:=\{ j \in \mathcal{J} : c (u^{k}_i)_j - {(\mu^{k}_i)_j} < 0 \}$ for 
$i \in\{1,..., N\}$.

\item[6.] 
If $\mathcal{A}_i^{k+1}=\mathcal{A}_i^k$ for all $i \in \{1,...,N\}$ stop, 
otherwise set $k=k+1$ and goto 1.
\end{enumerate}

%\vspace{3mm}
%
%\begin{remark}
%In \cite{BlankGarkeSarbuStyles2013} the authors show local convergence of the Primal Dual Active Set method, however as they mention in Remark 4.4, the convergence radius is unknown. In practice the oscillatory behaviour of the method can occur if the choice of $c$ is not appropriate, see \cite{Blank_scalar_nonlocal} 
%Global convergence is not of large interest here, as we study a discrete time
%evolution and hence we always have good starting values from the previous timestep.
%However, the appropriate scaling of the Lagrange multiplier ?? by 1
%?? , or respectively
%the choice of the parameter ?? is essential to avoid oscillatory behaviour
%due to bilateral constraints (see [4, 5]).
%%In Section 4.2 of \cite{Blank_scalar_nonlocal}  an example of the scalar case in one space-dimension, with $N=2$ and $\Omega=(-1,1)$,  is presented in which oscillatory behaviour of the method is observed if $c$ is chosen to be less than $\tfrac{1}{h^2}$. Noting the 
%\end{remark}
%

\newpage 

\section{Preconditioning}\label{s:precon}

\subsection{Schur Complement}
The main computational cost in the above algorithm is the solving of the system of equations \xref{pdas1}, \xref{pdas2} and \xref{pdas3}. To do this we firstly introduce some matrix notation. At the $k$-th iteration step we define ${\cal K}^k$, to be $N$ diagonal blocks with the $i$-th block, $ {\cal K}_i^k$, being associated with the active set, ${\cal D}_i^k$, of the $i$-th phase equation of \xref{pdas1}. More precisely,  
$$ 
{{\cal K}_i^k 
:= \tfrac{\varepsilon^2}{\tau} (m_{rj})_{r\in\mathcal{J},j\in \mathcal{D}^k}+  \varepsilon^2( l_{rj})_{ r\in \mathcal{J},j\in\mathcal{D}^k}.}% ~~~~ \textrm{ for } j\in\mathcal{D}^k.}
%\frac{\epsilon^2}{\tau} {M}_{i}^k + \epsilon^2 {L}_i^k.
$$
We further define,
$${\cal B}_{1}^k :=   \left[ \begin{array}{ccccc} -\left( {\bf m}_1^k \right)^T & 0 & 0 & \ldots &  \left( {\bf m}_N^k \right)^T \\
								      0& -\left( {\bf m}_2^k \right)^T & 0 & \ldots & \left( {\bf m}_N^k \right)^T\\
								       & & \ldots & & \\
								       0  & 0 & \ldots & -\left( {\bf m}_{N-1}^k \right)^T & \left( {\bf m}_N^k \right)^T 
\end{array}  \right] \in  \mathbb{R}^{ N-1 \times \omega^k}, $$
where $ \omega^k = \sum_{i=1}^{N} |D_i^k| $
and 
$${\cal B}_{2}^k :=   \left[  -{M}_1^k, -{M}_2^k, \ldots , -{M}_N^k  \right] \in \mathbb{R}^{  |\mathcal{J}|  \times  \omega^k }. $$

At the $k$-th iteration, we may write this linear system in the form; find $\bx^k :=[ \bu^k, \bflambda^k, \Lambda^k]$ such that
$ {K}^k \bx^k = {\bf b}^k,$ 
where the coefficient matrix is of the form
\begin{eqnarray}\label{eq: PDAS linear sys}
{ K}^k := \left[ \begin{array}{ccc} {\cal K}^k & ({\cal B}_{1}^k )^T &( {\cal B}_{2}^k)^T \\
				{\cal B}_{1}^k & 0 & 0 \\
		         	{\cal B}_{2}^k & 0 & 0 	
 \end{array} \right ] .
\end{eqnarray}
For convenience, from here on we will drop the superscript $k$. 
 
To develop a preconditioner for ${ K}$, we write it in the factored form
\begin{eqnarray*}\label{eq: PDAS factored}
{ K} = \left[ \begin{array}{ccc} I & 0 & 0 \\
				{\cal B}_{1} {\cal K}^{-1}& I & 0 \\
		         	{\cal B}_{2} {\cal K}^{-1}& 0 & I 	
 \end{array} \right ] \left[ \begin{array}{ccc} {\cal K} & {\cal B}_{1}^T & {\cal B}_{2}^T \\
				0 & {\cal S}_{11} &{\cal S}_{12} \\
		         	0  & {\cal S}_{21} & {\cal S}_{22} 	
 \end{array} \right ],
\end{eqnarray*}
where ${\cal S}_{ij} := -{\cal B}_{i} {\cal K}^{-1} {\cal B}_{j}^{T}$ for $i,j=1,2$. Rearranging gives,
\begin{eqnarray*}\label{eq: PDAS factored 2}
\left[ \begin{array}{ccc} {\cal K} & {\cal B}_{1}^T & {\cal B}_{2}^T \\
				{\cal B}_{1} & 0 & 0 \\
		         	{\cal B}_{2} & 0 & 0 	
 \end{array} \right ] \left[ \begin{array}{ccc} {\cal K} & {\cal B}_{1}^T & {\cal B}_{2}^T \\
				0 & {\cal S}_{11} &{\cal S}_{12} \\
		         	0  & {\cal S}_{21} & {\cal S}_{22} 	
 \end{array} \right ]^{-1} =  \left[ \begin{array}{ccc} I & 0 & 0 \\
				{\cal B}_{1} {\cal K}^{-1}& I & 0 \\
		         	{\cal B}_{2} {\cal K}^{-1}& 0 & I 	
 \end{array} \right ].
\end{eqnarray*}
Hence, if we choose the preconditioner 
$$  {\cal P}_{\text{exact}} := \left[ \begin{array}{ccc} {\cal K} & {\cal B}_{1}^T & {\cal B}_{2}^T \\
				0 & {\cal S}_{11} &{\cal S}_{12} \\
		         	0  & {\cal S}_{21} & {\cal S}_{22} 	
 \end{array} \right ],$$
the eigenvalues of the preconditioned system have value one, and it can be shown that only two GMRES iterations would be needed in this case, see \cite{murphy2000note}.  

Since each block of ${\cal K}$ consists of the standard finite element matrix for a reaction-diffusion type equation, there exists numerous practical preconditioners and solvers for this block. In particular standard algebraic, or geometric, multigrid can effectively be applied, see \cite{hackbuschbook,stuben2001}. Hence, we are left to find a fully practical approximation to the action of the block matrix 
$$ S_{\text{exact}} := \left[ \begin{array}{cc}
				 {\cal S}_{11} &{\cal S}_{12} \\
		         	 {\cal S}_{21} & {\cal S}_{22} 	
 \end{array} \right ].$$

\subsection{Approximate Schur Preconditioners}

In \cite{blanksarbustoll2012} {the authors consider} a block upper triangular preconditioner of the form:
\begin{equation}
 {P}_{1} := \left[ \begin{array}{ccc} {\cal K} & {\cal B}_{1}^T & {\cal B}_{2}^T \\
				0 & {\tilde{\cal S}}_{11} & 0 \\
		         	0  & 0 & {\tilde{\cal S}}_{22} 	
 \end{array} \right],
  \label{p1}
 \end{equation}
 where the diagonal blocks ${\tilde{\cal S}}_{ii}$, $i=1,2$ are given by
 $$  {\tilde{\cal S}}_{11} := \frac{1}{N_{}} {\cal M} {\cal K}^{-1}  {\cal M} \mbox{ and }   {\tilde{\cal S}}_{22} :=  \left( {\bf 1} \cdot {\bf 1}^T + I \right),$$
{where $I$ is the identity matrix.}  This choice  was shown to lead to the preconditioned system 
$$ K {P}_{1}^{-1} =  \left[ \begin{array}{ccc} I & 0 & 0 \\
				0 & I + E_{11} & E_{12} \\
		         	0  & E_{21} & I + E_{22} 	
 \end{array} \right ], $$
where $E_{ii}$, $i = 1,2$, are such that all eigenvalues are close to zero. 
{Note that with this choice of preconditioner, ${\tilde{\cal S}}_{ij}=0$, $i\neq j$, and hence in \cite{blanksarbustoll2012} the effect of the off diagonal blocks is not considered. }

%in \cite{blanksarbustoll2012} \textcolor{red}{the effect of the off diagonal blocks is not considered}. %this choice of analysis does

In this work we propose an alternative approximation by building on the ideas presented in \cite{elman99}. We firstly define,
$ B := \left[  {\cal B}_{1} ,  {\cal B}_{2}\right]^T$, leading to  $ S_{\text{exact}}  = B {\cal K}^{-1} B^T$.  Following  \cite{elman99} we approximate this with 
\begin{eqnarray}\label{BBT} 
S_{\text{exact}}  & \approx & (BB^T) (B {\cal K} B^T)^{-1} (BB^T), \nonumber \\
& = &  \left[ \begin{array}{cc} {\cal B}_{1} {\cal B}_{1}^T  & {\cal B}_{1} {\cal B}_{2}^T \\  
					    {\cal B}_{2} {\cal B}_{1}^T  & {\cal B}_{2} {\cal B}_{2}^T \end{array} \right] (B {\cal K} B^T)^{-1}
					      \left[ \begin{array}{cc} {\cal B}_{1} {\cal B}_{1}^T  & {\cal B}_{1} {\cal B}_{2}^T \\  
					    {\cal B}_{2} {\cal B}_{1}^T  & {\cal B}_{2} {\cal B}_{2}^T \end{array} \right] \nonumber \\
					    & := &  
					    \left[ \begin{array}{cc} D_{11} & D_{12} \\  
					    D_{21} & D_{22}  \end{array} \right] (B {\cal K} B^T)^{-1}
					  \left[ \begin{array}{cc} D_{11} & D_{12} \\  
					    D_{21} & D_{22}  \end{array} \right] := D  (B {\cal K} B^T)^{-1}  D.					    					
\end{eqnarray}
Note that, 
$$D_{11} =   \left[ \begin{array}{ccccc} \alpha_{1} + \alpha_{N}  & \alpha_{N}& \alpha_{N} & \ldots & \alpha_N \\
								\alpha_N & \alpha_2+\alpha_N & \alpha_N & \ldots & \alpha_N \\
								\ldots & \ldots & \ldots & \ldots & \ldots \\
								\alpha_N	& 	\alpha_N	 & \ldots & \ldots & \alpha_{N-1} + \alpha_N						  
\end{array}  \right] \in  \mathbb{R}^{  N-1 \times N-1 } , $$
where $\alpha_i = {\bf m}_i^T {\bf m}_i, i = 1,2,\ldots ,N$
and 
$$D_{22} =  \sum_{i=1}^N M_i M_i^T \in  \mathbb{R}^{ |{\cal J}| \times |{\cal J}|}.$$

\begin{remark}
How well this preconditioner performs is closely related to how well $B$ commutes with ${\cal K}$. We note that the matrix $D_{22}$ is diagonal. Moreover, it is a relatively large proportion of the matrix $B$, and will always be close to a constant diagonal matrix in the regions ${\cal D}^{n,k}_i$. This will even be the case when an adaptive mesh refinement strategy is used to accurately capture the interfacial region, since in this active region the elements are of similar size and shape. Hence, the commutator of $[D_{22}, C]$, with any square matrix $C$ is close to zero. This is a major factor in the quality of the approximation used in \xref{BBT}.
\end{remark}

\vspace{2mm}

We consider two preconditioners developed from the above methodology. Namely, the block diagonal choice
\begin{equation}
 { P}_{2} := \left[ \begin{array}{ccc} {\cal K} & {\cal B}_{1}^T & {\cal B}_{2}^T \\
				0 & D_{11} ({\cal B}_1 {\cal K} {\cal B}_1^T)^{-1} D_{11}   & 0 \\
		         	0  & 0 & D_{22}({\cal B}_2 {\cal K} {\cal B}_2^T)^{-1}   D_{22}
 \end{array} \right ], ~~  
 \label{p2}
 \end{equation}
 and the full approximation
 \begin{equation}
 {P}_{3} := \left[ \begin{array}{cc} {\cal K} &B^T \\
				0 & D  (B {\cal K} B^T)^{-1}  D 
 \end{array} \right].
  \label{p3}
 \end{equation}
 
 \subsection{Practical Preconditioning}
 
The implementation of any of the preconditioners $P_{i}$, $i=1,2,3$, requires a practical and scalable method to calculate the action of their inverses. 
All three preconditioners have inverses that may be written in the form, %for $i=1,2,3$
 $$  {P}_{i}^{-1} := \left[ \begin{array}{cc} {\cal K} ^{-1} & 0\\
				0 & I
 \end{array} \right ] \left[ \begin{array}{cc} I & B^T\\
				0 & -I
 \end{array} \right ]\left[ \begin{array}{cc} I & 0 \\
				0 & {{\cal S}}_i^{-1} 
 \end{array} \right ],~~\mbox{for }i=1,2,3$$
where 
$$  {{\cal S}}_1  := \left[ \begin{array}{cc} {\tilde{\cal S}}_{11} & 0 \\
		         	 0 & {\tilde{\cal S}}_{22} 	 
 \end{array} \right ], ~~
 {{\cal S}}_2  := \left[ \begin{array}{cc}  D_{11} ({\cal B}_1 {\cal K} {\cal B}_1^T)^{-1} D_{11}   & 0 \\
		         	 0 & D_{22}({\cal B}_2 {\cal K} {\cal B}_2^T)^{-1}   D_{22}	 
 \end{array} \right ]$$ 
  and 
  $$ 
 {{\cal S}}_3  :=  D  (B {\cal K} B^T)^{-1}  D  .$$
 The main work in calculating the action of these inverses is in calculating the action of the inverses of  ${\cal K}$ and ${\cal S}_i, i=1,2,3$. 
 As mentioned earlier, for the matrix ${\cal K}$ numerous efficient iterative solvers exist. 
 In {the following numerical results section} we chose to apply three Algebraic Multigrid (AMG) V-cycles with simple Gauss-Seidel smoothing, see \cite{stuben2001}. 
 
For $ {{\cal S}}_1$  and ${\cal S}_2$ we are only required to invert a small dense matrix $ {\tilde{\cal S}}_{11}$ and in finding the action of the inverse ${\tilde{\cal S}}_{22}$ we only require the inversion of the diagonal lumped mass matrix. Finally, for  $ {{\cal S}}_3$ we write,
% \begin{eqnarray}\label{D inv} 
$$
	   D^{-1}
     =    \left[ \begin{array}{cc}  I & 0 \\
		         	 -D_{22}^{-1}D_{21} & I	
 \end{array} \right ]  \left[ \begin{array}{cc} (D_{11} - D_{12} D_{22}^{-1} D_{21})^{-1}  & 0 \\
		         	 0 & D_{22}^{-1}
 \end{array} \right ]  \left[ \begin{array}{cc}  I & -D_{12} D_{22}^{-1}\\
		         	 0 & I	
 \end{array} \right ] .
 $$
% \end{eqnarray}
We note from earlier remarks that the construction of the matrix $ D_{11} - D_{12} D_{22}^{-1} D_{21} \in  \mathbb{R}^{  N-1 \times N-1 }  $ and its inverse is inexpensive. 

 \subsection{The Two Phase Problem}
 ~~
\begin{theorem}\label{theorem}
When considering a two phase problem, the right preconditioners $P_2$ and $P_3$ are identical.  Moreover, the resulting preconditioned system, $K P_i^{-1}$, $i=2,3$, is a lower triangular matrix with the diagonal consisting of  $1$'s and a solitary  
$$ a =    \frac{{\bf m}_1^T {\cal K}_1^{-1} {\bf m}_1}{ \left( {\bf m}_1^T {\bf m}_1 \right) \left(   {\bf m}_1^T {\cal K}_1 {\bf m}_1 \right)^{-1}\left( {\bf m}_1^T {\bf m}_1 \right) }.$$ 
Furthermore, the minimal polynomial of the resulting system is of degree three and hence GMRES will converge within three iterations. 
\end{theorem}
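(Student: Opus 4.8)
The plan is to reduce everything to the structural consequences of having only two phases, the key being that for $N=2$ the two interfacial sets coincide. First I would note that Step~2 of the PDAS algorithm gives $\mathcal{D}_1^k=\mathcal{D}_2^k=\mathcal{I}_1^k\cap\mathcal{I}_2^k$, so both phases are discretised on the same interfacial nodes. Consequently ${\bf m}_1={\bf m}_2=:{\bf m}$, the lumped-mass selection matrices agree, $M_1=M_2=:M$ (a square, invertible, diagonal matrix), and the two diagonal blocks of ${\cal K}$ are the same matrix ${\cal K}_1={\cal K}_2$. With the resulting sign patterns ${\cal B}_1=[-{\bf m}^T,\,{\bf m}^T]$ and ${\cal B}_2=[-M,\,-M]$, the opposite signs in ${\cal B}_1$ set against the equal signs in ${\cal B}_2$ force all cross terms to cancel: ${\cal B}_1{\cal B}_2^T=0$, ${\cal B}_1{\cal K}{\cal B}_2^T=0$ and ${\cal B}_1{\cal K}^{-1}{\cal B}_2^T=0$. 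Hence $D$, $B{\cal K}B^T$ and the exact Schur complement $B{\cal K}^{-1}B^T$ are all block diagonal, so the off-diagonal blocks $D_{12},D_{21}$ that distinguish \xref{p2} from \xref{p3} vanish and $P_2$, $P_3$ coincide.

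For the structure of the preconditioned matrix I would form $KP_i^{-1}$ from the factorisation of $P_i^{-1}$ already recorded in the excerpt, obtaining a block lower-triangular matrix whose leading block is the identity on the $\bu$-space and whose trailing block is $(B{\cal K}^{-1}B^T){\cal S}_i^{-1}$, the exact Schur complement times the inverse of its approximation. The crux is to show this trailing block is diagonal with all but one entry equal to one. For the $\Lambda$-block this is an \emph{exact} identity: using $D_{22}={\cal B}_2{\cal B}_2^T=2M^2$ and ${\cal B}_2{\cal K}{\cal B}_2^T=2M{\cal K}_1M$, and the fact that the diagonal $M$ commutes through its inverse, one gets $D_{22}({\cal B}_2{\cal K}{\cal B}_2^T)^{-1}D_{22}=2M{\cal K}_1^{-1}M={\cal B}_2{\cal K}^{-1}{\cal B}_2^T$, so the approximate and exact $\Lambda$-blocks agree and contribute an identity block of $1$'s. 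For the scalar $\bflambda$-block a direct computation gives the ratio $({\bf m}^T{\cal K}_1^{-1}{\bf m})({\bf m}^T{\cal K}_1{\bf m})/({\bf m}^T{\bf m})^2$, which is precisely the stated $a$. This shows $KP_i^{-1}$ is lower triangular with $1$'s on the diagonal except for the solitary entry $a$.

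Finally, for the GMRES count I would compute the minimal polynomial of $G:=KP_i^{-1}$ from its $3\times3$ block form on $(\bu,\bflambda,\Lambda)$, where the only off-diagonal entries are the couplings ${\cal B}_1{\cal K}^{-1}$ (into the $a$-block) and ${\cal B}_2{\cal K}^{-1}$ (from the $\bu$-block into the $\Lambda$-block). A short calculation shows $(G-I)^2(G-aI)=0$, so the minimal polynomial divides $(x-1)^2(x-a)$ and GMRES terminates in at most three iterations. To see the degree is exactly three I would check that no quadratic annihilates $G$: since ${\cal B}_2{\cal K}^{-1}\neq0$ links the $\bu$- and $\Lambda$-blocks, which both belong to the eigenvalue $1$, the restriction of $G-I$ to their span is nilpotent of index two, producing a genuine size-two Jordan block, so $(G-I)(G-aI)\neq0$ and (for $a\neq1$) no degree-two factor suffices. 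The main obstacle I anticipate is the exact $\Lambda$-block identity, which hinges on $M$ being square, invertible and diagonal so that $M^2(M{\cal K}_1M)^{-1}M^2=M{\cal K}_1^{-1}M$; verifying that this cancellation is exact — rather than merely approximate as in the general commutator remark — is what makes the two-phase result sharp.
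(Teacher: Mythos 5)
Your argument follows the paper's proof essentially step for step: write out the two-phase block structure, use ${\cal B}_1{\cal B}_2^T=0$ (and its ${\cal K}$-weighted analogues) to kill the cross terms so that $P_2=P_3$ and ${\cal S}_{\mathrm{exact}}{\cal S}_i^{-1}=\mathrm{diag}(a,I)$, and read off the lower triangular $KP_i^{-1}$ with spectrum $\{1,a\}$. Your additions — the explicit cancellation $D_{22}({\cal B}_2{\cal K}{\cal B}_2^T)^{-1}D_{22}=2M{\cal K}_1^{-1}M$ showing the $\Lambda$-block is preconditioned exactly, and the explicit verification that $(G-I)^2(G-aI)=0$ while no quadratic annihilates $G$ — are correct refinements of details the paper states without computation (it simply cites \cite{campbell1996} for the minimal polynomial claim), not a different route.
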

\begin{proof}
In the two phase case the system to be solved is of the form
$$
{ K} := \left[ \begin{array}{ccc} {\cal K} & ({\cal B}_{1})^T &( {\cal B}_{2})^T \\
				{\cal B}_{1} & 0 & 0 \\
		         	{\cal B}_{2} & 0 & 0 	
 \end{array} \right ] , 
$$
where
$$ {\cal B}_1 = \left[ \begin{array}{cc} - {\bf m}_1^T  &  {\bf m}_1^T  \end{array} \right], ~~
{\cal B}_2 = \left[ \begin{array}{cc} - M_1 &  - M_1 \end{array} \right] \mbox{ and } {\cal K} = 
\left[ \begin{array}{cc} {\cal K}_1 & 0 \\ 0 & {\cal K}_1 \end{array} \right] .$$
This leads to the exact Schur complement,
$$  {\cal S}_{exact} = \left[ \begin{array}{cc} -2 {\bf m}_1^T {\cal K}_1^{-1} {\bf m}_1 & 0 \\ 0 & -2 M_1 {\cal K}_1^{-1} M_1\end{array} \right] .$$
Moreover, since ${\cal B}_1 {\cal B}_2^T = 0$,
$$  {\cal S}_{2} ={\cal S}_{3} = \left[ \begin{array}{cc}  -2 \left( {\bf m}_1^T {\bf m}_1 \right) \left(   {\bf m}_1^T {\cal K}_1 {\bf m}_1 \right)^{-1}\left( {\bf m}_1^T {\bf m}_1 \right)  & 0 \\ 
							     0 & -2 M_1 {\cal K}_1^{-1} M_1   \end{array} \right].  $$
Hence,
$$ {\cal S}_{exact} {\cal S}_{i}^{-1} =   \left[ \begin{array}{cc} a & 0 \\ 0 & I \end{array} \right] , ~~i = 2,3,$$
where $a$ is a scalar. This leads to the full preconditioned system 
$$ K P_2^{-1} = \left[ \begin{array}{cccc} I  &  0 & 0 & 0 \\ 0 & I & 0 & 0 \\  - {\bf m}_1^T {\cal K}_1^{-1} &  {\bf m}_1^T  {\cal K}_1^{-1} & a & 0 \\  - {M}_1^T {\cal K}_1^{-1} &  -{M}_1^T  {\cal K}_1^{-1} & 0 & I  \end{array} \right] .$$
Clearly, this system has only two distinct eigenvalues, $\sigma (  K P_2^{-1} ) = \{ 1, a \}$ and the minimum polynomial for this preconditioned system is of order three. Hence, when using GMRES we would expect to obtain the exact solution in no more than three iterations, see \cite{campbell1996}.
\end{proof}

\section{Numerical Results}\label{s:results}
{In this section we present numerical results that show the efficiency of our proposed preconditioner $P_3$, (\ref{p3}). 
We begin by using exact solves for each matrix in the preconditioning system, then in later results we apply three Algebraic Multigrid (AMG) V-cycles with simple Gauss-Seidel smoothing, see \cite{stuben2001}, for calculating the action of the inverse of ${\cal K}$. %In doing so we see that the use of the inexact AMG solver has little effect on the proposed solver. 
We denote the approximate preconditioners, in which we use the inexact AMG solver, by $P_{i,AMG}$, $i=1,2,3$.  }

%we will use the convex splitting choice ${\bf A}_{I} = 0$ and ${\bf A}_E = 1$, see \cite{best choice?}. 
{We note that the fully explicit discretisation of ${\bf A}$ in ${\cal R}_{\bm}^{h,\tau}$ 
leads to an unconditionally well posed discrete problem, allowing the use of large time steps when slow dynamics are encountered, see \cite{KK2}.  
%\textcolor{red}{see Lemma 4.1 with $y_h^n=p_h^n=0$ in \cite{DES}}. 
%, see Corollary 2.2 in \cite{be2}.
Regarding time stepping, throughout we will use a simple adaptive time stepping strategy whereby: }
\begin{enumerate}
\item If the number of PDAS steps required to obtain ${\bf u}^{n+1}$ are fewer than $5$, we set $\tau_{n+1} = 1.1 \tau_n$. %increase the next time step, $\tau_{n+1} = 1.1 \tau_n$. 
\item If the number of PDAS steps required to obtain ${\bf u}^{n+1}$  is between $5$ and $10$, $\tau_n$ remains unchanged.
\item If the number of PDAS steps required to obtain ${\bf u}^{n+1}$  exceeds $10$, we recalculate ${\bf u}^{n+1}$ with a time step reduced by a half, $\tau_{n} = 0.5 \tau_n$. 
\end{enumerate}
At time step $n$ with initial time step $\tau_{n}$ and previous solution ${\bf u}^n$ using the PDAS iteration scheme calculate ${\bf u}^{n+1}$. This solution is taken when the active set size does not change between iterations. 
%\textcolor{blue}{it should really be  'This solution is taken when the active set size does not change by between iterations'.} 
We found that this led to a practical stable method.
In two space dimensions we set $\Omega = [0,1]^2$ and in three space dimensions we set $\Omega = [-0.5,0.5]^3$. \\
%all problems are posed on the unit square. In three space dimensions all problems are performed on the cube $\Omega = [-0.5,0.5]^3$.

\begin{remark}\label{tolerance remark}
During initial calculations, it was seen that the proposed  PDAS scheme was only reliable when a high tolerance was enforced on  linear solve. Hence, throughout the following results we will apply a tolerance on the relative GMRES error of $1e$--$10$. Given this constraint on the PDAS scheme, it is critical that a robust and efficient solver is used.
\end{remark}

\subsection{Two Space Dimensions}

\subsubsection{ Grain Coarsening}\label{sd}

The first problem we consider is that of
grain coarsening in which we start with an initially well mixed mixture of $N$ phases. 
{The mixture rapidly separates into bulk regions of each phase, with typically each phase having multiple bulk regions}. Once this fast dynamical process has taken place, the bulk regions then slowly diffuse, see Figure \ref{fig: evolution} where the motion of eight phases from $T=1$ to $T= 100$ is presented.

\begin{figure}[h]
  \centering
  \subfloat[$T=1$]{\label{fig:spin8_a_2D}\includegraphics[width=0.33\textwidth]{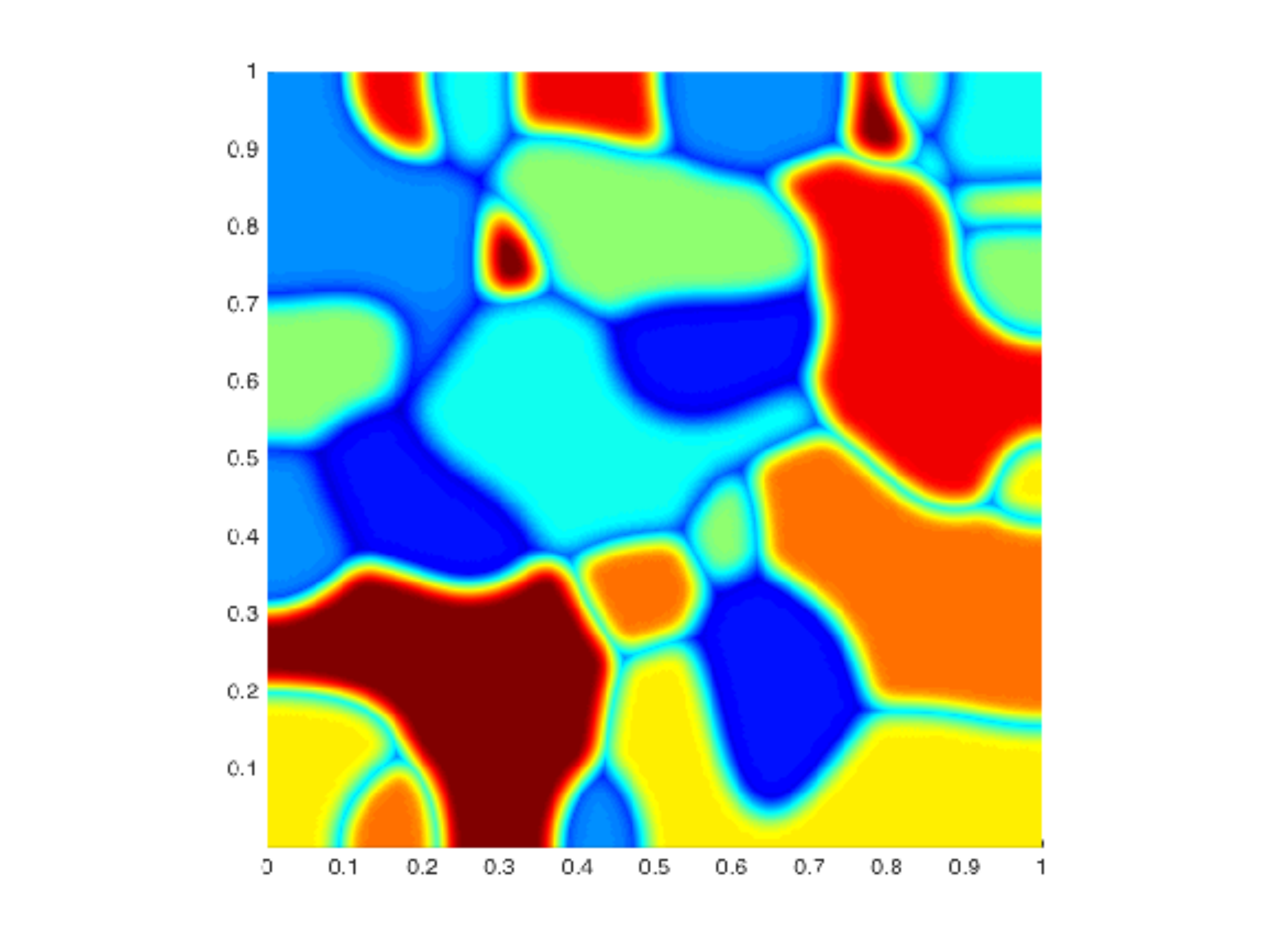}}
  \subfloat[$T=10$]{\label{fig:spin8_b_2D}\includegraphics[width=0.33\textwidth]{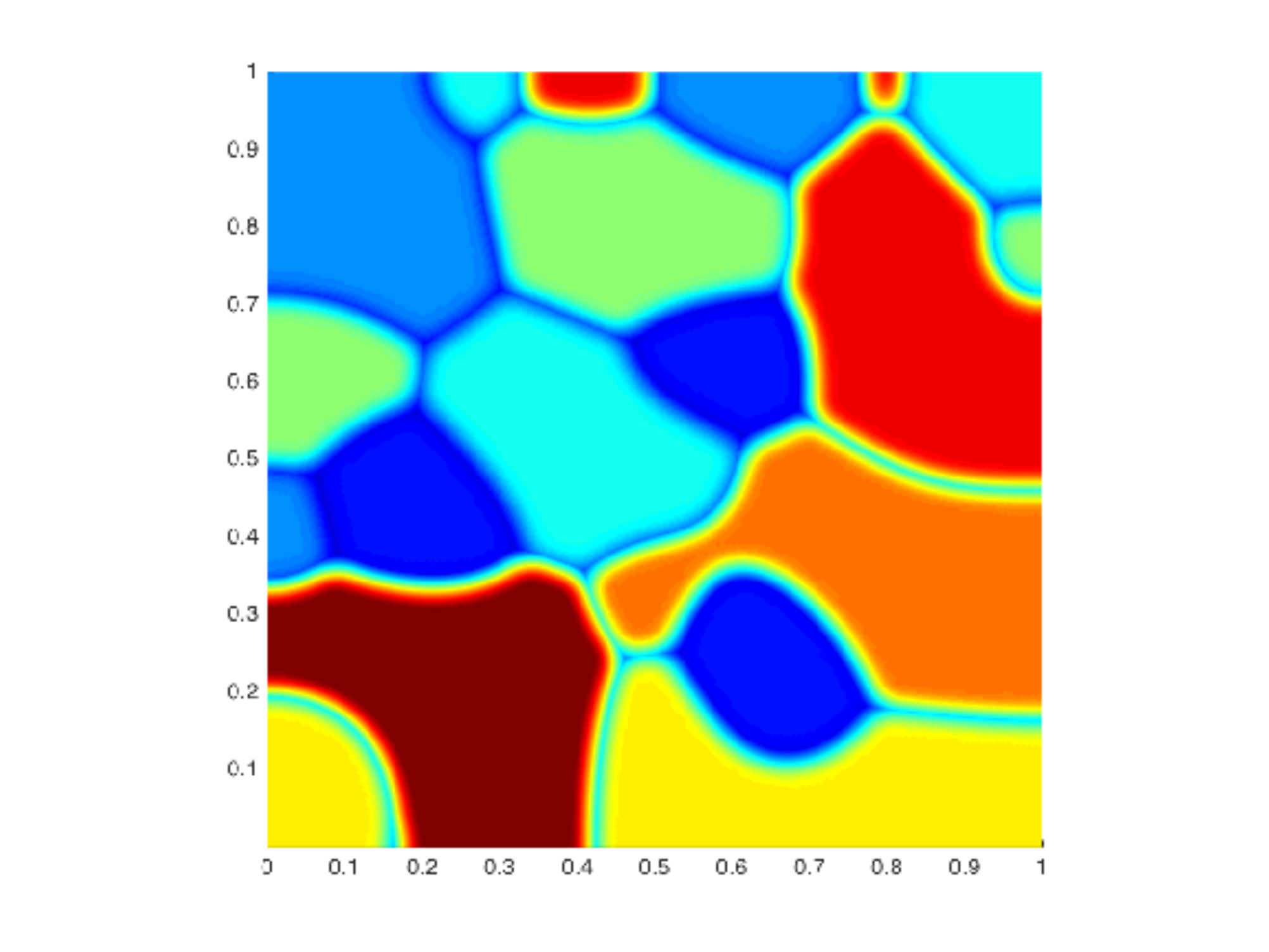}}
  \subfloat[$T=100$]{\label{fig:spin8_c_2D}\includegraphics[width=0.33\textwidth]{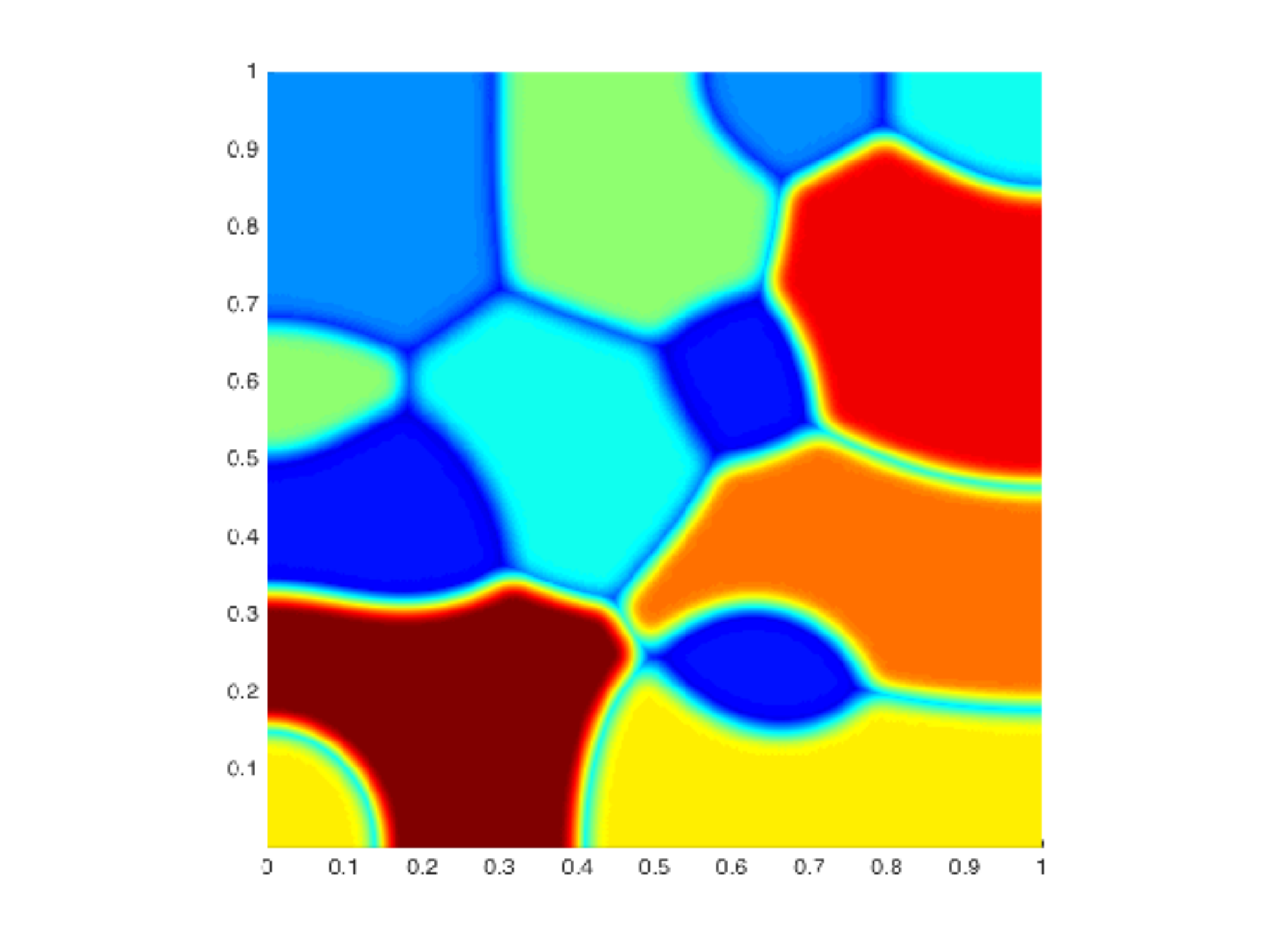}}
\caption{Evolution of eight initially well mixed phases.}
  \label{fig: evolution}
\end{figure}

\begin{table}
\begin{center}
\begin{tabular}{ | c || c | c | c || c | c | c || c | c | c |  }
\hline
\multicolumn{10}{|c|}{Two Phases} \\
\hline
	 &\multicolumn{3}{|c||}{Mesh 1 ($|\mathcal{J}|=3765$)} &  \multicolumn{3}{|c||}{Mesh 2 ($|\mathcal{J}|=14889$)} &  \multicolumn{3}{|c|}{Mesh 3 ($|\mathcal{J}|=59217$)} \\ 
	 \hline
	$\varepsilon$  &  $P_1$ &  $P_2$ &  $P_3$  &  $P_1$ &  $P_2$ &  $P_3$  &  $P_1$ &  $P_2$ &  $P_3$ \\
	 \hline
	0.04  &$ 35/{\bf 24}$ & $3$  & $3$ & $ 29/{\bf 21} $& $3$    & $3$  &  $25/{\bf 20}$  & $3$ & $3$   \\ 
	\hline
	0.02  & $36/{\bf 24}$   & $3$  & $3$  & $31/{\bf 25} $   & $3$  & $3$  &  $27/{\bf 24}$   & $3$  & $3$  \\ 
	\hline
	0.01  & $36/{\bf 22}$  & $3$  & $3$ & $31/{\bf 26} $   & $3$  & $3$ &  $28/{\bf 26}$   & $3$ & $3$   \\ 
	\hline
\hline
	\multicolumn{10}{|c|}{Four Phases} \\
	 \hline
	$\varepsilon$  &  $P_1$ &  $P_2$ &  $P_3$  &  $P_1$ &  $P_2$ &  $P_3$  &  $P_1$ &  $P_2$ &  $P_3$ \\
	 \hline
	0.04  & $70/{\bf 49}$ & $17/{\bf 12}$ & $10/{\bf 8}$ &   $64/{\bf 42} $  &   $ 18/{\bf 12}$ &    $ 13/{\bf 10}$&    $62/{\bf 40} $  &  $17/{\bf 13} $ &  $ 11/{\bf 10}$  \\ 
	\hline
	0.02  &    $ 64/{\bf 47}$   &    $ 15/{\bf 13}$    &    $9/{\bf 8}$  &   $ 64/{\bf 50}$   &   $16/{\bf 13} $   &  $10/{\bf 9} $  &  $ 62/{\bf 45} $  &  $18/{\bf 12}  $ &  $ 12/{\bf 11}$\\ 
	\hline
	0.01 & $60/{\bf 41}$   & $14/{\bf 13}$ & $8/{\bf 7}$ &$58/{\bf 49}  $ &$ 15/{\bf 14}$ &$ 10/{\bf 8}$& $61/{\bf 47} $    &$ 18/{\bf 13}  $& $11/{\bf 9} $  \\ 
	\hline \hline
\multicolumn{10}{|c|}{Six Phases} \\
	 \hline
	  $\varepsilon$  &  $P_1$ &  $P_2$ &  $P_3$  &  $P_1$ &  $P_2$ &  $P_3$  &  $P_1$ &  $P_2$ &  $P_3$ \\
	 \hline
	0.04   & $92/{\bf 57}$  &  $ 22/{\bf 15}$ & $12/{\bf 9}$ &  $  84/{\bf 63}$   &  $25/{\bf 16} $ & $15/{\bf 10} $ &  $ 89/{\bf 49}$    &  $ 24/{\bf 18}$ &  $ 11/{\bf 10}$  \\ 
	\hline
	0.02   &  $88/{\bf 65} $ & $ 21/{\bf 17} $& $9/{\bf 8} $ &     $ 85/{\bf 65}$&   $ 21/{\bf 17} $&  $ 13/{\bf 11}$&  $95/{\bf 66} $  &  $23/{\bf 17} $& $ 12/{\bf 10}$ \\ 
	\hline
	0.01  &   $ 75/{\bf 55}$ &  $ 18/{\bf 17}$ & $  9/{\bf 7} $ & $ 75/{\bf 65}$    &  $  21/{\bf 18} $ &   $ 12/{\bf 10}$&    $ 91/{\bf 61}$  &   $22/{\bf 17} $&  $12/{\bf 10}$  \\ 
	\hline
\end{tabular}
\caption{Maximum GMRES iteration counts when starting with a well mixed initial condition.}
%(DOFS) denote the degree of freedom of each order parameter.}
 \label{tab: spin 2D} 
\end{center}
\end{table}

We compare the performance of the three preconditioners $P_1$, (\ref{p1}), $P_2$, (\ref{p2}), and $P_3$, (\ref{p3}), with respect to: the number of phases, the interface width parameter $\varepsilon$, and the mesh size. In each case we use exact solves for each matrix in the preconditioning system. {The initial mesh, Mesh 1, has mesh size $h \approx 1/32$ and all other meshes are uniform refinements of this mesh, the number of nodes of each mesh is given by $|\mathcal{J}|$. In Table \ref{tab: spin 2D} we display the maximum number of GMRES iteration counts together with the average number, in the form $a/{\bf b}$, where $a$ is the maximum number and ${\bf b}$ is the average number. We consider three meshes, three values of $\varepsilon$, and three values of $N$. 
From this table, for each of the three meshes and each of the three values of $\varepsilon$, we see the dependence of $P_1$ on the number of phases, $N$. A similar dependence can be seen for the choice $P_2$, albeit a milder one. It is $P_3$ that outperforms the other two choices in this regard, as it shows almost no dependence on phase number for each of the three meshes and each of the three values of $\varepsilon$.  In addition, for the two phase problem, $N=2$, we observe the three iteration convergence, stated in Theorem \ref{theorem}.

\begin{figure}[h]
  \centering
  \subfloat[$P_1$ with $\varepsilon = 0.01$, two phases and Mesh 4.]{\includegraphics[width=0.45\textwidth]{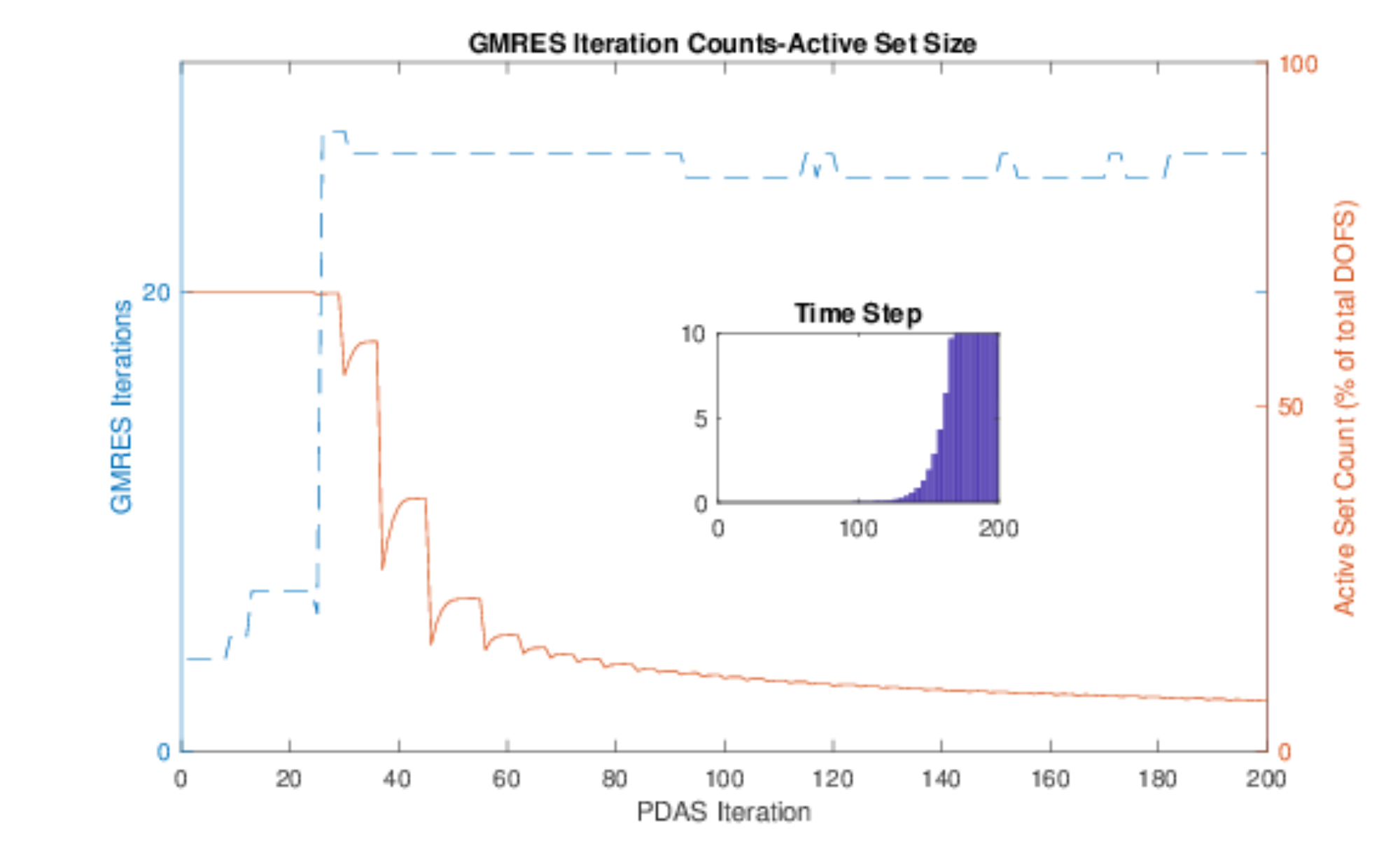}}~~~
  \subfloat[$P_3$ with $\varepsilon = 0.01$, two phases and Mesh 4.]{\includegraphics[width=0.45\textwidth]{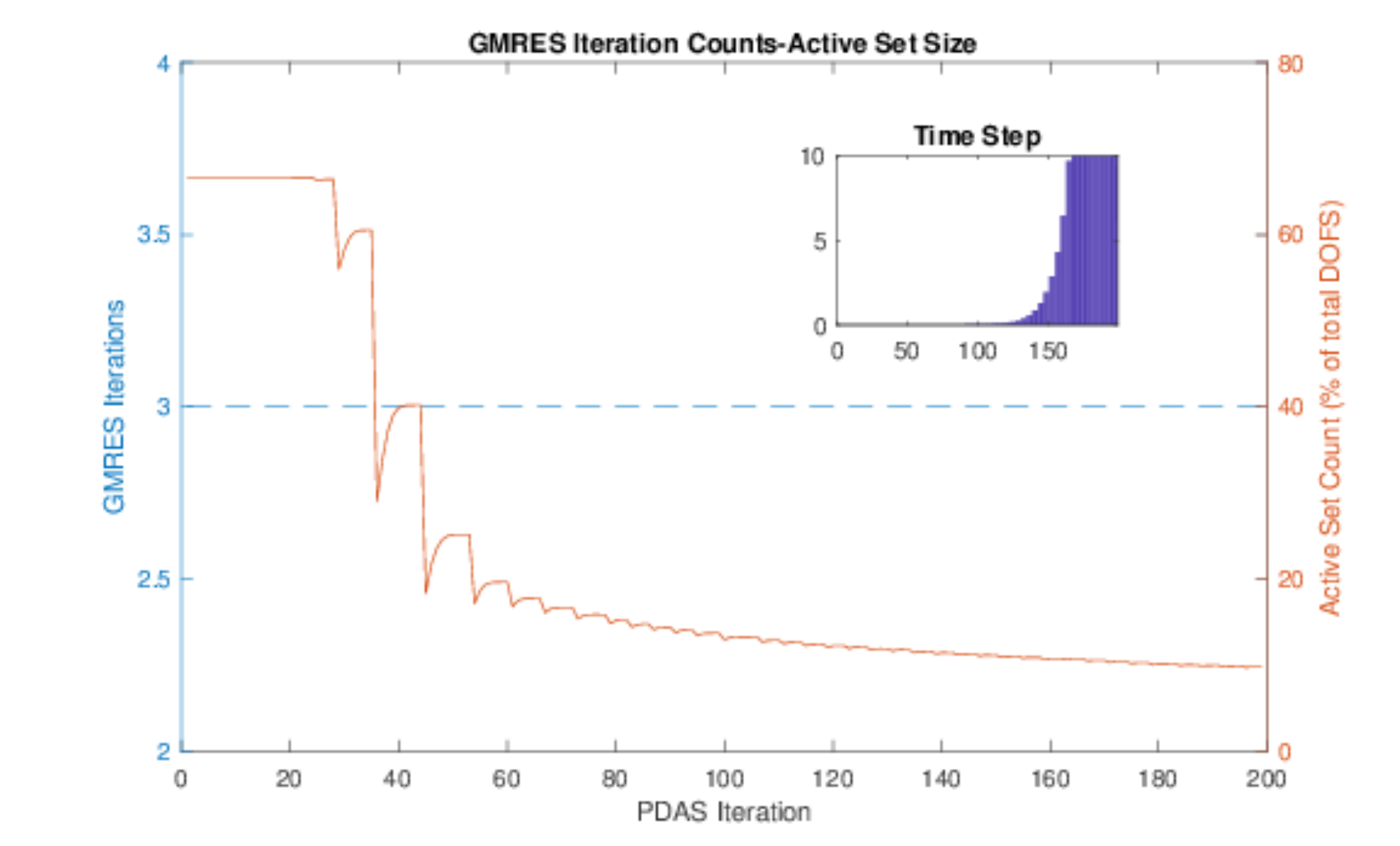}} \\
 \subfloat[$P_1$ with $\varepsilon = 0.01$, four phases and Mesh 4.]{\includegraphics[width=0.45\textwidth]{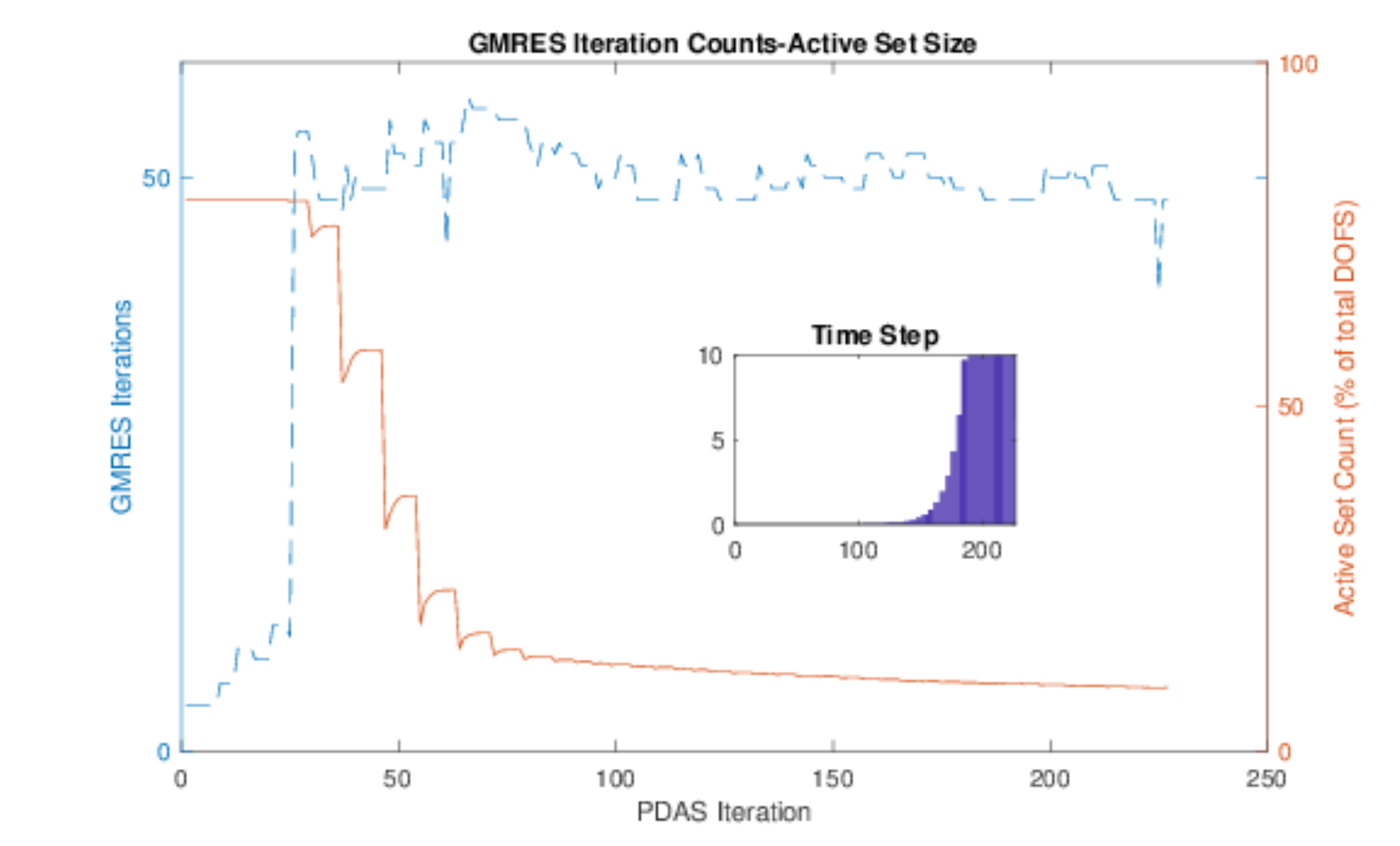}}~~~
  \subfloat[$P_3$ with $\varepsilon = 0.01$, four phases and Mesh 4.]{\includegraphics[width=0.45\textwidth]{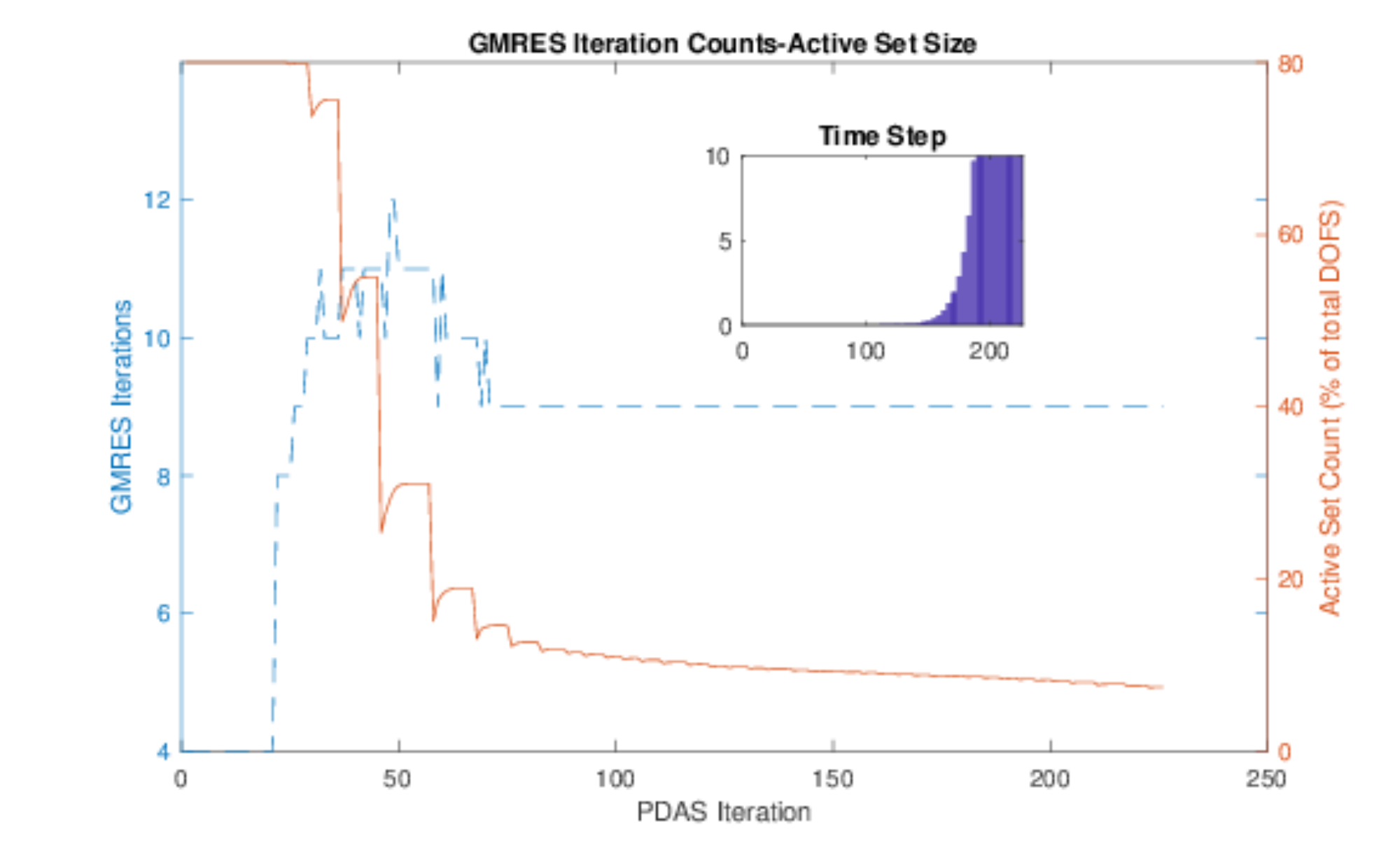}} \\
 \subfloat[$P_1$ with $\varepsilon = 0.01$, six phases and Mesh 4.]{\includegraphics[width=0.45\textwidth]{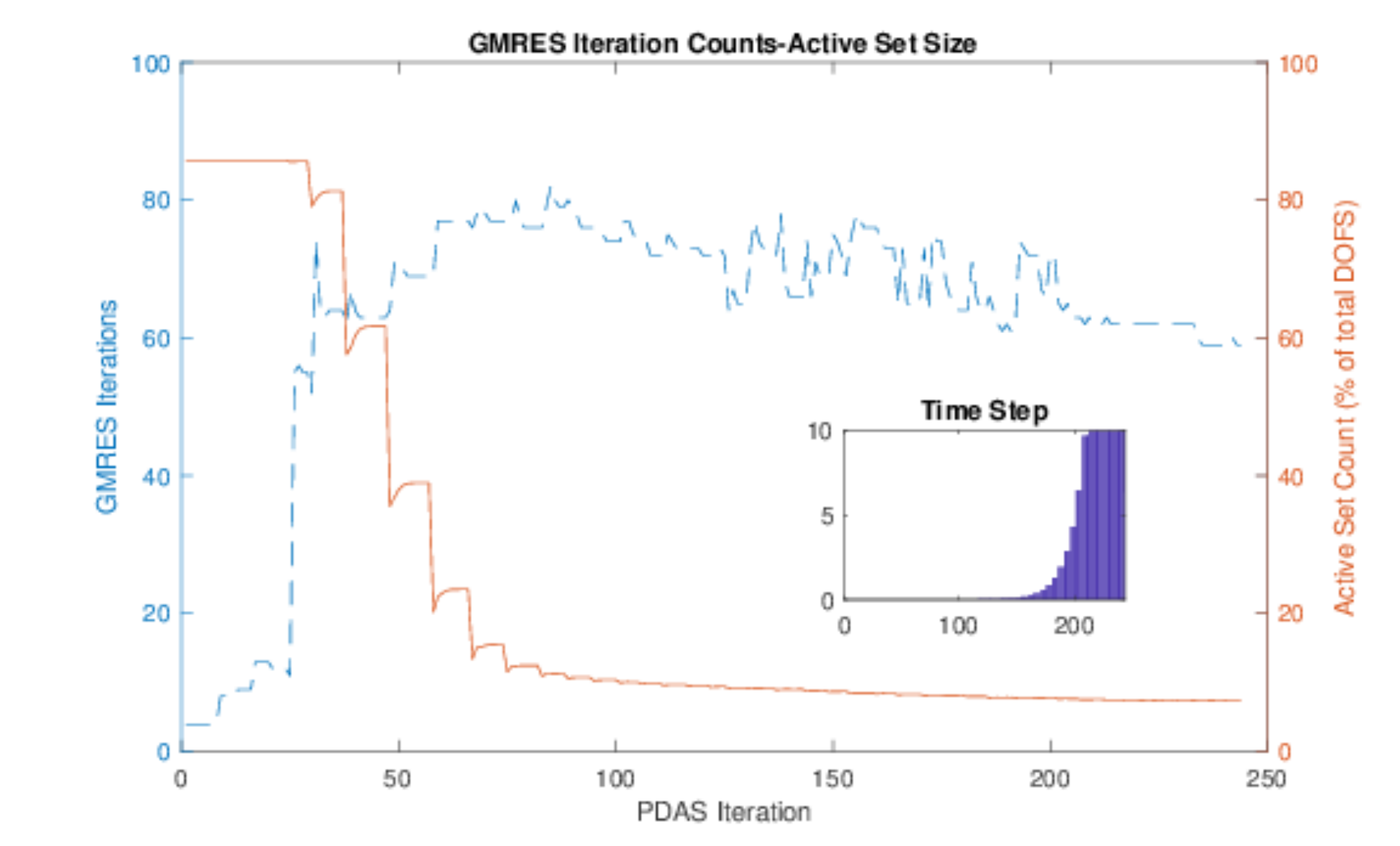}}~~~
  \subfloat[$P_3$ with $\varepsilon = 0.01$, six phases and Mesh 4.]{\includegraphics[width=0.45\textwidth]{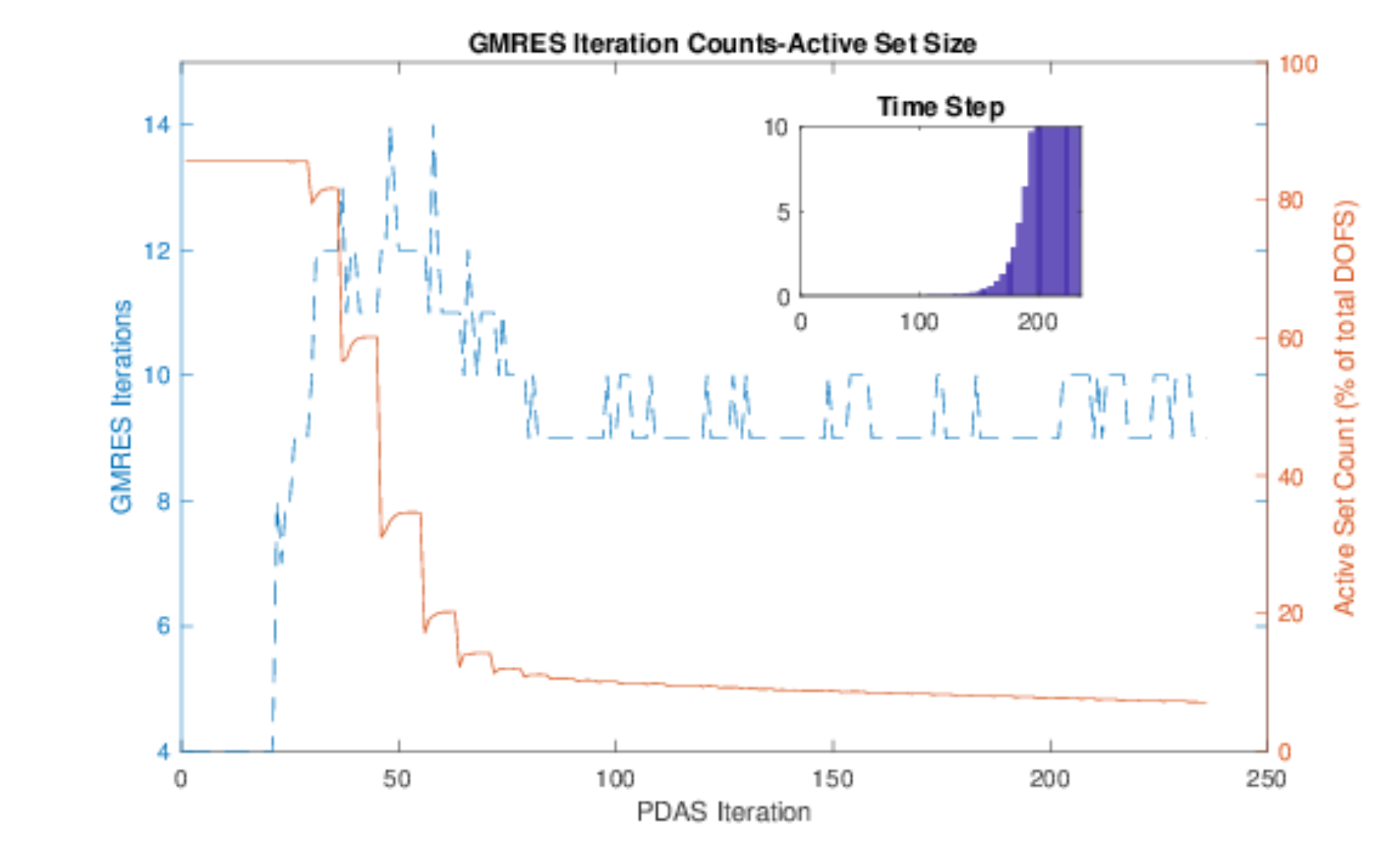}}
  \caption{Phase dependence of preconditioners $P_1$ (left) and $P_3$ (right).}
  \label{fig: Stoll-Kay phase}
\end{figure}
\begin{figure}[h]
  \centering
%  \subfloat[$P_1$ with $\varepsilon = 0.02$, four phases and Mesh 2.]
%  {\includegraphics[width=0.45\textwidth]{e50_Stoll_p4_L2_data.pdf}}~~~
%  \subfloat[$P_3$ with $\varepsilon = 0.02$, four phases and Mesh 2]
%  {\includegraphics[width=0.45\textwidth]{e50_Kay_p4_L2_data.pdf}} \\
 \subfloat[$P_1$ with $\varepsilon = 0.02$, four phases and Mesh 3]
 {\includegraphics[width=0.45\textwidth]{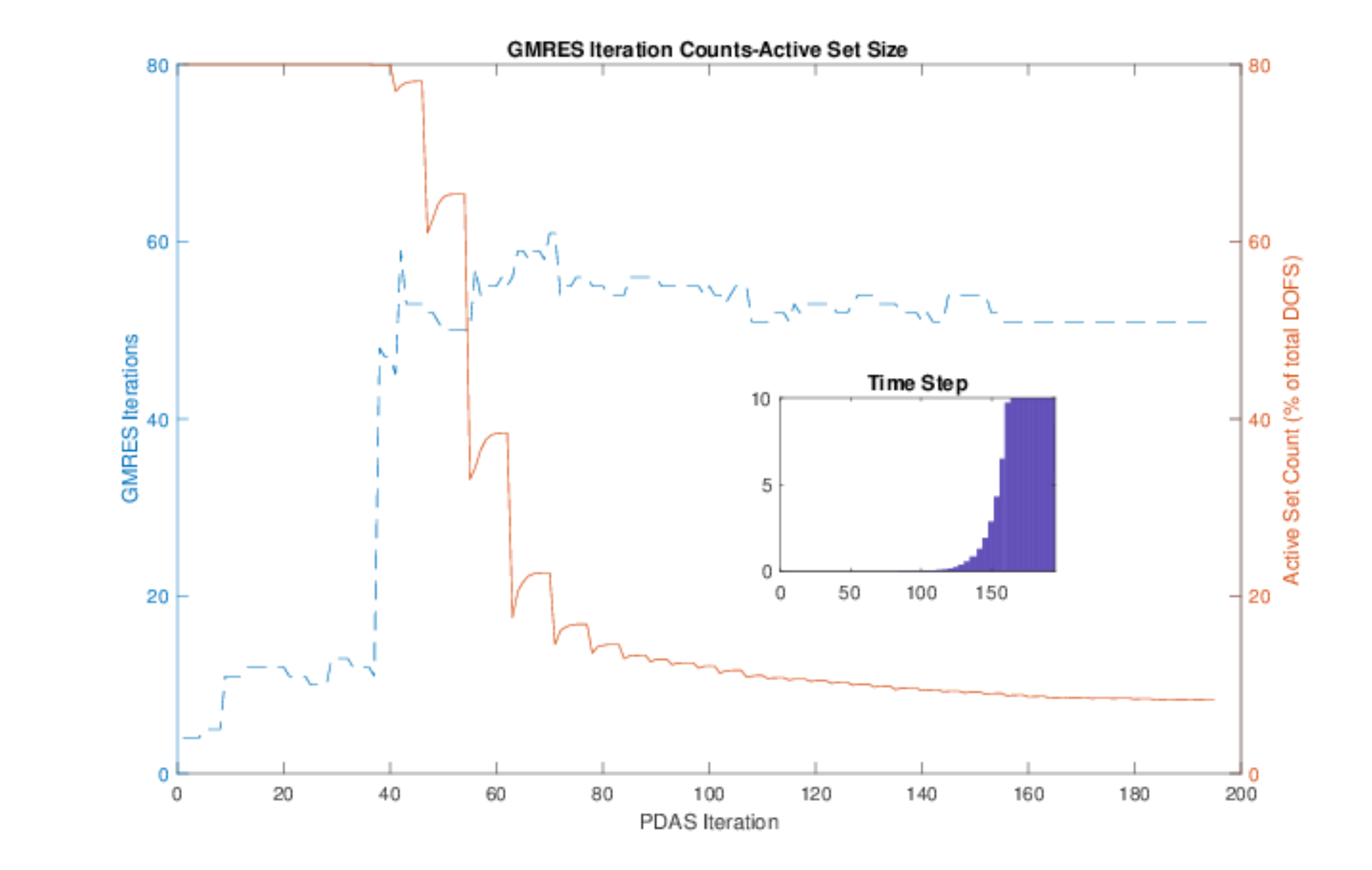}}~~~
  \subfloat[$P_3$ with $\varepsilon = 0.02$, four phases and Mesh 3]
  {\includegraphics[width=0.45\textwidth]{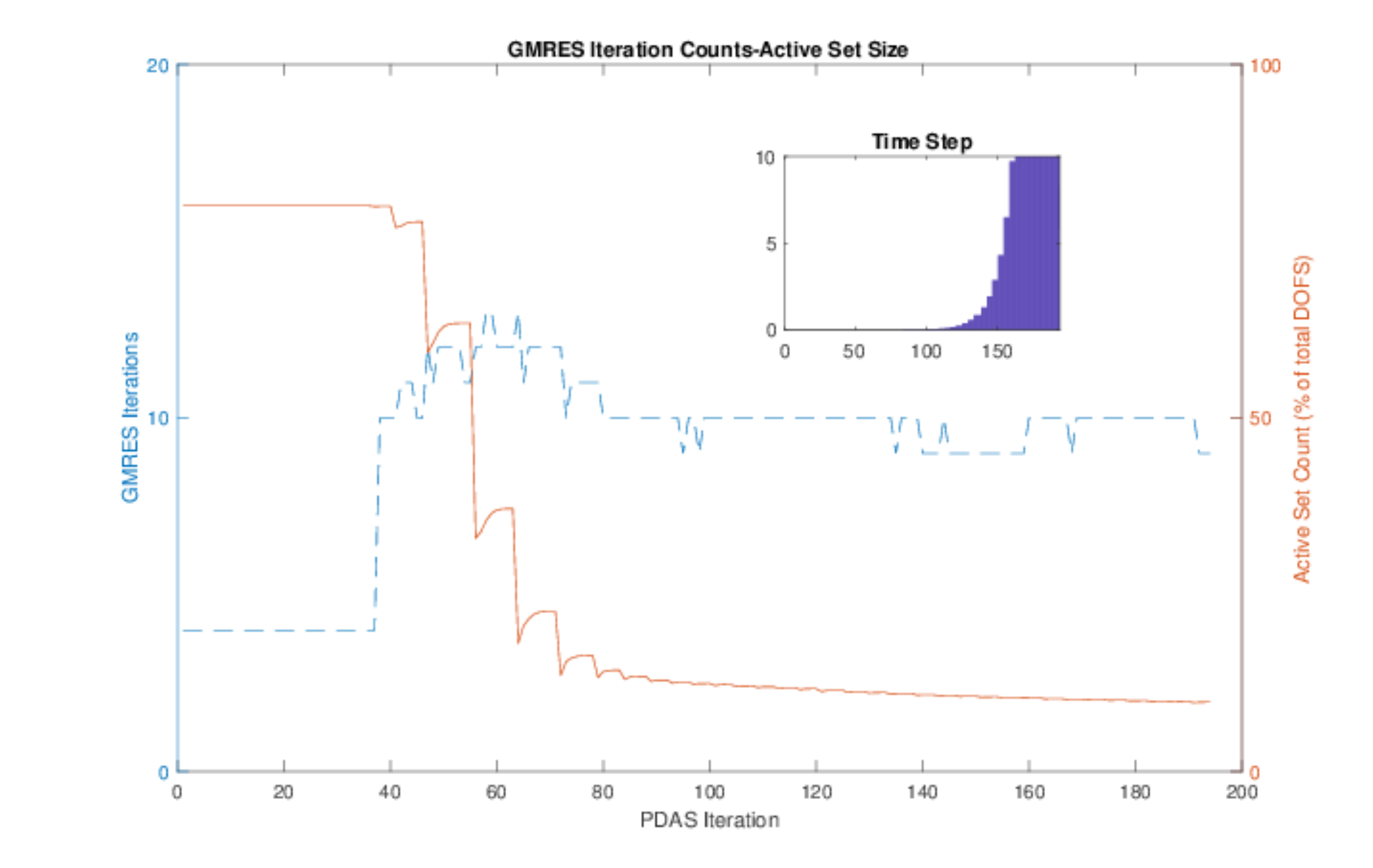}} \\
  \subfloat[$P_1$ with $\varepsilon = 0.02$, four phases and Mesh 4]
 {\includegraphics[width=0.45\textwidth]{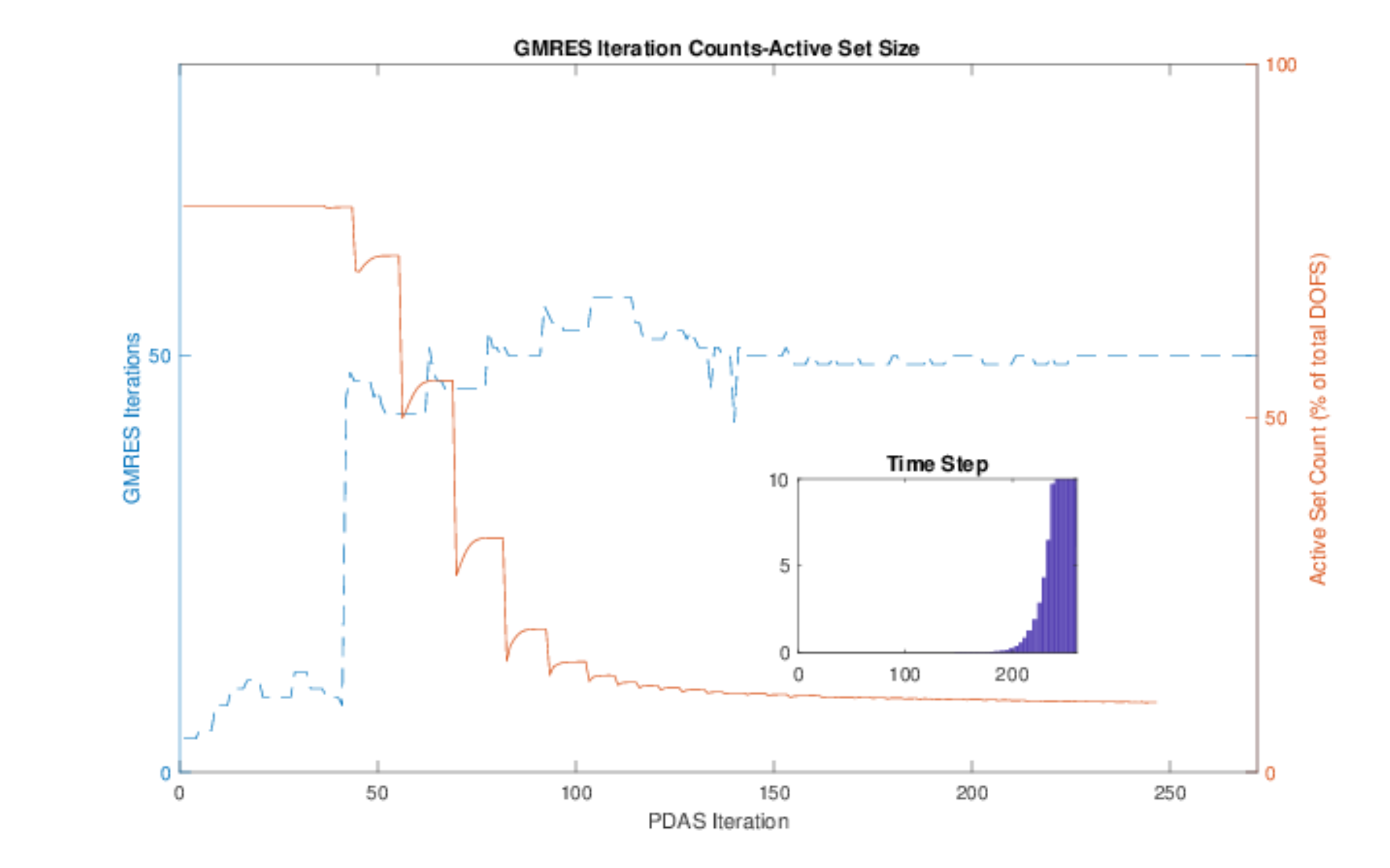}}~~~
  \subfloat[$P_3$ with $\varepsilon = 0.02$, four phases and Mesh 4]
  {\includegraphics[width=0.45\textwidth]{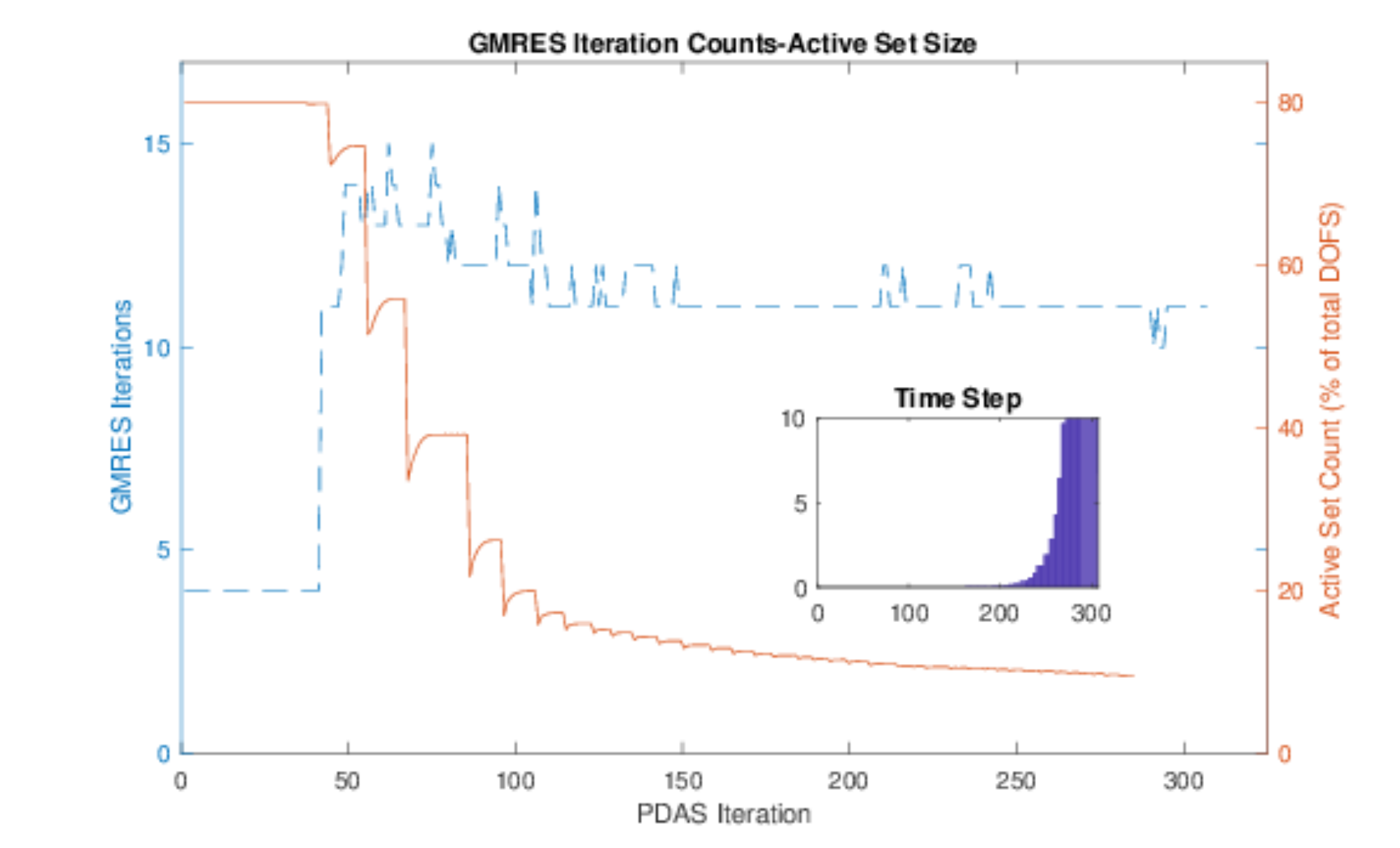}}
  \caption{Mesh dependence}
  \label{fig: Stoll-Kay mesh}
\end{figure}

\begin{figure}[h]
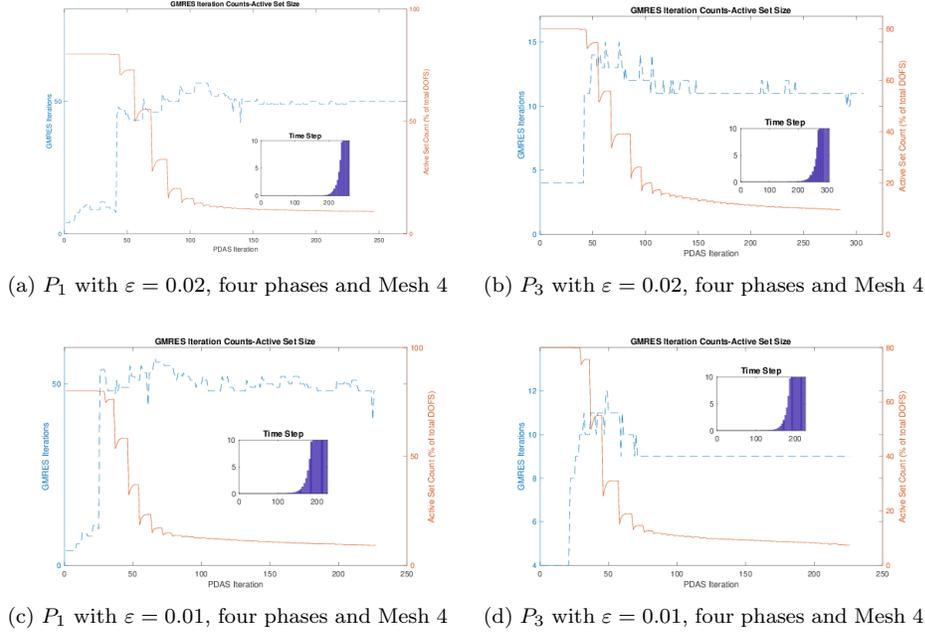

  \centering
%  \subfloat[$P_1$ with $\varepsilon = 0.04$, four phases and Mesh 4.]
%  {\includegraphics[width=0.45\textwidth]{e25_Stoll_p4_L4_data.pdf}}~~~
%  \subfloat[$P_3$ with $\varepsilon = 0.04$, four phases and Mesh 4]
%  {\includegraphics[width=0.45\textwidth]{e25_Kay_p4_L4_data.pdf}} \\
 \subfloat[$P_1$ with $\varepsilon = 0.02$, four phases and Mesh 4]
 {\includegraphics[width=0.45\textwidth]{e50_Stoll_p4_L4_data.pdf}}~~~
  \subfloat[$P_3$ with $\varepsilon = 0.02$, four phases and Mesh 4]
  {\includegraphics[width=0.45\textwidth]{e50_Kay_p4_L4_data.pdf}} \\
  \subfloat[$P_1$ with $\varepsilon = 0.01$, four phases and Mesh 4]
 {\includegraphics[width=0.45\textwidth]{e100_Stoll_p4_L4_data.pdf}}~~~
  \subfloat[$P_3$ with $\varepsilon = 0.01$, four phases and Mesh 4]
  {\includegraphics[width=0.45\textwidth]{e100_Kay_p4_L4_data.pdf}}
  \caption{$\varepsilon$ dependence}
  \label{fig: Stoll-Kay epsilon}
\end{figure}

%Figures \ref{fig: Stoll-Kay phase}, \ref{fig: Stoll-Kay mesh} and \ref{fig: Stoll-Kay epsilon} allow us to investigate the effects of the adaptive time stepping and how the size of the active set effects the GMRES iteration counts, for preconditioners $P_1$ and $P_3$. With respect to mesh size, all solvers perform well. $P3$ shows a clear consistent drop in iteration counts as the bulk regions mature. For both preconditioners, after an initial period of no bulk regions and when a very small adaptive time step is in use, we see know effects of the very large final time steps. Note, due to the dynamics of this system, these large time steps are in the period of very slow interfacial motion. 

In Figure \ref{fig: Stoll-Kay phase} we further investigate how the number of phases, $N$, affects the performance of preconditioners $P_1$ and $P_3$. We ignore $P_2$ since $P_3$ is computationally similar whilst having superior convergence rates.  We display the number of GMRES iterations throughout a simulation together with the percentage of the total number of DOFs that the active sets make up. In addition we show the effect of the active time stepping by displaying the time step size throughout the simulation. 
We set $\varepsilon=0.01$ and use Mesh 4, which has $|\mathcal{J}| =236868$. 
Similar to Table \ref{tab: spin 2D}, we see a strong dependence for $P_1$ but a very weak dependence for $P_3$. 
The results in Figure  \ref{fig: Stoll-Kay mesh} are displayed in the same format as those in Figure \ref{fig: Stoll-Kay phase}, but here we investigate the effect that the mesh size has on the performance of preconditioners $P_1$ and $P_3$. In particular we set $\varepsilon=0.02$ and $N=4$ and we show results for Mesh 3, for which $|\mathcal{J}|=59217$, and  Mesh 4. We conclude with Figure \ref{fig: Stoll-Kay epsilon} in which we set $N=4$ and use Mesh 4, with $\varepsilon=0.02$ and $\varepsilon=0.01$ to see the effect that $\varepsilon$ has on the performance of preconditioners $P_1$ and $P_3$. {In both Figures  \ref{fig: Stoll-Kay mesh} and \ref{fig: Stoll-Kay epsilon} we again see a strong dependence for $P_1$ but a very weak dependence for $P_3$.}

\subsubsection{Quadruple Junction to Triple Junction}

We now turn to an initial condition of fully formed bulk regions. We set {$\varepsilon=0.005$ and $N=5$. For the initial data we consider a square, consisting of four phases of bulk square regions, that is surrounded by a fifth phase. This unstable initial geometry rapidly evolves so that the quadruple junction is replaced by two triple junctions with $120^\circ$ angles, see Figure \ref{fig: 5 phase bulk}. }
%The region where the four phases meet rapidly evolves to remove the quad--junction, whilst the sharp surfaces smooth out. At the final time, $T= 5000$, the triple junctions present are almost perfectly formed, with each three phases present at a junction forming an angle of $120^o$, see Figure \ref{fig: 5 phase bulk}.  
\begin{figure}[h]
  \centering
  \subfloat[$T=0$]{\label{fig:Bulk5_b_2D}\includegraphics[width=0.33\textwidth]{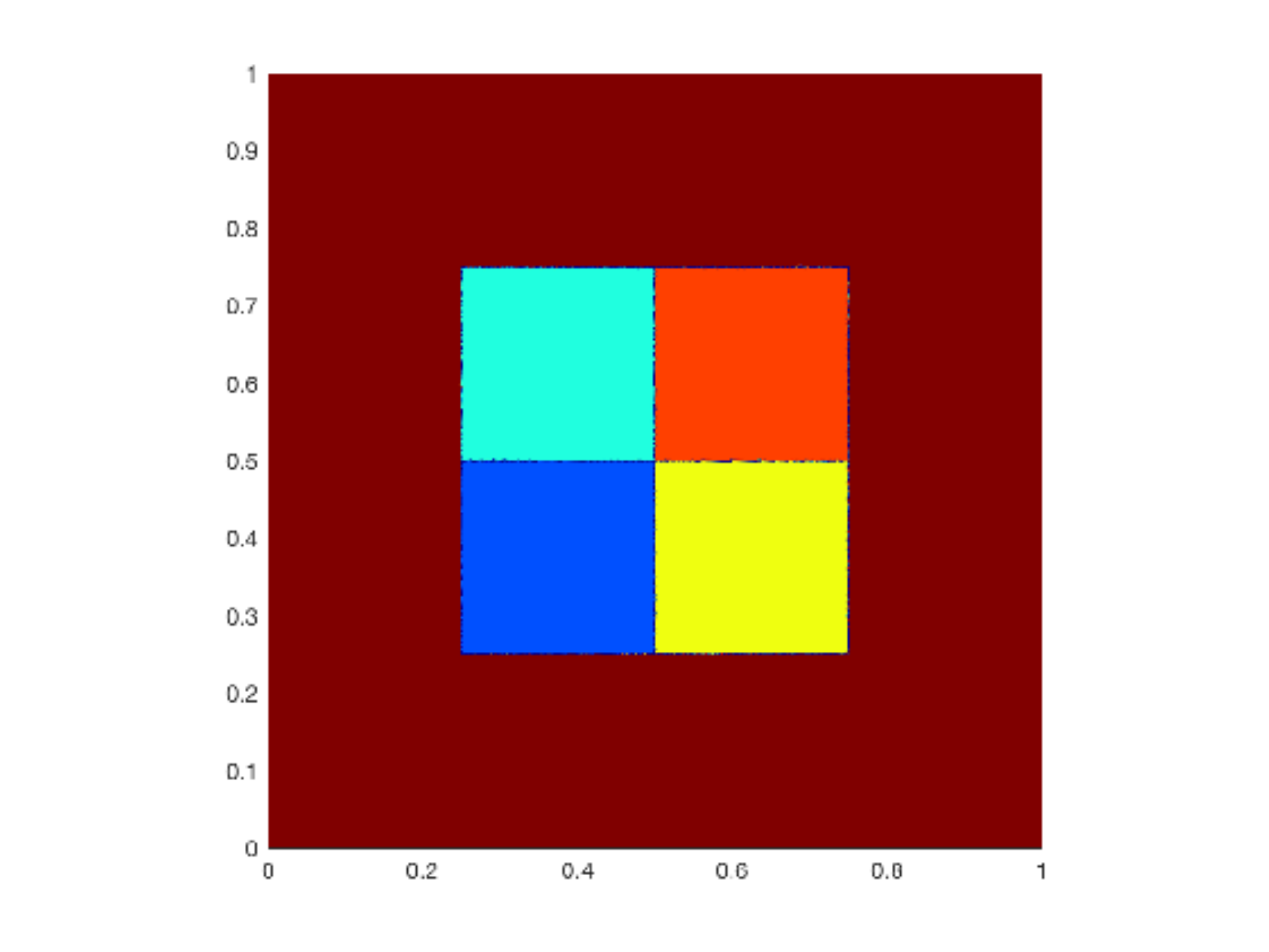}} 
  \subfloat[$T=1$]{\label{fig:Bulk5_c_2D}\includegraphics[width=0.33\textwidth]{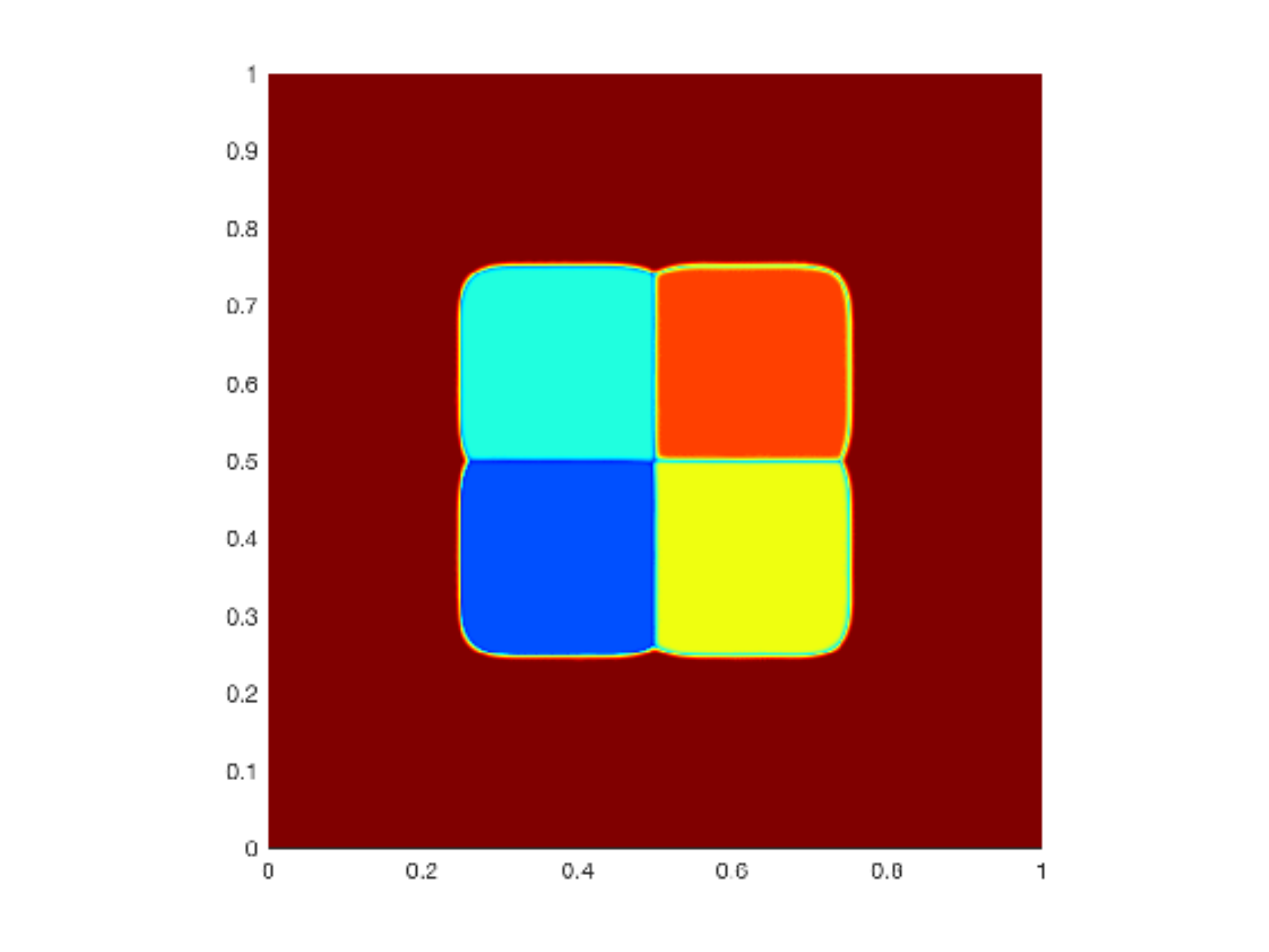}}
  \subfloat[$T=5000$]{\label{fig:Bulk5_d_2D}\includegraphics[width=0.33\textwidth]{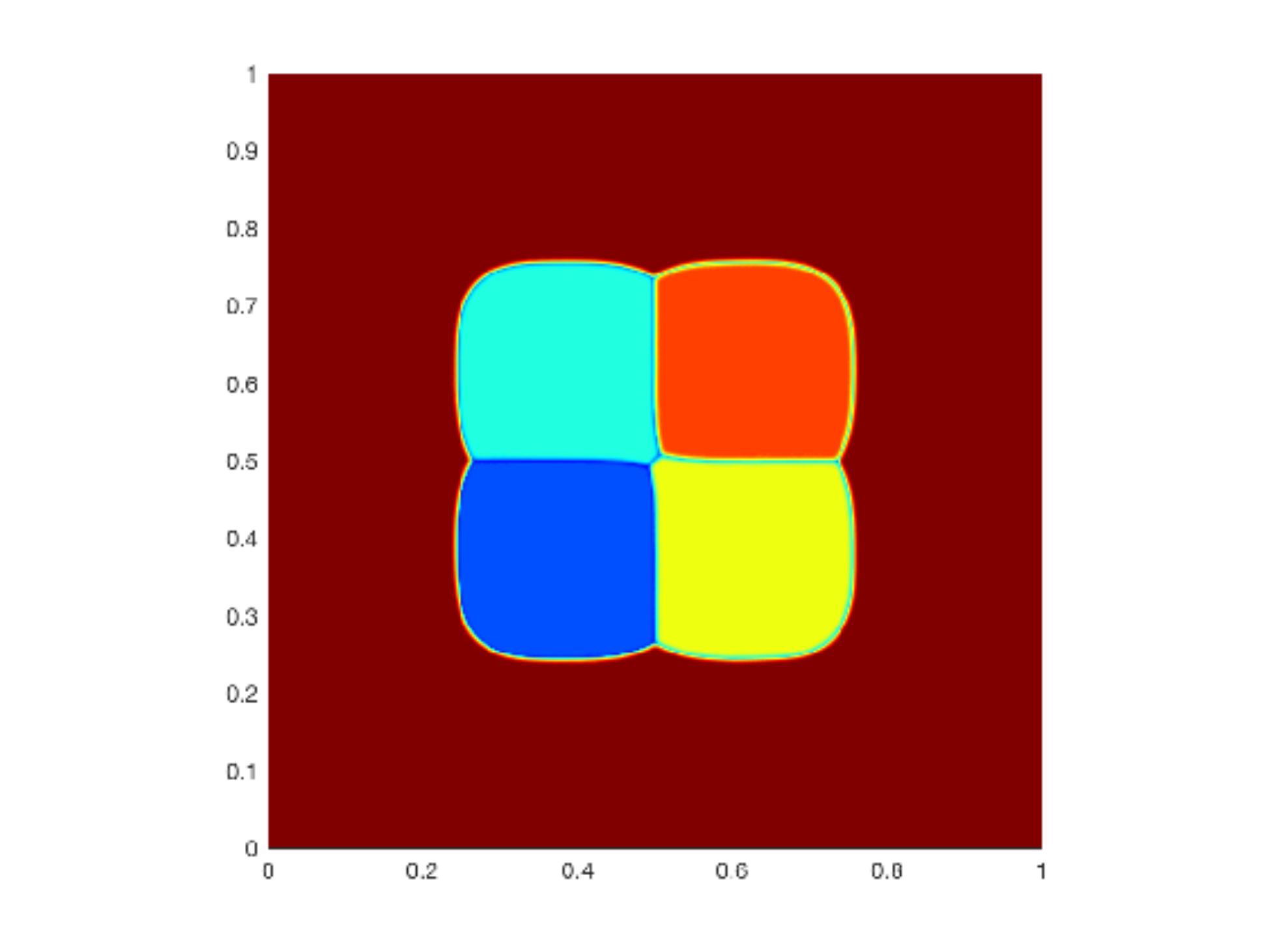}} \\
  \subfloat[$T=0$]{\label{fig:Bulk5_e_2D}\includegraphics[width=0.33\textwidth]{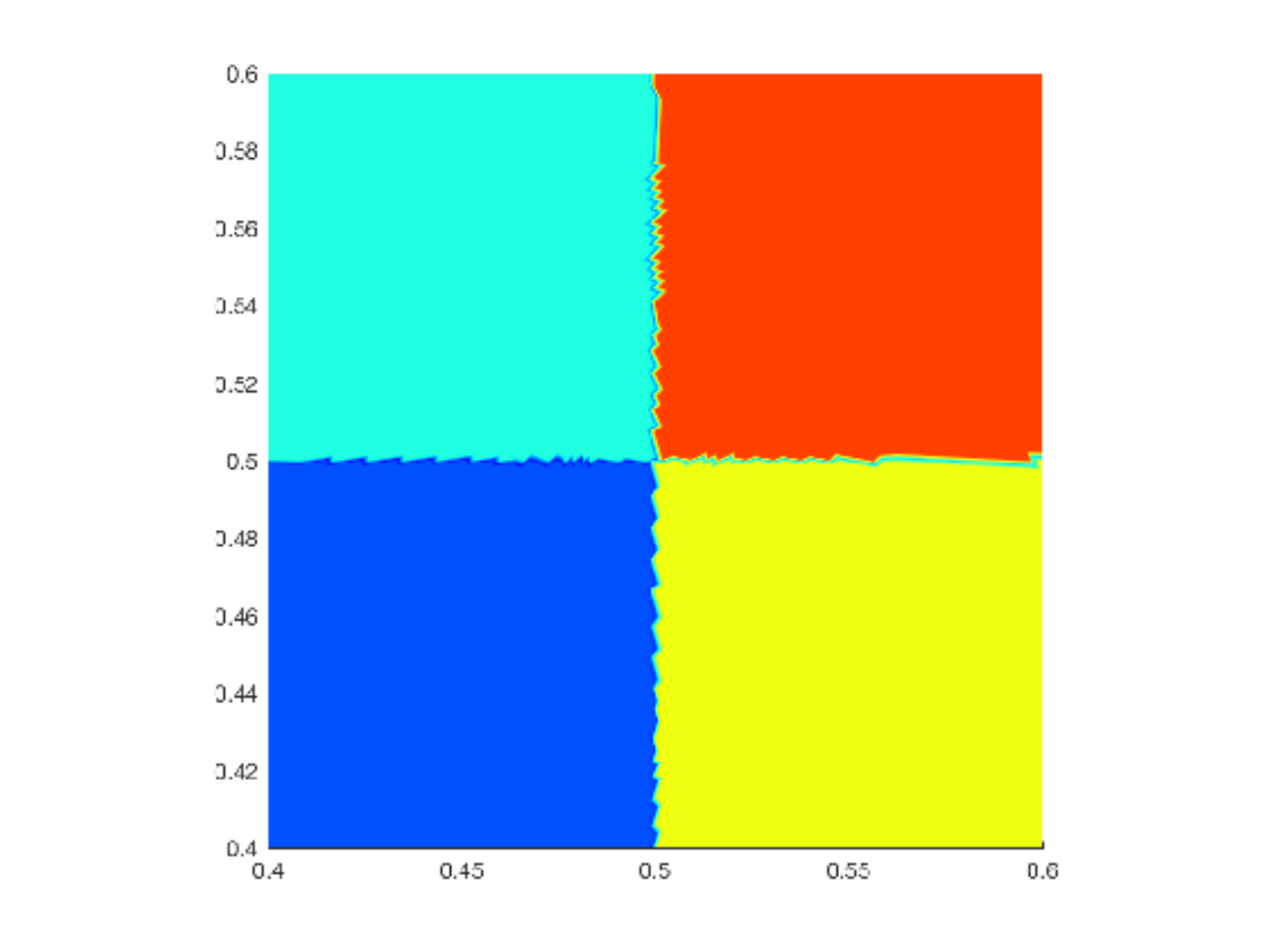}} 
  \subfloat[$T=1$]{\label{fig:Bulk5_f_2D}\includegraphics[width=0.33\textwidth]{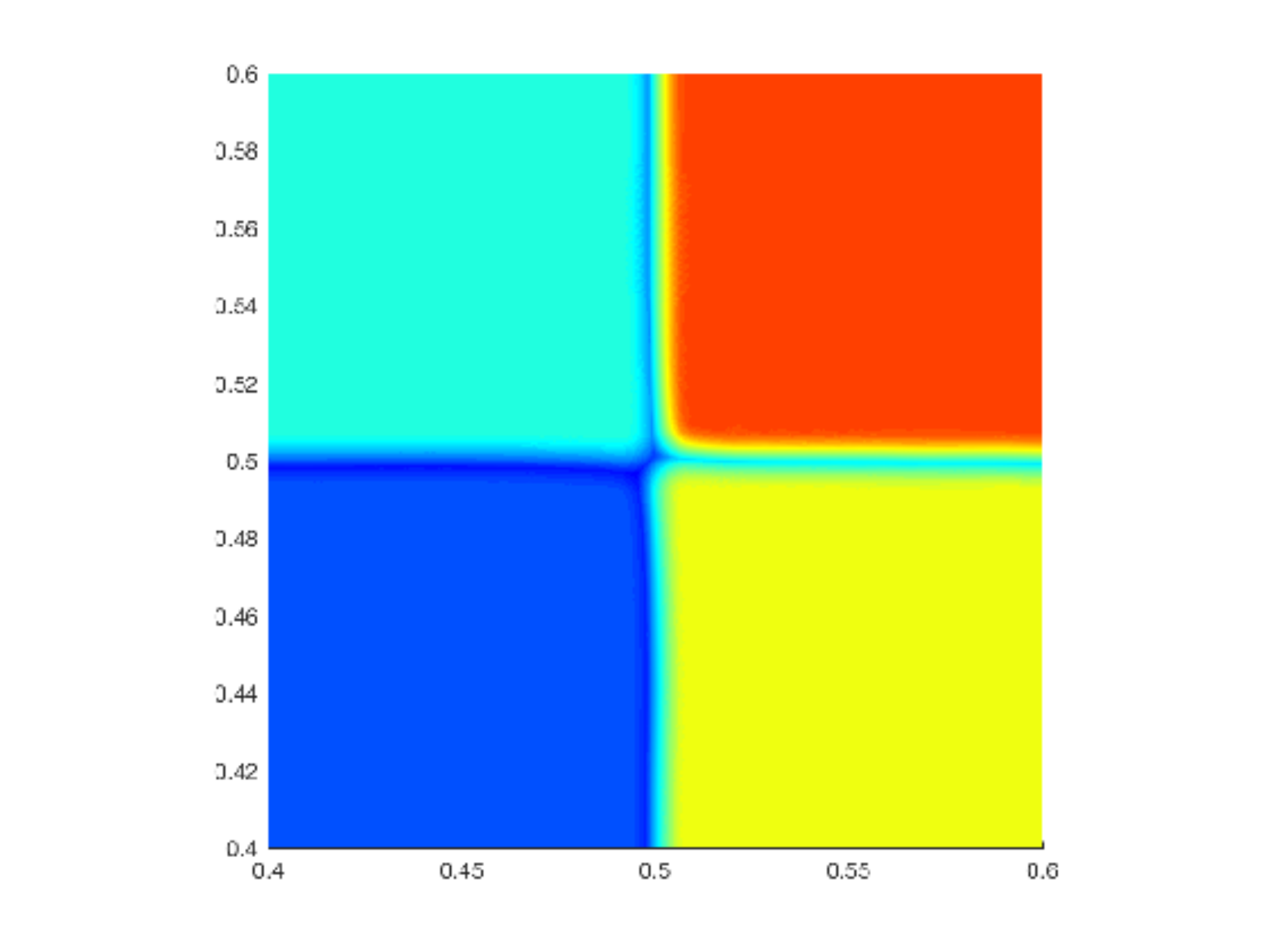}}
  \subfloat[$T=5000$]{\label{fig:Bulk5_g_2D}\includegraphics[width=0.33\textwidth]{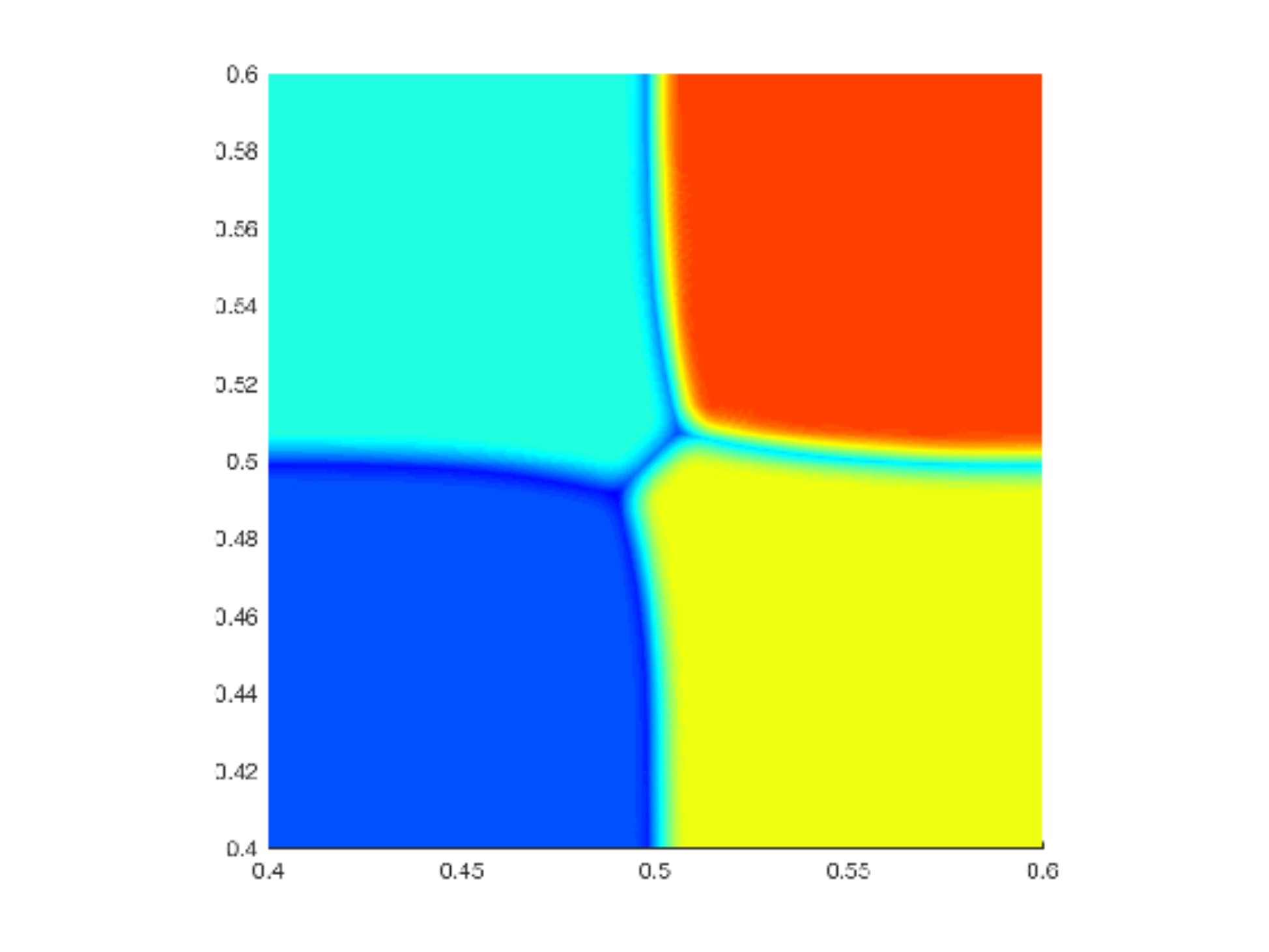}} \\
 \caption{Time evolution of four bulk phases surrounded by a fifth phase, $\varepsilon = 1/200$ and $236868$ DOFs for each phase.}
  \label{fig: 5 phase bulk}
\end{figure}
\begin{figure}[h]
  \centering
  \subfloat[GMRES Iterations and active set size]{\label{fig:Bulk5_a_2D}\includegraphics[width=0.5\textwidth]{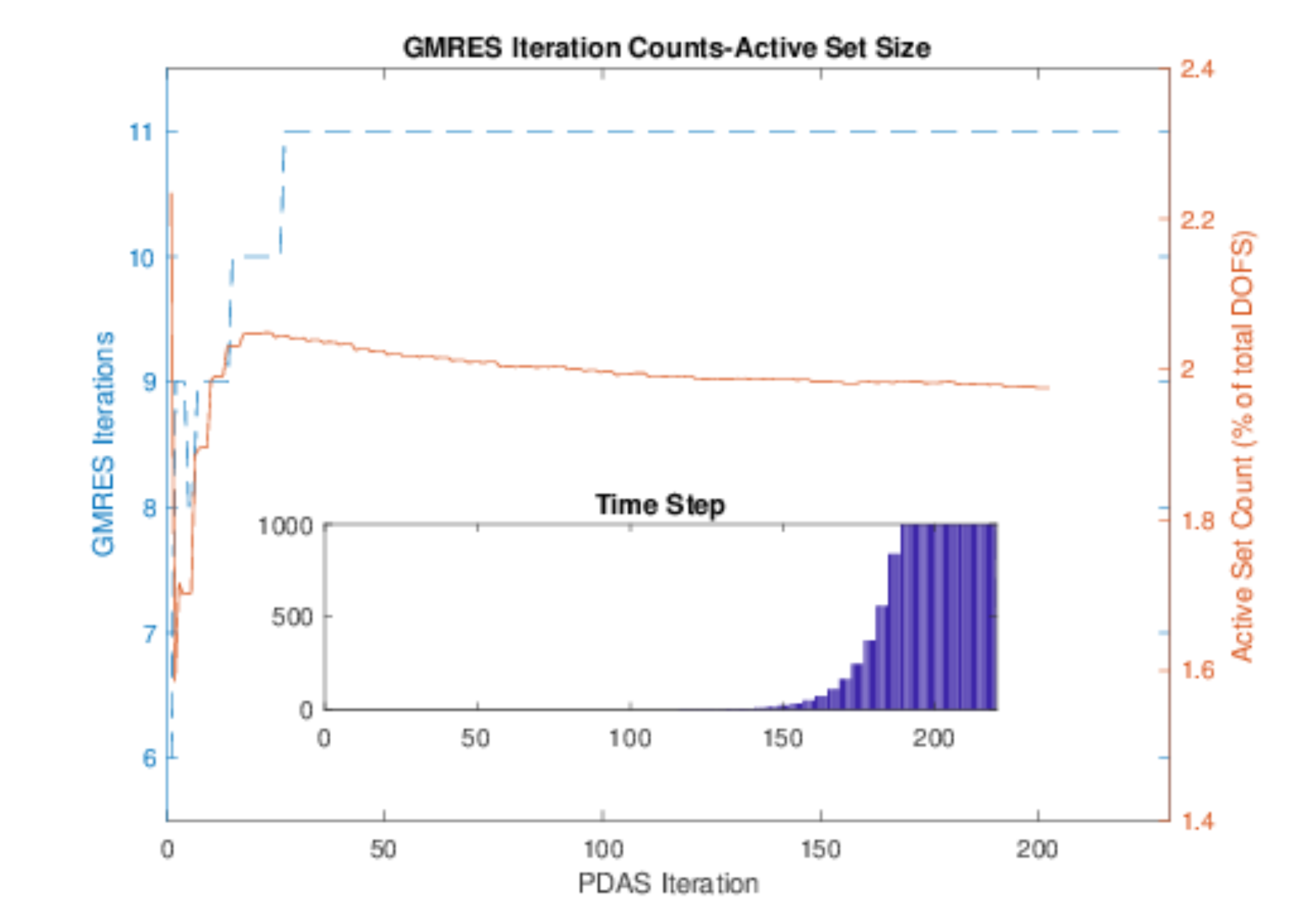}} 
 \caption{GMRES iteration count using $P_{3,AMG}$ and active set size for quadruple junction, $\varepsilon = 1/200$ and $236868$ DOFs for each phase.}
  \label{fig: 5 bulk GMRES}
\end{figure}

In Figure \ref{fig: 5 bulk GMRES} we present the iteration counts and active set size when using $P_{3,AMG}$. Given the maximum number of iteration counts for a given time step is only $11$, we conclude that the use of the inexact $AMG$ solver on the ${\cal K}$ block has little effect on proposed solver. 

\newpage

\subsection{Three Space Dimensions}

\subsubsection{Grain Coarsening}

%For two phase we have a cube of both phases well mixed, for $N>2$ we start with $N-1$ phases well mixed in a sphere surrounded by a $N-$th phase, e.g. see Figure \ref{fig: 10 phase spin 3D}, where eight phases are surrounded by an ninth. This initial condition is chosen purely for visualisation reasons. 

\begin{table}[]
\begin{center}
\begin{tabular}{ | c || c | c |  c |   }
\hline
\multicolumn{4}{|c|}{Two Phases - 3D} \\
\hline
	 $\varepsilon$  & {Mesh 1 (17576)} & {Mesh 2 (29791)}  & {Mesh 3 (68921)}  \\ 
	 \hline
	 \hline
	0.04  & $3$  & $3$  & $3$      \\ 
	\hline
	0.02  & $3$  & $3$    & $3$      \\ 
	\hline
\hline
	\multicolumn{4}{|c|}{Four Phases - 3D} \\
	 \hline
	 \hline
	0.04  & $13/{\bf 7}$ & $11/{\bf 7}$ &   $10/{\bf 7}$   \\ 
	\hline
	0.02      &    $11/{\bf 6}$  &   $ 9/{\bf 6}$   &   $ 9/{\bf 6}$     \\ 
	\hline
\hline
\multicolumn{4}{|c|}{Six Phases - 3D} \\
	 \hline
	 \hline
	0.04   & $ 14/{\bf 9}$ &  $10/{\bf 7} $ & $ 8/{\bf 6} $   \\ 
	\hline
	0.02  &  $ 10/{\bf 5.5} $ &  $ 9/{\bf 7} $ & $ 9/{\bf 6.5}$ \\ 
	\hline
\end{tabular}
\caption{Maximum GMRES iteration counts when starting with a well mixed initial condition, using the exact preconditioner, ${P}_3$. (DOFS) denote the degree of freedom of each order parameter.}
 \label{tab: spin 3D} 
\end{center}
\end{table}
In Table \ref{tab: spin 3D} we present the GMRES iteration counts when using the preconditioner $P_3$. As in the two dimensional case, we see that there is little dependence on any of the parameters, mesh size, $\varepsilon$, or number of phases, and again we observe the three iteration convergence of the two phase problem. Turning to the fully practical preconditiponer, $P_{3,AMG}$, in Table \ref{tab: AMG 3D} we present CPU timings for this problem. It is not clear how to measure how these timings scale, since as the mesh is refined and more phases are added, the active set size changes considerably. However, we feel that these non-optimized CPU timings are an excellent indicator of the scalability of the proposed approach. 
\begin{table}[]
\begin{center}
\begin{tabular}{ | c || c | c |  c |   }
\hline
\multicolumn{4}{|c|}{ $\varepsilon = 0.04$} \\
\hline
	   & {Mesh 1 (9261)} & {Mesh 2 (29791)}  & {Mesh 3 (68921)}  \\ 
	 \hline
	 \hline
	$N = 2$  & $72s$  & $342s$  & $1133s$      \\ 
	\hline
	$N = 4$  & $135s$  & $824s$  & $2961s$      \\ 
	\hline
	$N = 6$  & $284s$  & $1324s$  & $ 8356s$      \\ 
	\hline
\end{tabular}
\caption{CPU timings using ${P}_{3,AMG}$ in three space dimensions. Initial condition is well mixed and $T=2$.}
 \label{tab: AMG 3D} 
\end{center}
\end{table}

Finally, for the well mixed problem we consider an initial problem of a well mixed sphere of $8$ phases surrounded by a final pure $9$-th phase, we take $\varepsilon=0.04$. The mesh used has over a half a million nodes, this leads to a system size of more than five million degrees of freedom. The evolution of these phases can be seen in Figure \ref{fig: 9 phase spin 3D}.
\begin{figure}[h]
  \centering
  \subfloat[Phases 1-4, $T=0$.]{\label{fig:Spin10_a0_3D}\includegraphics[width=0.33\textwidth]{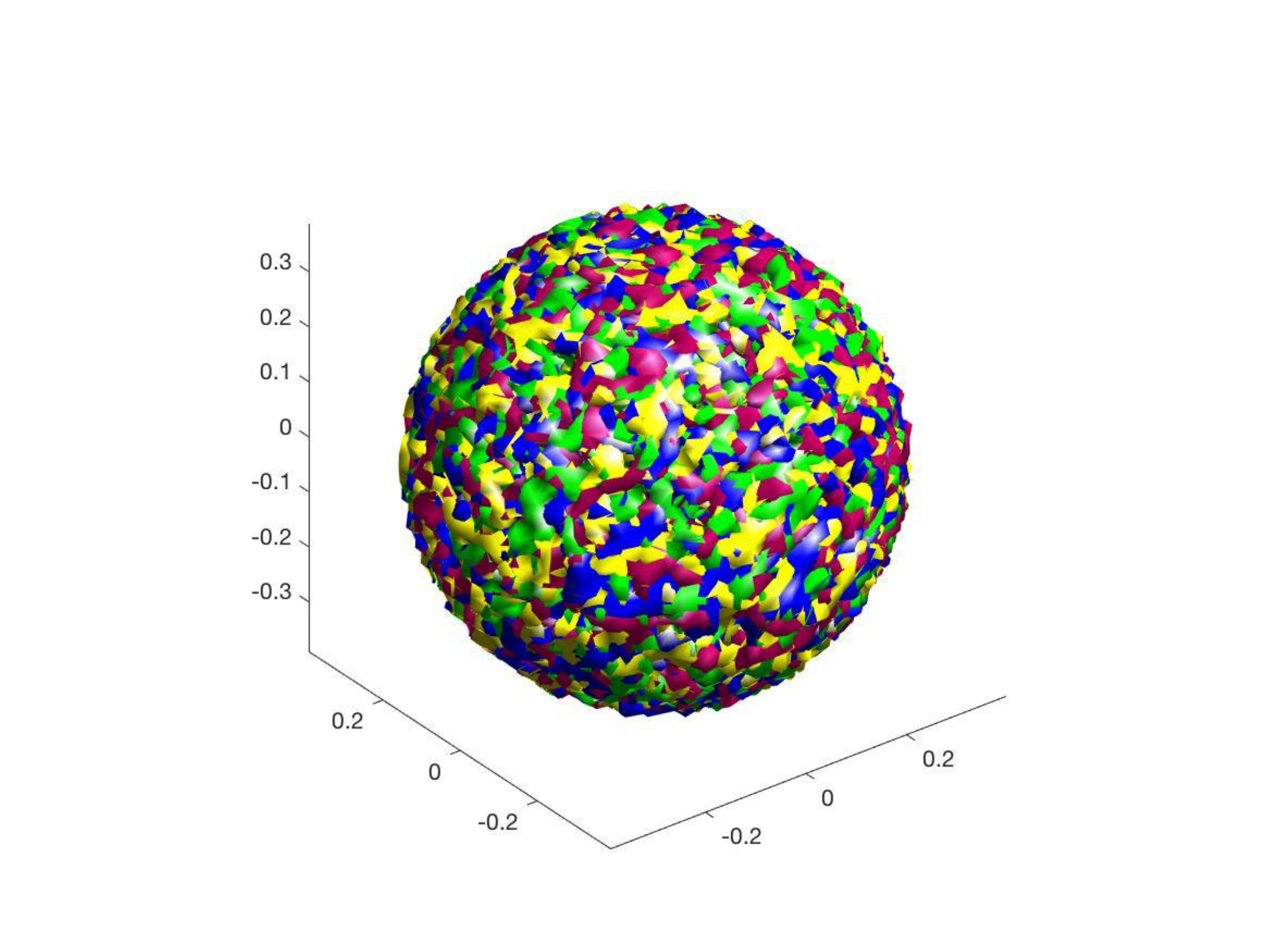}}
  \subfloat[Phases 5-8, $T=0$.]{\label{fig:Spin10_b0_3D}\includegraphics[width=0.33\textwidth]{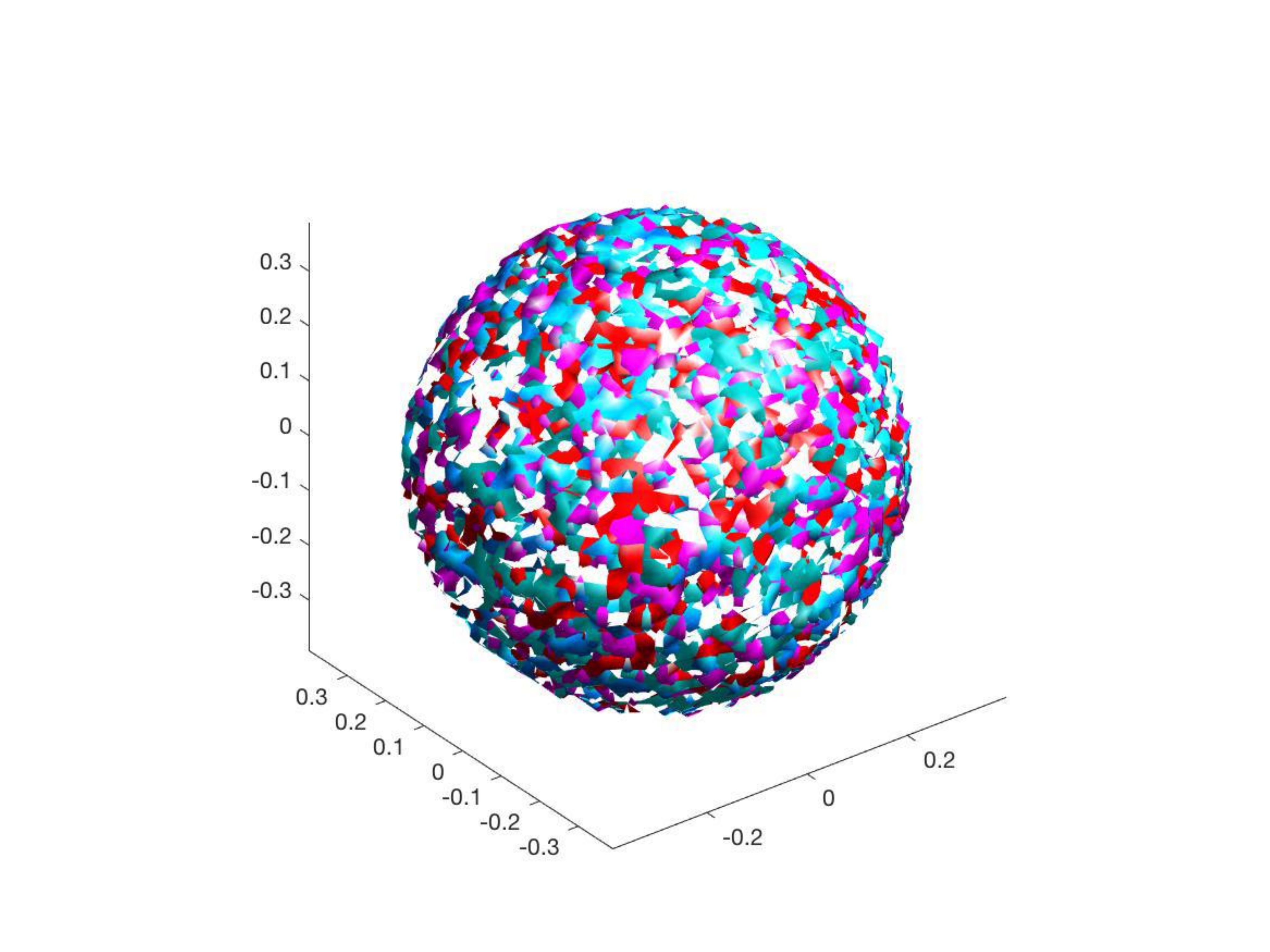}}\\
  \subfloat[Phases 1-4, $T=0.1$.]{\label{fig:Spin10_a1_3D}\includegraphics[width=0.33\textwidth]{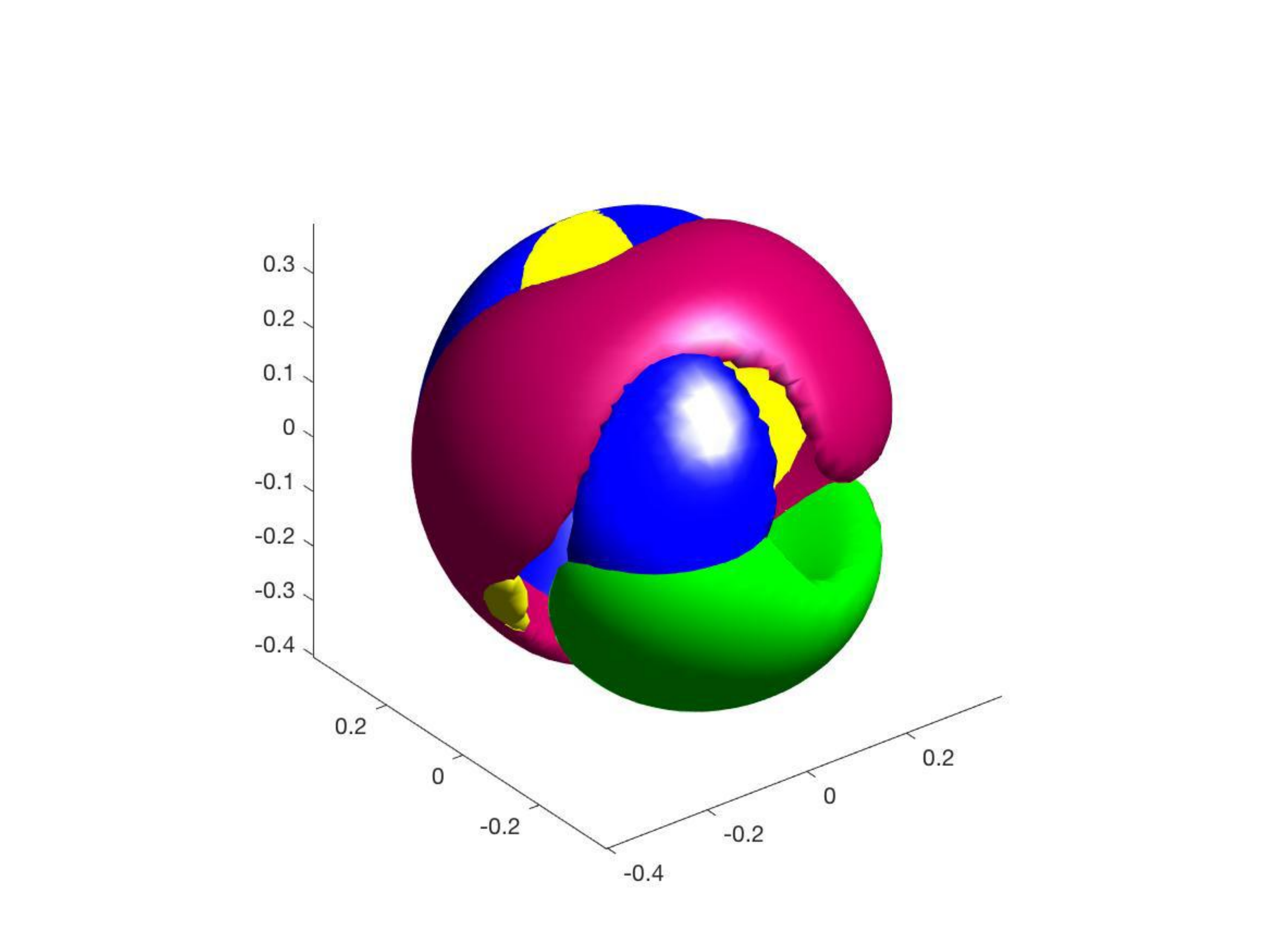}}
  \subfloat[Phases 1-4, $T=1$.]{\label{fig:Spin10_a2_3D}\includegraphics[width=0.33\textwidth]{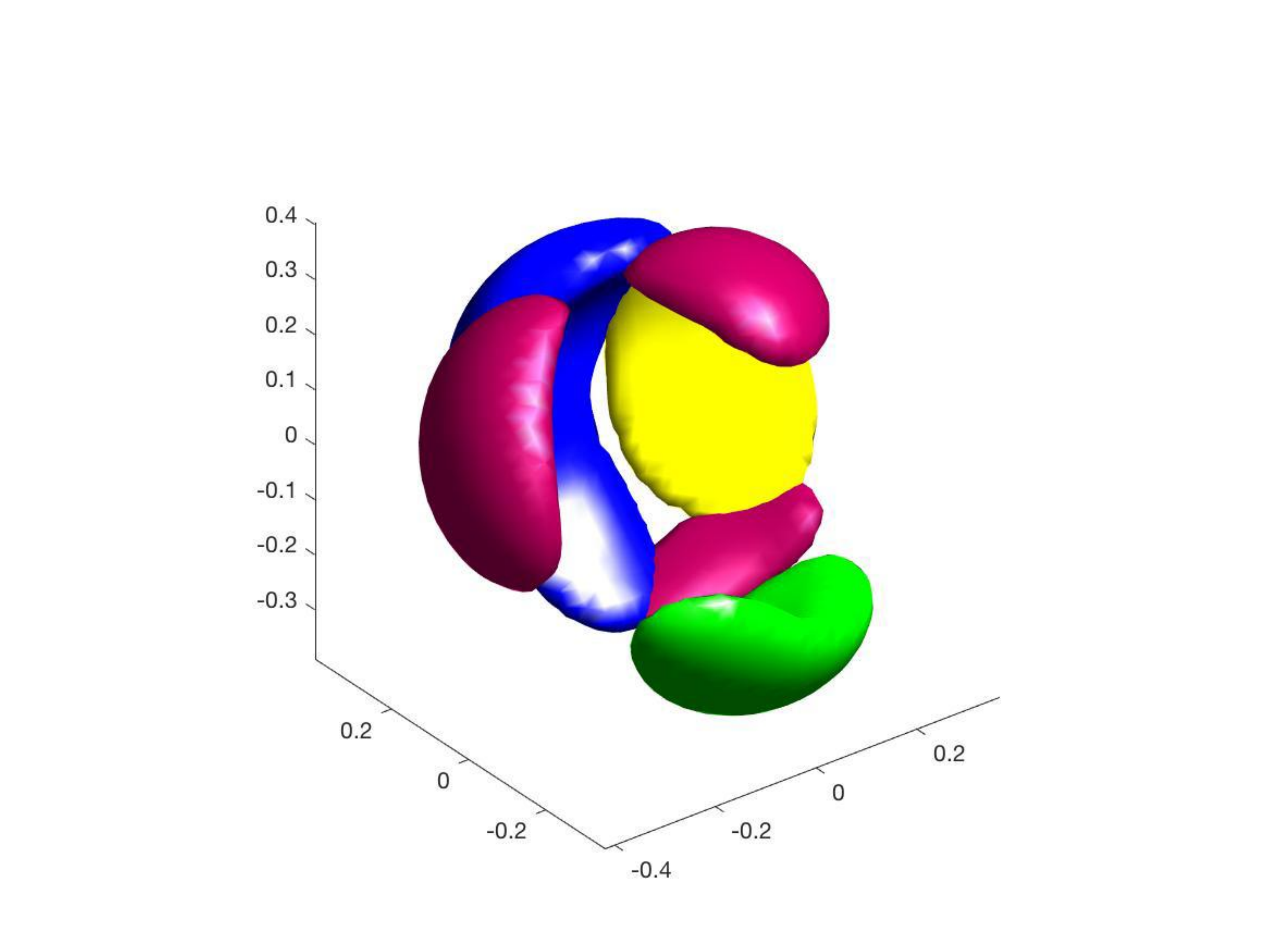}}
  \subfloat[Phases 1-4, $T=100$.]{\label{fig:Spin10_a3_3D}\includegraphics[width=0.33\textwidth]{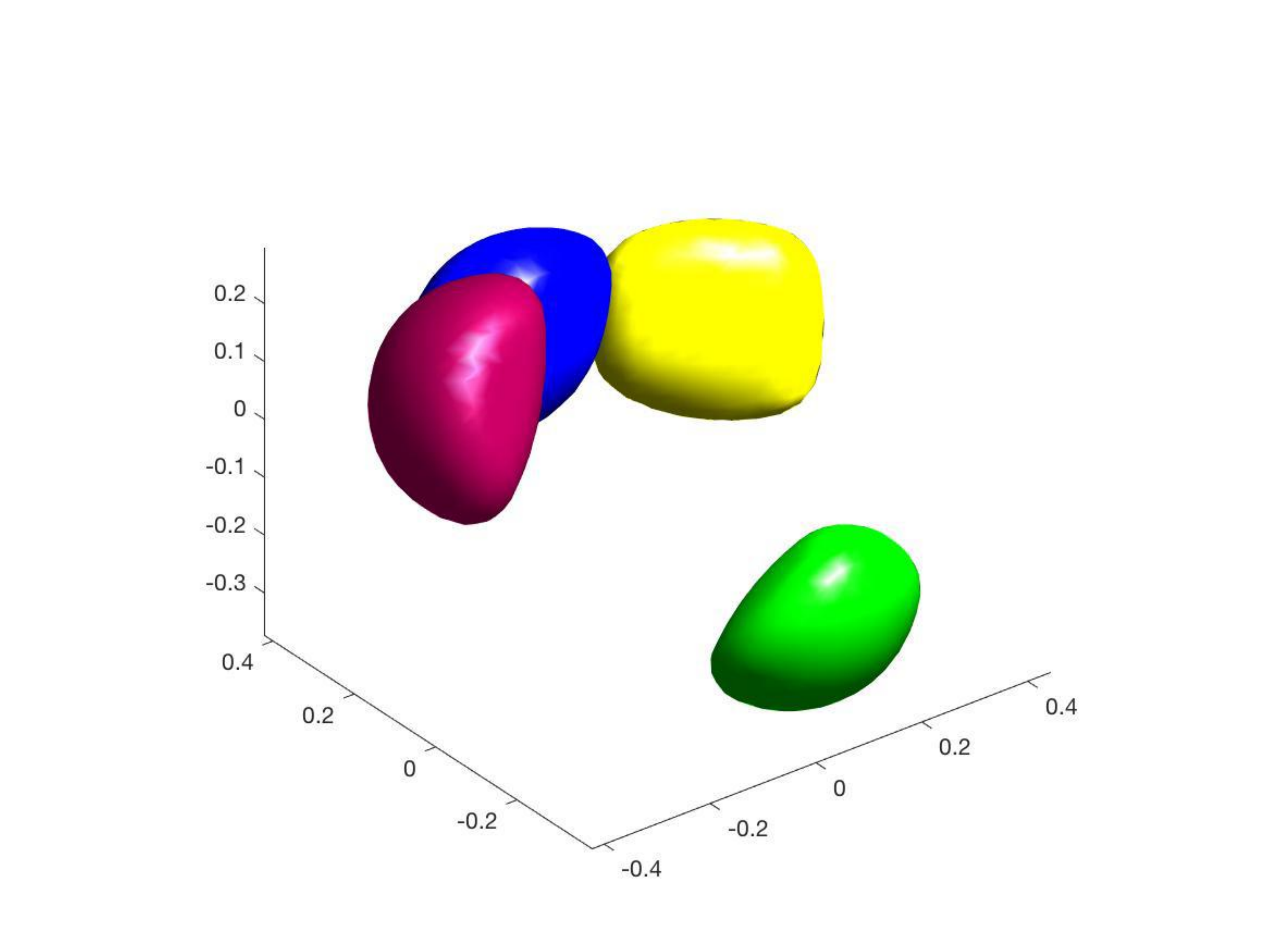}} \\
   \subfloat[Phases 5-8, $T=0.1$.]{\label{fig:Spin10_b1_3D}\includegraphics[width=0.33\textwidth]{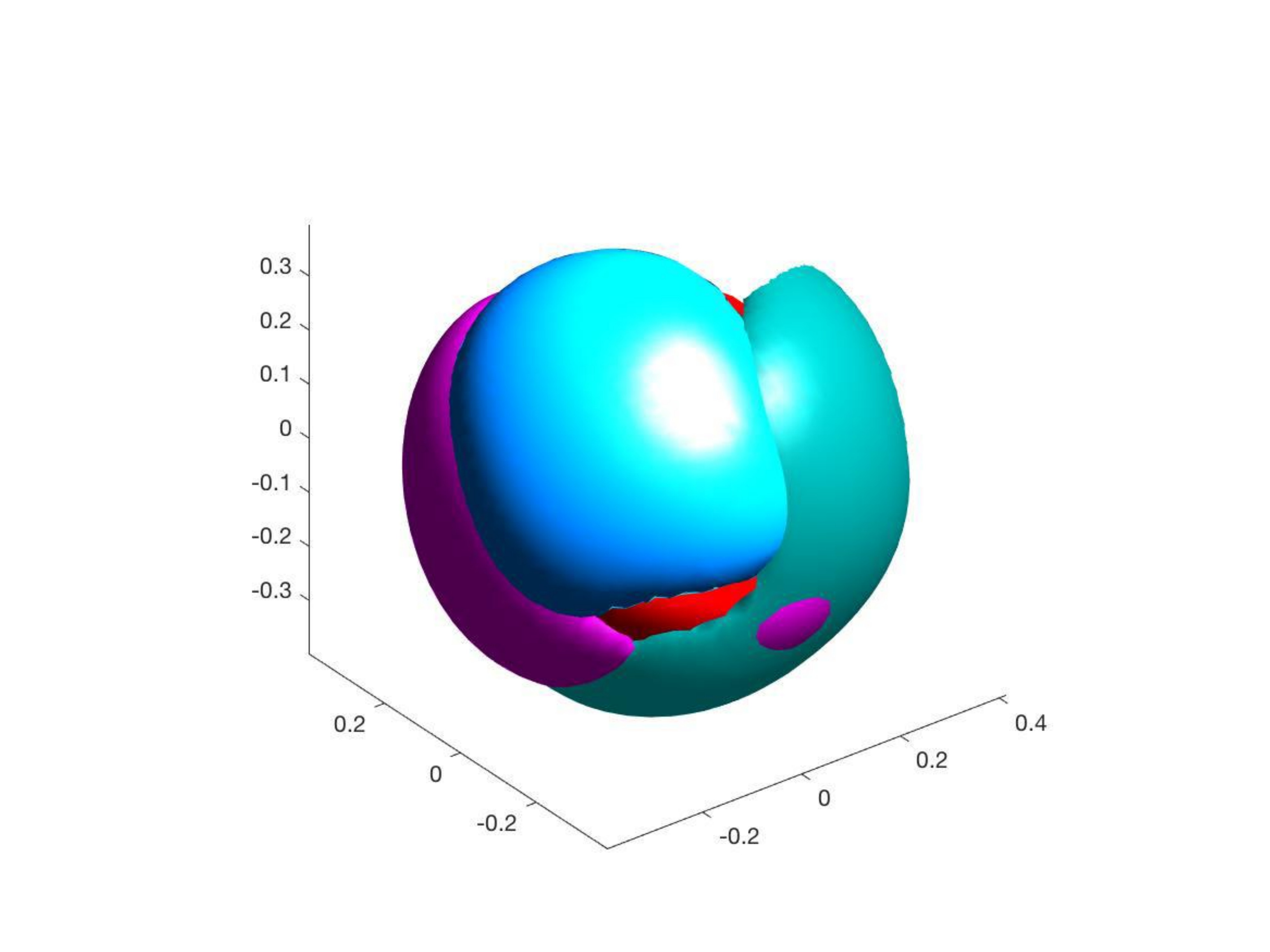}}
  \subfloat[Phases 5-8, $T=1$.]{\label{fig:Spin10_b2_3D}\includegraphics[width=0.33\textwidth]{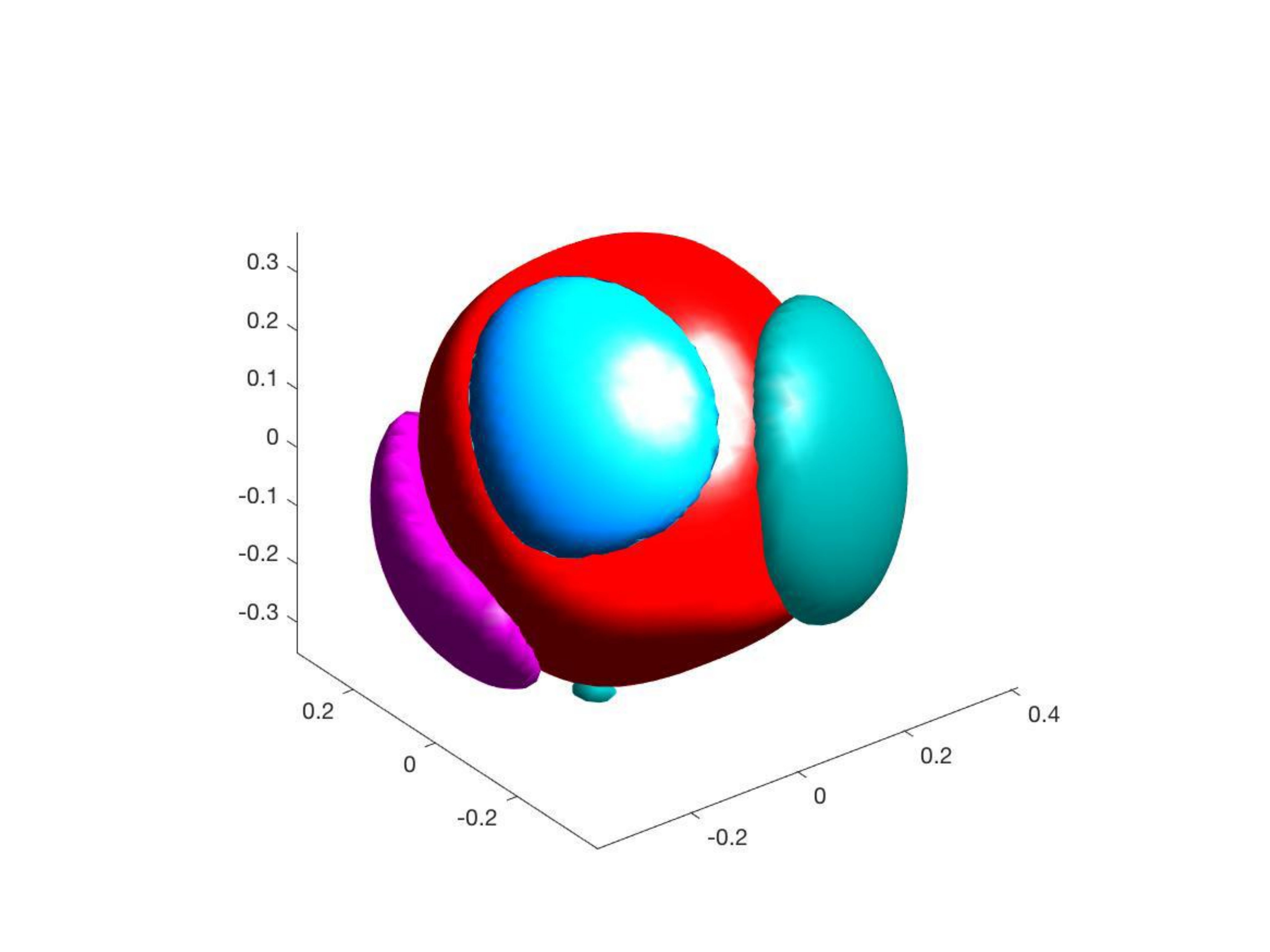}}
  \subfloat[Phases 5-8, $T=100$.]{\label{fig:Spin10_b3_3D}\includegraphics[width=0.33\textwidth]{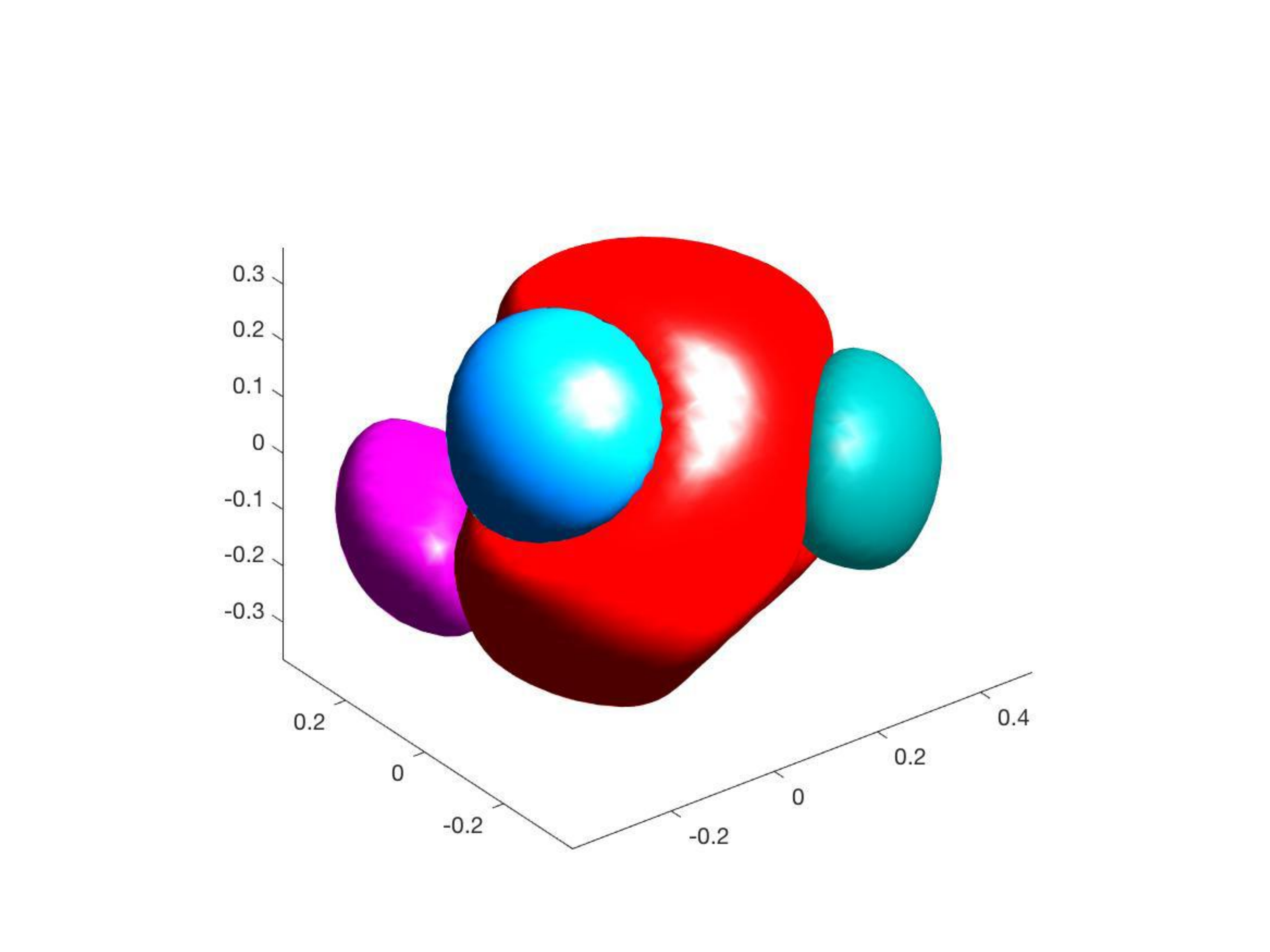}}
\caption{Initial condition is a central sphere containing eight well mixed phases, surrounded by a ninth pure phase.}
  \label{fig: 9 phase spin 3D}
\end{figure}
\begin{figure}[h]
  \centering
  \includegraphics[width=0.75\textwidth]{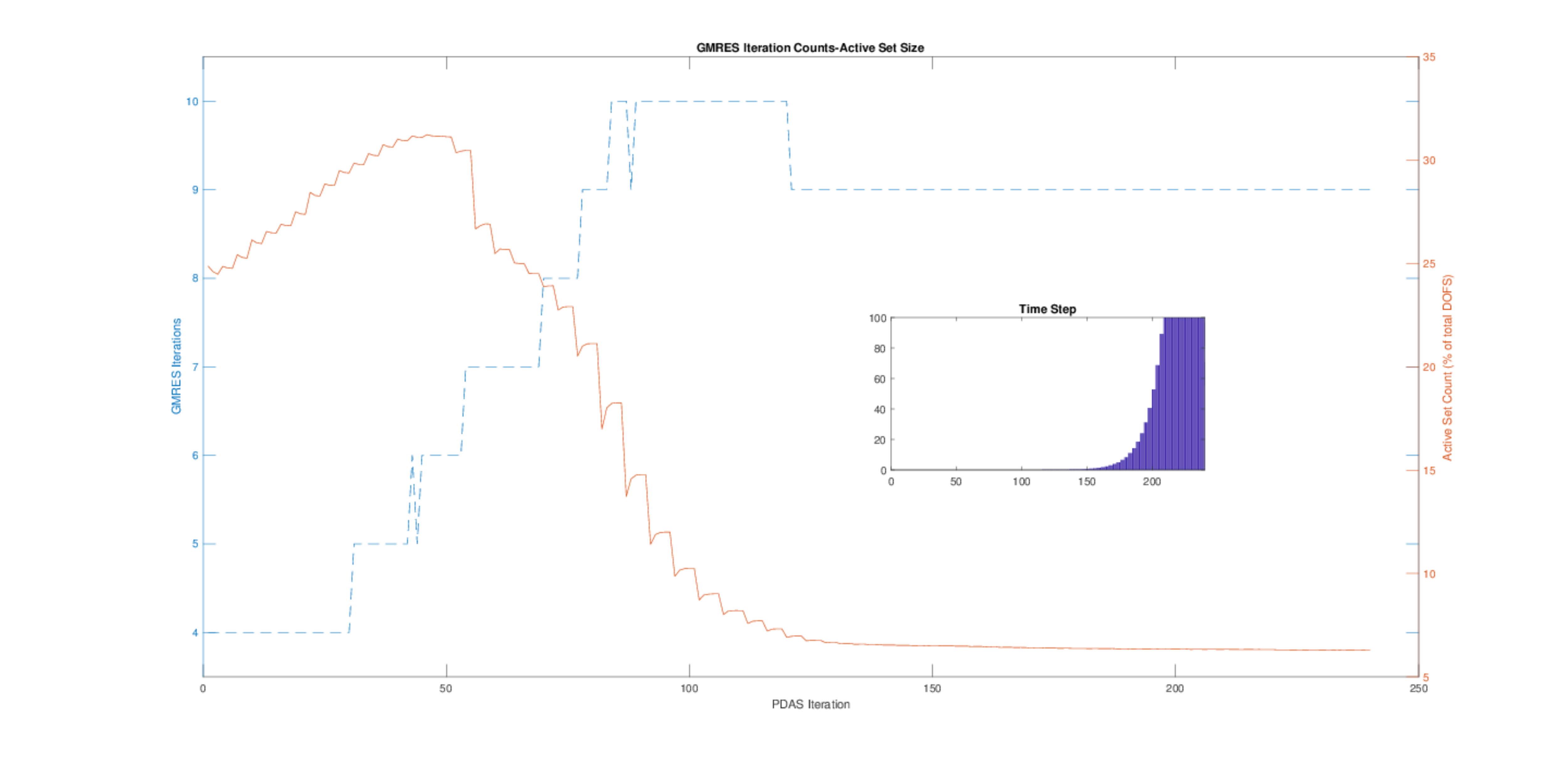}
  \caption{GMRES iteration counts using the preconditioner, ${P}_{3,AMG}$. Initial condition is a well mixed initial condition of nine phases in three space dimensions. (DOFS) denote the degree of freedom of each order parameter.}
  \label{fig: 9 phase spin 3D GMRES}
\end{figure}
Figure \ref{fig: 9 phase spin 3D GMRES} shows the iteration counts and active set size for each time step.

\subsubsection{Quadruple Junction to Triple Junction}

Finally, we consider a three dimensional problem analogous to the two dimensional quadruple junction problem. This consists of four bulk phases surrounded by a fifth phase, see Figure \ref{fig: 5 phase bulk 3D}. The initial mesh has over half a million nodes and $\varepsilon = 0.04$.  We see the evolution into bulk regions with spherical like minimal surfaces in contact with the fifth phase. Moreover, the central region shifts, in a similar way to the two dimensional problem, to remove any quadruple junctions. The final plot in Figure \ref{fig: 5 phase bulk 3D} shows the desired iteration counts for the solver. 
\begin{figure}[h]
  \centering
  \subfloat[$T=0$]{\label{fig:Bulk5_a_3D}\includegraphics[width=0.33\textwidth]{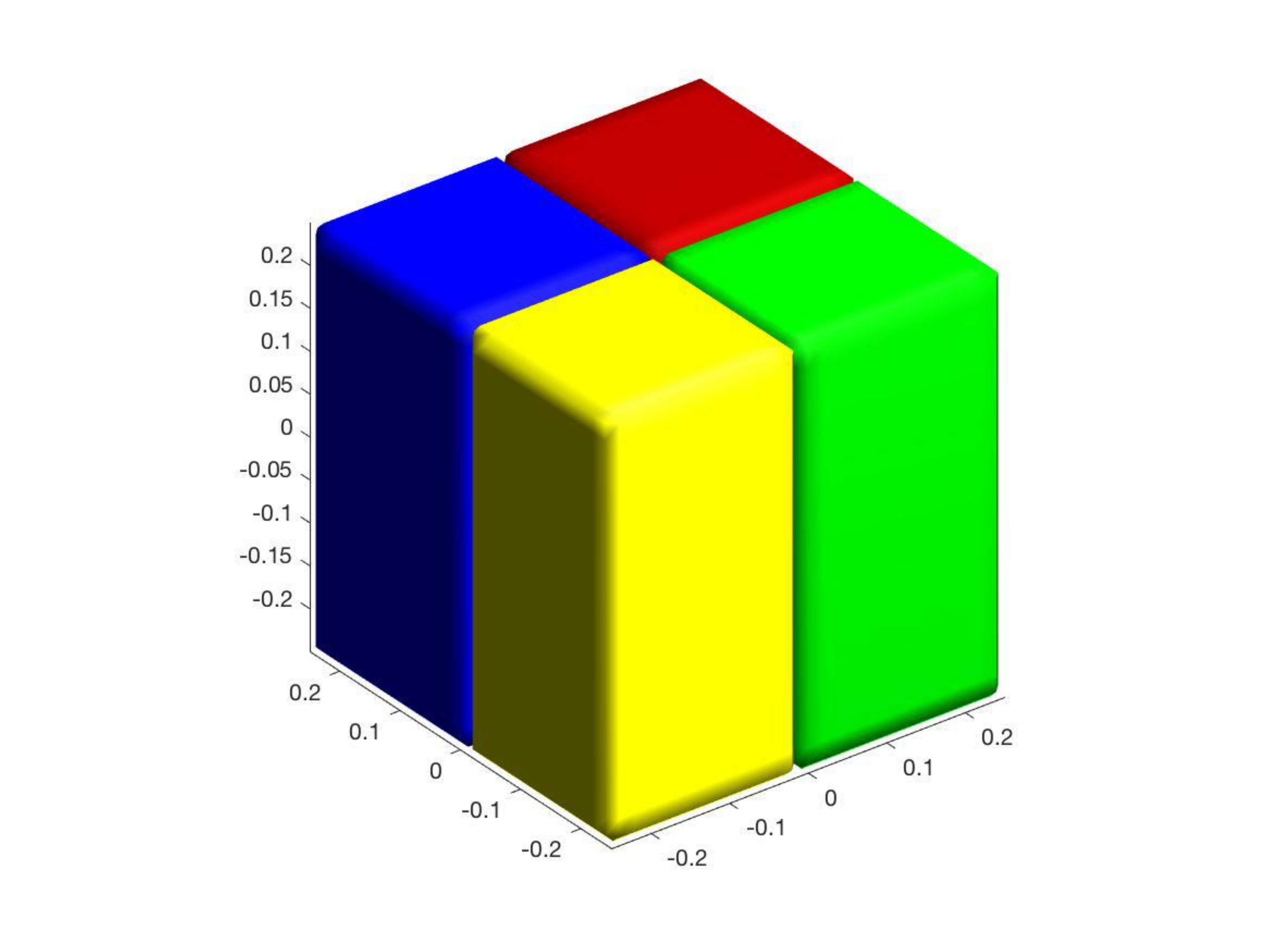}}
  \subfloat[$T=0.001$]{\label{fig:Bulk5_b_3D}\includegraphics[width=0.33\textwidth]{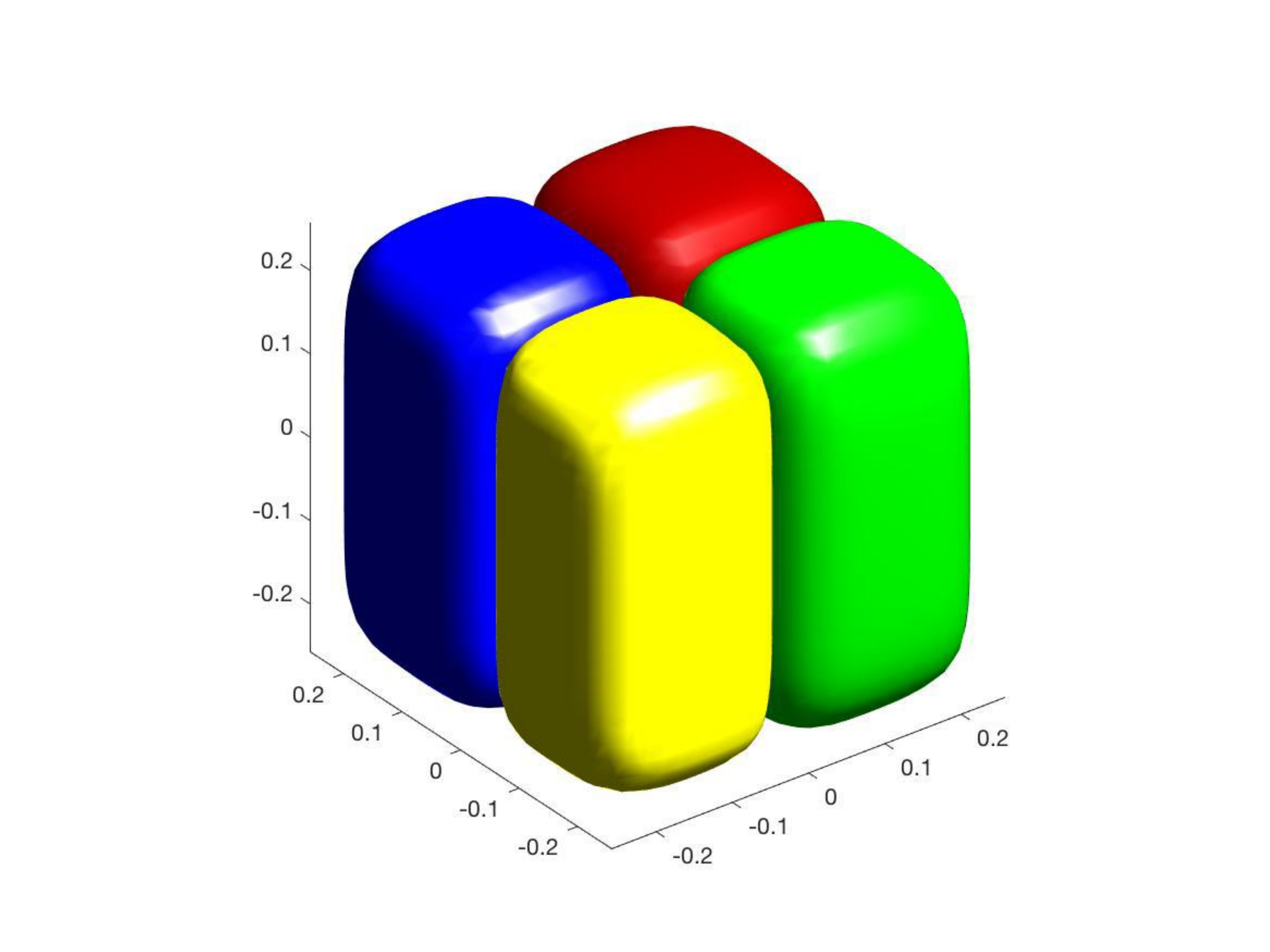}}
  \subfloat[$T=1$]{\label{fig:Bulk5_c_3D}\includegraphics[width=0.33\textwidth]{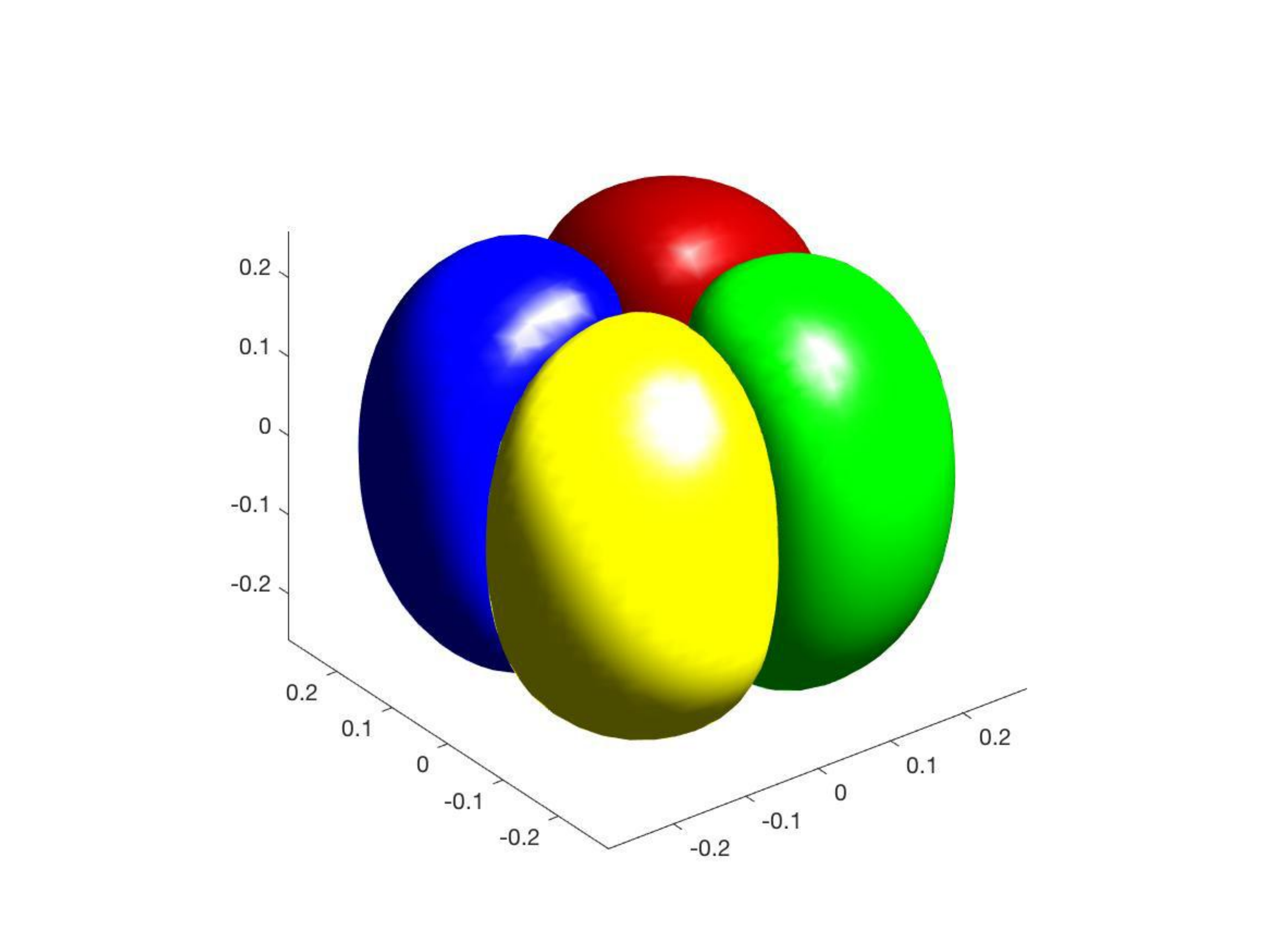}} \\
  \subfloat[$T=100$]{\label{fig:Bulk5_d_3D}\includegraphics[width=0.33\textwidth]{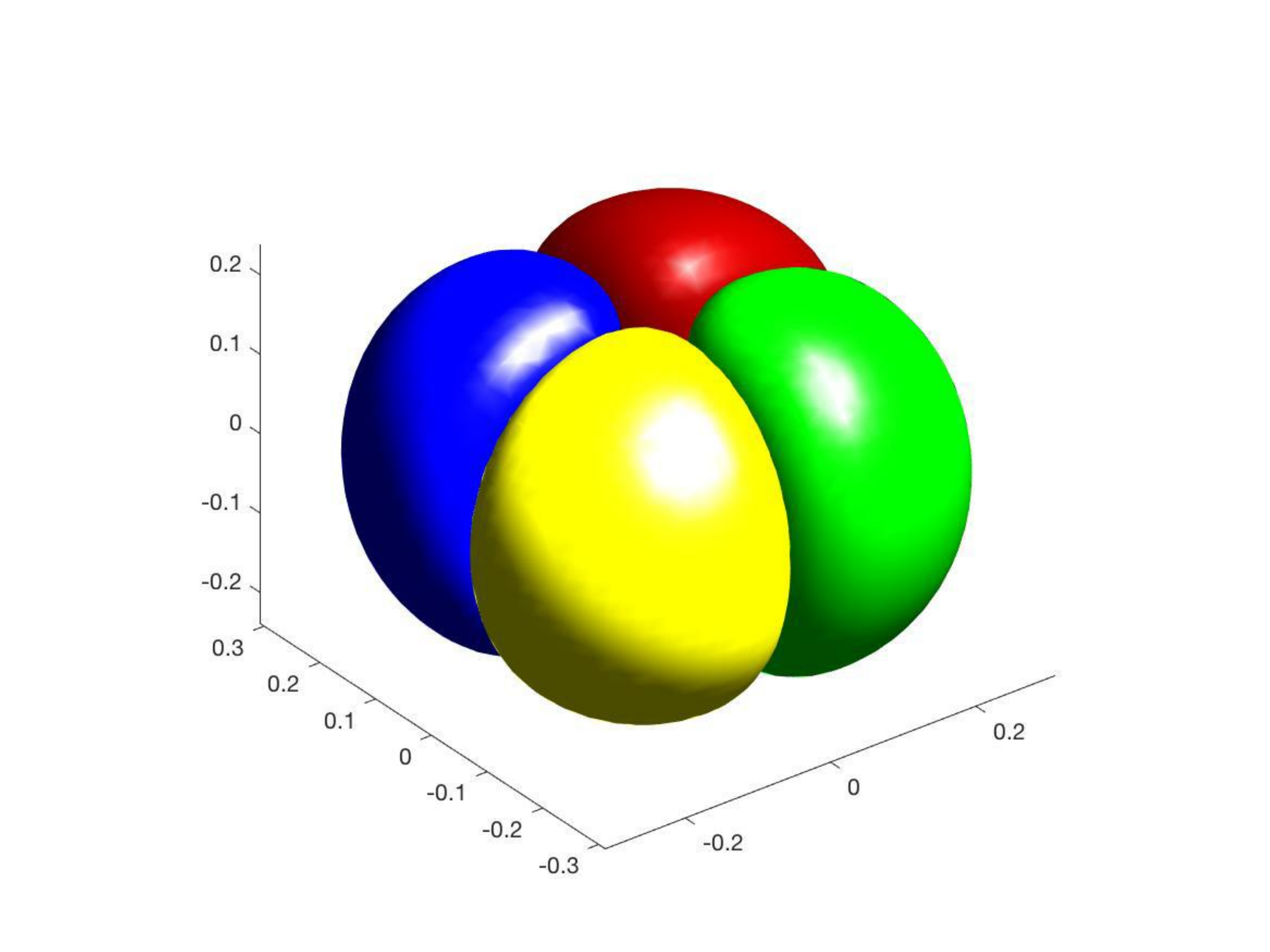}}
  \subfloat[$T=5000$]{\label{fig:Bulk5_e_3D}\includegraphics[width=0.33\textwidth]{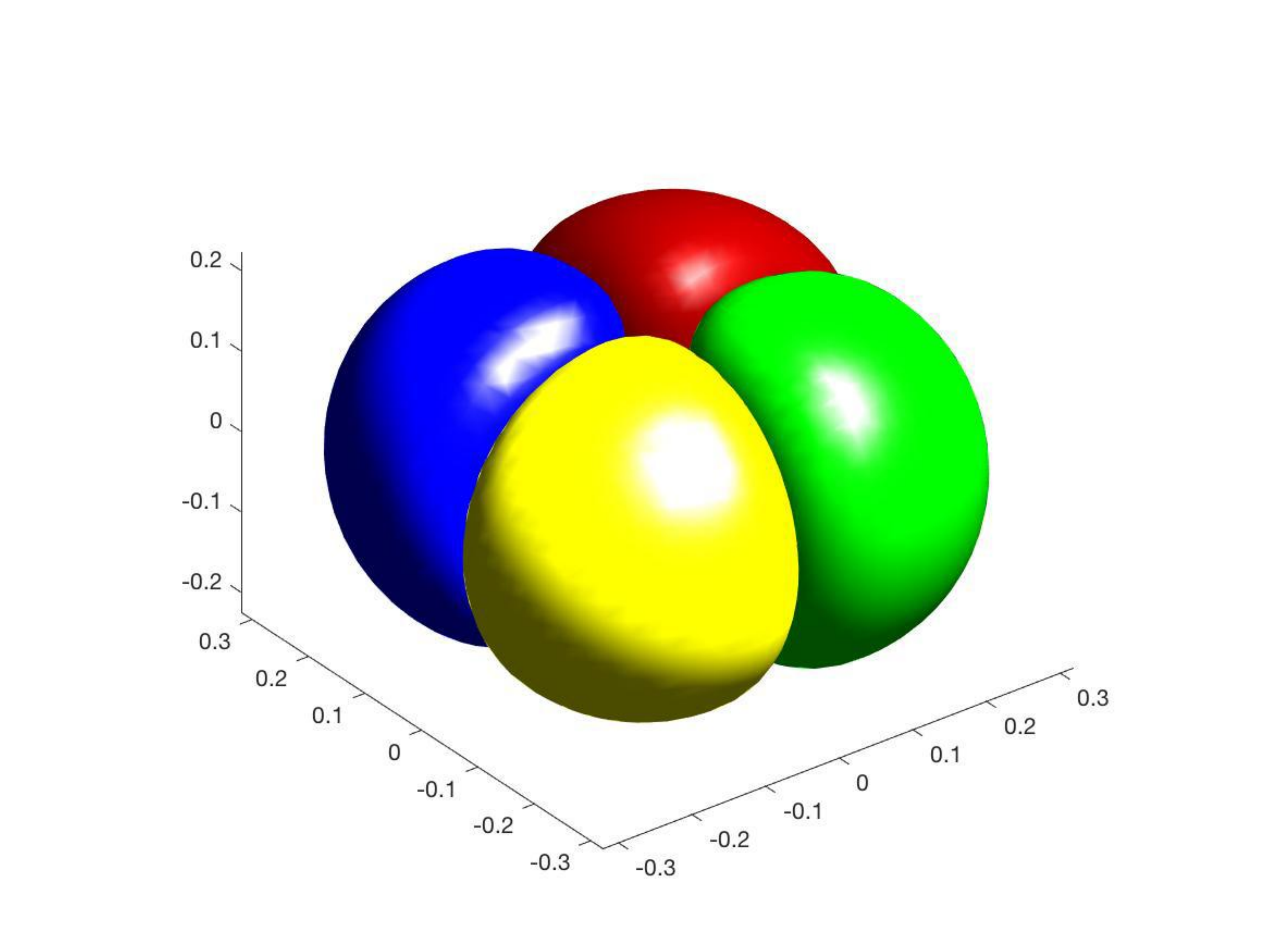}}
  \subfloat[GMRES Iterations for $P_{3,AMG}$ and active set size]{\label{fig:Bulk5_f_3D}\includegraphics[width=0.33\textwidth]{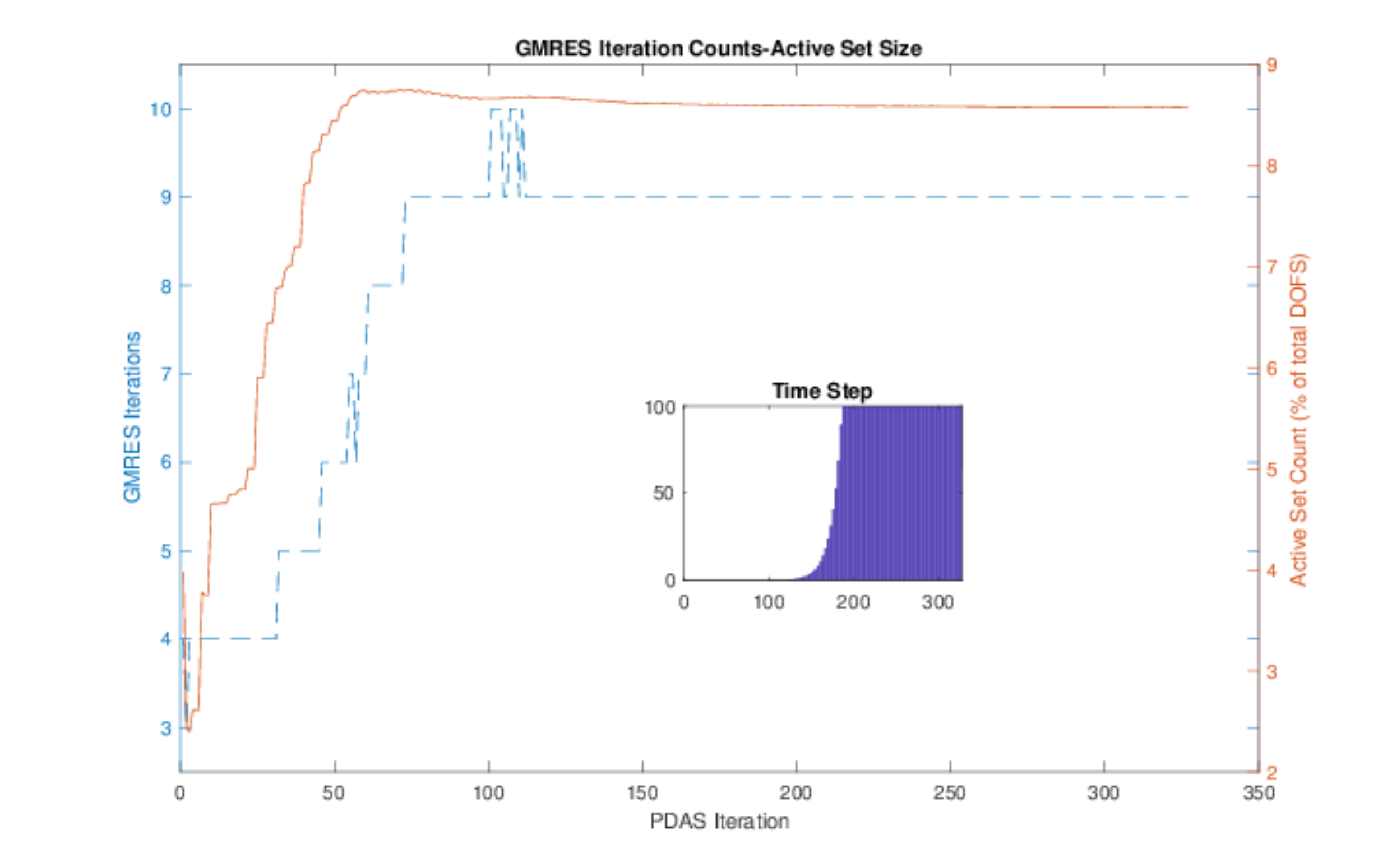}}
\caption{Time evolution of four bulk phases surrounded by a fifth phase.}
  \label{fig: 5 phase bulk 3D}
\end{figure}

\section{Conclusions}

In this work we have presented a robust practical preconditioner for systems of multiphase Allen-Cahn variational inequalities. As mentioned earlier, see Remark \ref{tolerance remark}, the need for a reliable and efficient solver is crucial when using iterative methods to solve the linear systems arising in the PDAS algorithm, where solve tolerances have to be small. Firstly, when exactly solving the matrices in the preconditioning system, in the case of two phases, it was shown, in Theorem \ref{theorem}, that GMRES will converge within three iterations. Secondly, in the case of multiple phases, it was shown experimentally, that the use of the precondioner $P_{3,AMG}$ leads to low GMRES iteration counts on fine meshes. Finally, given the standard blocks used in this solver, i.e., Multigrid, GMRES and simple smoothers, the proposed approach may immediately be applied in most of the software packages used to solve multiphase variational inequalities.  

\noindent {\bf Acknowledgements}\\
VS would like to thank the Isaac Newton Institute for Mathematical Sciences for support and hospitality during the programme {\it Geometry, compatibility and structure preservation in computational differential equations} when work on this paper was undertaken. 
This work was supported by: EPSRC grant number EP/R014604/1.  

%\bibliographystyle{siam}
%\bibliography{phase_refs}

\end{document}